\newcommand\cyr{%
\renewcommand\rmdefault{wncyr}%
\renewcommand\sfdefault{wncyss}%
\renewcommand\encodingdefault{OT2}%
\normalfont
\selectfont}
\DeclareTextFontCommand{\textcyr}{\cyr}
\DeclareFontFamily{OT1}{rsfs}{}
\DeclareFontShape{OT1}{rsfs}{n}{it}{<-> rsfs10}{}
\DeclareMathAlphabet{\mathscr}{OT1}{rsfs}{n}{it}
\numberwithin{equation}{section}
\newtheorem{theorem}{Theorem}[section]
\newtheorem{lemma}[theorem]{Lemma}
\newtheorem{proposition}[theorem]{Proposition}
\newtheorem{corollary}[theorem]{Corollary}
\newtheorem{question}{Question}
\theoremstyle{definition}
\newtheorem{definition}[theorem]{Definition}
\theoremstyle{remark}
\newtheorem{example}[theorem]{Example}
\newcommand{\Ass}{\operatorname{Ass}}
\newcommand{\Assh}{\operatorname{Assh}}
\newcommand{\Hom}{\operatorname{Hom}}
\newcommand{\Ann}{\operatorname{Ann}}
\newcommand{\depth}{\operatorname{depth}}
\newcommand{\syz}{\operatorname{syz}}
\newcommand{\N}{\mathbb N}
\newcommand{\Z}{\mathbb Z}
\newcommand{\noi}{\mbox{i-effaceable}}
\begin{document}
\title[Finite length local cohomology and bounds on Koszul cohomology]{Characterizing finite length local cohomology in terms of bounds on Koszul cohomology}

\author[Patricia Klein]{Patricia Klein}
\address{Department of Mathematics, University of Kentucky, Lexington, KY 40506 USA}
\email{pklein@uky.edu}

\maketitle

\section{Abstract}
Let $(R,m, \kappa)$ be a local ring.  We give a characterization of $R$-modules $M$ whose local cohomology is finite length up to some index in terms of asymptotic vanishing of Koszul cohomology on parameter ideals up to the same index.  In particular, we show that a quasi-unmixed module $M$ is asymptotically Cohen--Macaulay if and only if $M$ is  Cohen--Macaulay on the punctured spectrum if and only if $\sup\{\ell(H^i(f_1, \ldots, f_d;M))\mid \sqrt{f_1, \ldots, f_d} = m \mbox{, } i< d\}<\infty$ for $d = \dim(M) = \dim(R)$.

\section{Introduction} 

Throughout this paper, all rings are assumed to be commutative rings with unity, and all modules are assumed to be unital.  We will use $(R, m, \kappa)$ to denote a local ring, by which we mean a Noetherian ring with unique maximal ideal $m$ and residue field $\kappa$.  The goal of this paper is to give precise conditions under which lengths of Koszul cohomology of a finitely-generated  $R$-module $M$ on all systems of parameters on $M$ are bounded.  These conditions will be whether or not certain local cohomology of $M$ on the maximal ideal have finite length.  Looking to bound lengths of Koszul cohomology on systems of parameters in this way is natural in light of theorems of Lech's, described below, together with their recent strengthings, and also timely in light of work on similar asymptotics, also described below.

Recently, there has been a great deal of interest in the asymptotics of Koszul cohomology.  Bhatt, Hochster, and Ma have recently shown that several results, including positivity of Serre intersection multiplicities, that would follow from the existence of small Cohen--Macaulay modules over complete local domains would also follow from the existence of \emph{lim Cohen--Macaulay sequences} of modules, i.e. sequences of modules whose Koszul cohomology on a fixed system of parameters is eventually ``small enough" in a sense we will not make precise here \cite{BHM18, MelExp}.  In this vein, one may wonder if there are other environments in which Koszul cohomology vanishing may be replaced as a condition by Koszul cohomology being asymptotically small.  Some theorems of Lech's, together with more recent work in their wake, point to Koszul cohomology we would reasonably expect to be asymptotically small in this way.

Lech's inequality states that for any local ring $(R,m, \kappa)$ of dimension $d$ and any $m$-primary ideal $I$, $\dfrac{e_I(R)}{\ell(R/I)} \leq d! \cdot e_m(R)$ \cite[Theorem 3]{Lech60}.  Lech's inequality is an invaluable tool in many areas of commutative algebra, such as in the study of the minimal number of generators of ideals \cite{Bor79}, of first Hilbert coefficients \cite{PUV05}, and of reduction numbers \cite{Vas03}.  There has additionally been recent work to improve Lech's inequality, most notably in \cite{Hanes} and \cite{HSV17}.  Most recently, Lech's inequality was generalized to the case of finitely generated modules in \cite{joint}.  In the same paper, it is shown that there also exists a nonzero lower bound depending only on $M$ for the ratio $\dfrac{e_I(M)}{\ell(M/IM)}$ for all $m$-primary ideals $I$ whenever $M$ is a finitely generated quasi-unmixed $R$-module, which is to say that the St\"{u}ckrad--Vogel Conjecture is settled in the affirmative.  Furthermore, the authors show that $\frac{\ell(H^i(x_1^t, \ldots, x_d^t; M))}{\ell(M/(x_1^t, \ldots, x_d^t)M)}$ converges to $0$ \emph{uniformly} over all systems of parameters $x_1, \ldots, x_d$ for every $0 \leq i < \dim(M)$ whenever $M$ is quasi-unmixed, where $H^i(x_1, \ldots, x_d; M)$ denotes the $i^{th}$ Koszul cohomology module of the elements $x_1, \ldots, x_d$ on the module $M$.  Returning to classical work due to Lech, the Lech's limit formula \cite[Theorem 2]{Lech60}, generalized can be generalized to modules (see \cite{Northcott}), states that \[
\displaystyle\lim_{min_i\{n_i \}\rightarrow \infty} \dfrac{e_{(x_1^{n_1}, \ldots, x_d^{n_d})}(M)}{\ell(M/(x_1^{n_1}, \ldots, x_d^{n_d})M)} = 1.
\]  

In light of these formulas, a natural question is whether Lech's limit formula holds when the sequence of parameter ideals given by powers of a fixed system of parameters is replaced by any sequence of parameter ideals in increasingly high powers of the maximal ideal.  Example \ref{badmultiplicity} demonstrates that it does not in general.  Moreover, Theorem \ref{ACMLCM} gives precise conditions on when $\displaystyle \lim_{n \to\infty} \dfrac{\ell(H^i(x_{1n}, \ldots, x_{dn},M))}{\ell(R/I_n)} = 0$ for all sequences of parameter ideals $(x_{1n}, \ldots, x_{dn}) = I_n \subseteq m^n$ and all $i <\dim(M) = \dim(R)$.  If $\Assh(M) = \Assh(R)$, these same conditions also give $\displaystyle \lim_{n \to\infty} \dfrac{\ell(H^i(x_{1n}, \ldots, x_{dn},M))}{\ell(M/I_nM)} = 0$ for all sequences of parameter ideals $(x_{1n}, \ldots, x_{dn}) = I_n \subseteq m^n$ and all $i <\dim(M)$, which directly implies that $\displaystyle \lim_{n \to\infty} \dfrac{e_{I_n}(M)}{\ell(M/I_nM)} = 1$ by Serre's formula \cite{Serre}, which states that \begin{equation}\label{SerreFormula}
e_I(R) = \displaystyle\sum_{i=0}^d (-1)^{i} \ell(H^{d-i}(x_1, \ldots, x_d;R)). 
\end{equation} 

For our convenience in studying these asymptotics, we make two definitions: 

\begin{definition} We say that $M$ is \emph{$\noi$} if for every sequence of parameter ideals $(x_{1n}, \ldots, x_{dn}) = I_n \subseteq m^n$, we have $\displaystyle \lim_{n \to\infty} \dfrac{\ell(H^i(x_{1n}, \ldots, x_{dn},M))}{\ell(R/I_nR)} = 0$.
\end{definition}

Define $\Assh(M) = \{P \in \Ass(M) \mid \dim(R/P) = \dim(M)\}$.  If $\Assh(M)  = \Assh(R)$, we may replace $\ell(R/I_nR)$ by $\ell(M/I_nM)$ in the definition above \cite[Lemma 4.4]{joint}.

\begin{definition} The \emph{asymptotic depth} of $M$, denoted asydepth $(M)$,  is $k$ if $M$ is $\noi$ for all $i<k$ and $M$ is not $k$-effaceable.  We say that $M$ is \emph{asymptotically Cohen--Macaulay} if asydepth $M = \dim M$.  
\end{definition}

We are now prepared to state the main theorem of this paper.

\begin{theorem}\label{main}
If $M$ is a finitely generated module over the local ring $(R,m,\kappa)$ of with $\dim(M) = \dim(R) = d$, then the following three conditions are equivalent for each $0 \leq k \leq d$: 
\begin{enumerate}
\item asydepth($M$) $\geq k$,
\item  $\sup\{\ell(H^i(x_1, \ldots, x_d;M))\mid \sqrt{f_1, \ldots, f_d} = m \mbox{, } i< k\}<\infty$,
\item $\ell(H^i_m(M)) < \infty$ for all $i<k$.
\end{enumerate} 
\end{theorem}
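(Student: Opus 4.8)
The plan is to prove the cycle of implications $(3)\Rightarrow(2)\Rightarrow(1)\Rightarrow(3)$, treating the indices $i<k$ one at a time and using an induction on $k$ so that at each stage we may assume the module already has good behavior in degrees below $i$. The implication $(2)\Rightarrow(1)$ is essentially formal from the definitions: if the lengths $\ell(H^i(f_1,\dots,f_d;M))$ are bounded by some constant $C$ uniformly over all parameter ideals $(f_1,\dots,f_d)$ and all $i<k$, then in particular for a sequence $I_n=(x_{1n},\dots,x_{dn})\subseteq m^n$ we have $\ell(H^i(x_{1n},\dots,x_{dn};M))\le C$ while $\ell(R/I_n)\to\infty$ (since $I_n\subseteq m^n$ forces the colength to grow), so the ratio goes to $0$; hence $M$ is $\noi$ for each $i<k$, i.e. asydepth$(M)\ge k$. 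The only thing to check carefully here is that $\ell(R/I_n)\to\infty$, which follows because $\ell(R/m^n)\to\infty$ in a local ring of positive dimension (and the statement is vacuous if $\dim R=0$).

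For $(3)\Rightarrow(2)$, the idea is to compute Koszul cohomology of a system of parameters $f_1,\dots,f_d$ via the standard spectral sequence (or the iterated long exact sequences coming from the Koszul complex built one element at a time) relating $H^\bullet(f_1,\dots,f_d;M)$ to the local cohomology modules $H^j_m(M)$. Concretely, there is a change-of-rings/convergence statement: since $\sqrt{(f_1,\dots,f_d)}=m$, the Koszul cohomology $H^i(f_1,\dots,f_d;M)$ is built out of subquotients of $H^j_m(M)$ for $j\le i$, together with pieces coming from the top local cohomology $H^d_m(M)$ which need not be finite length. For $i<k$ all the relevant $H^j_m(M)$ with $j\le i<k$ are finite length by hypothesis, so one gets a bound on $\ell(H^i(f_1,\dots,f_d;M))$ in terms of $\sum_{j\le i}\binom{d}{?}\ell(H^j_m(M))$ — a bound that is visibly independent of the choice of parameters. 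The cleanest way to make this precise is to use that for a finite-length module $N$, $H^i(f_1,\dots,f_d;N)$ has length at most $\binom{d}{i}\ell(N)$, combined with an induction peeling off $H^0_m(M)$: replace $M$ by $M/H^0_m(M)$, which changes low-degree Koszul cohomology only by finite-length (hence boundedly) amounts, and is done so the depth increases.

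The implication $(1)\Rightarrow(3)$ is the heart of the matter and the step I expect to be the main obstacle, since it asserts that a purely asymptotic, ratio-to-zero hypothesis forces an honest finiteness statement. The plan is to induct on $k$; the base case $k\le 1$ (where $(3)$ says $H^0_m(M)$ has finite length, which is automatic since $H^0_m(M)$ is finitely generated and $m$-torsion) is trivial, so assume the result through $k-1$, giving $\ell(H^j_m(M))<\infty$ for $j<k-1$ and in particular we may mod out by $H^0_m(M)$ to assume $\depth M\ge 1$ and localize/quotient to reduce the problem to showing $\ell(H^{k-1}_m(M))<\infty$. Suppose for contradiction $\ell(H^{k-1}_m(M))=\infty$; then $H^{k-1}_m(M)$ is not finitely generated (it is always Artinian, so infinite length means infinitely generated), equivalently there is a prime $\fp\ne m$ with $\dim R/\fp\ge ?$ in its support — more precisely $\Supp H^{k-1}_m(M)$ meets the punctured spectrum, so there is $\fp$ with $H^{k-1-\dim R/\fp}_{\fp R_\fp}(M_\fp)\ne 0$ by a local-cohomology support/shifting argument. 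The task is then to manufacture, from such a $\fp$, a sequence of parameter ideals $I_n\subseteq m^n$ on which $\ell(H^{k-1}(x_{1n},\dots,x_{dn};M))$ grows at least as fast as $\ell(R/I_n)$ (or at least does not go to zero relative to it), contradicting $(1)$. The mechanism should be: choose parameters $f_1,\dots,f_{d-\dim R/\fp}$ out of $\fp$ together with parameters of $R/\fp$ in high powers of $m$, so that the "local" part of Koszul cohomology at $\fp$ contributes a non-finite-length, hence (after the high powers) large, amount to $H^{k-1}$ of the full parameter system; controlling this requires a careful comparison, via the Koszul-cohomology base change and the associativity/additivity formulas (and Serre's formula \eqref{SerreFormula}) used in \cite{joint}, between $\ell(H^{k-1}(\underline f;M))$ and $\ell\big(H^{k-1-\dim R/\fp}(\ldots;M_\fp)\big)\cdot\ell(R/(\text{parameters of }R/\fp))$. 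Making that growth estimate uniform — so that the ratio provably fails to vanish — is the delicate point, and I would expect to need the quasi-unmixedness type hypotheses on $M$ (as already invoked in the introduction and in \cite[Lemma 4.4]{joint}) to rule out pathological length cancellation, exactly as in the proof that asymptotic depth is well defined.
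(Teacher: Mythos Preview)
Your implications $(2)\Rightarrow(1)$ and $(3)\Rightarrow(2)$ are essentially correct and match the paper's approach (the paper uses a spectral sequence coming from the dualizing complex and Matlis duals $H^j_m(M)^\vee$ rather than the $H^j_m(M)$ themselves, but the bound you describe is the one obtained). The real issue is $(1)\Rightarrow(3)$, where your outline has two genuine gaps.

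First, a technical error: you write that if $\ell(H^{k-1}_m(M))=\infty$ then ``$\Supp H^{k-1}_m(M)$ meets the punctured spectrum.'' This is false: local cohomology $H^j_m(M)$ is always $m$-torsion, so its support is contained in $\{m\}$. What one actually uses is that the \emph{Matlis dual} $H^{k-1}_m(M)^\vee$ is a finitely generated module of positive dimension when $\ell(H^{k-1}_m(M))=\infty$, and hence surjects onto some $R/P$ with $\dim(R/P)>0$. The paper gets there by first taking syzygies to reduce to $k=d-1$ and then reading off from the spectral sequence that controlling $\ell(H^{d-1}(I;M))$ amounts to controlling $\ell(H^{d-1}_m(M)^\vee/I\,H^{d-1}_m(M)^\vee)$, hence $\ell(R/(I+P))$.

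Second, and more seriously, your proposed construction of a witnessing sequence of parameter ideals does not work. You suggest taking $f_1,\dots,f_{d-\dim R/\fp}\in\fp$ together with parameters of $R/\fp$ ``in high powers of $m$.'' But any such natural choice (e.g.\ raising a fixed system of parameters to high powers) runs directly into Lech's limit formula: for $I_n=(x_1^n,\dots,x_d^n)$ one has $\ell(R/I_n)\sim n^d$ while $\ell(R/(I_n+P))\sim n^{\dim R/P}$, so the ratio goes to $0$, which is exactly the opposite of what you need. The whole point, and the technical heart of the paper, is that one must construct very specific, carefully entangled parameter ideals $I_n\subseteq m^n$ for which $\ell(R/(I_n+P))/\ell(R/I_n)$ stays bounded away from $0$ even though $\dim(R/P)<d$. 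The paper does this in Section~6 (``Quotients of surprisingly large length''), with explicit ideals such as $(z^{n^4}-z^nx^n,\,y^n-z^nx,\,x^{n+1}-xz^{n^4-n}+yz^n)$ in $\kappa[[x,y,z]]$, after first reducing to the case where $R$ is a complete regular local ring or a Gorenstein ring of the form $V[[x_1,\dots,x_d]]/(p^sx_1)$. Your proposal contains no mechanism for producing such ideals, and no argument for why the ratio should fail to vanish; your closing remark that quasi-unmixedness ``might be needed'' is also off, since the equivalence $(1)\Leftrightarrow(2)\Leftrightarrow(3)$ holds without any quasi-unmixedness hypothesis.
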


\noindent We will give two proofs of Theorem \ref{main}, one as Theorem \ref{ACMLCM} in Section 4 and one as Corollary \ref{forallk} in Section 5.  Recall that whenever $M$ is quasi-unmixed, i.e. if $\hat{M}$ is an equidimensional $\hat{R}$-module, condition (3) is equivalent to $\depth_{P}M_P \geq \mbox{height} (P)+k-d$ for all prime ideals $P \neq m$.  Note that when $k = d$, this is saying that $M$ is Cohen---Macaulay on the punctured spectrum.  It is surprising that requiring that certain Koszul cohomology modules grow slowly, as in the definition of aymptotic depth, actually forces the lengths of all Koszul cohomology modules on all systems of parameters to share a global bound.  That is to say that condition 2, which requires an absolute bound on the lengths of Koszul homology modules, is prima facie much stronger than condition 1, which merely requires that these lengths grow somewhat slowly.  

One key ingredient in the major proofs of this paper is the result that one can find prime ideals $P$ and sequences of parameter ideals $I_n$ such that $\ell(R/I_n)$ and $\ell(R/(I_n+P))$ have approximately the same length even when $\dim(R/P) = 1$.  The precise statements of these surprising results, together with their proofs, which are computational in nature, can be found in Section 6.  We give one example of such a sequence of parameter ideals here.

\begin{example}
Let $R = \kappa[[x,y,z]]$ where $k$ is any field, let $P = (x,y)$ and let \[
I_n = (z^{n^4}-z^nx^n,y^n-z^nx,x^{n+1}-xz^{n^4-n}+yz^n).
\]  Then $\ell(R/(I_n+(x,y))) = n^4$ while $n^4 \leq \ell(R/I_n) \leq (n^4+2n)+(2n+1)^2(3n)$, and so \[
\displaystyle \lim_{n \rightarrow \infty} \dfrac{\ell(R/I_n)}{\ell(R/(I_n+P))} = 1.
\]  
\end{example}  \noindent Details of the computation appear in Section 6 as part of Proposition \ref{basecase}.

We will use such parameter ideals together with one key spectral sequence argument to give a proof of the main result.  In Section 5, we will give an alternative proof that $M$ asymptotically Cohen---Macaulay implies $M$ Cohen---Macaulay on the punctured spectrum.  This alternative proof shows directly that $M$ asymptotically Cohen---Macaulay implies $M/H^0_m(M)$ torsion-free (Lemma \ref{torsionlemma}) and that the quotient of an asymptotically Cohen---Macaualy module by a non-zerodivisor is asymptotically Cohen--Macaulay (Lemma \ref{modx}), which gives intuition for asymptotic depth behaving as a depth theory.  This approach also allows us to give a rigidity result on asymptotic depth (Corollary \ref{rigidity}).  This argument gives more information than the Section 4 argument on where, meaning in which cohomology module, the obstruction to asymptotic Cohen--Macaulayness can be found.  In Corollary \ref{forallk}, we see how the equivalence of asymptotically Cohen--Macaulay with Cohen--Macaulay on the punctured spectrum and quasi-unmixed, together with the torsion-free result, implies directly the entirety of the main result.  

It is worth noting that the quasi-unmixed assumption is genuinely necessary for $M$ Cohen--Macaulay on the punctured spectrum to imply asymptotically Cohen--Macaulay (Theorem \ref{equidimnecessary}).  The example below gives intuition for that result.  Roughly speaking, what goes wrong in the example below is that the ratio of the length of the first Koszul cohomology module to the length of $R/I_n$ cannot be bounded away from $1$.  If it could be, given that $R$ is not quasi-unmixed, we would have a counterexample to \cite[Theorem 1]{Vogel}. 

\begin{example}
$R = \dfrac{\kappa[[x,y,z]]}{(xz, yz)}$.  Let $I_n = (x^n-z^{n^3}, y^n)$.  Then $e_{I_n}(R) = e_{(x^n, y^n)}(R/(z)) = n^2$ while $\ell(R/I_n) = \ell\left(\dfrac{\kappa[[x,y,z]]}{(y^n, x^n-z^{n^3}, xz, yz)}\right) = n^3+n(n+1)$, where the $n^3$ counts monomials with a positive power of $z$ whose image is nonzero in $R/I_n$, noting that $z^{n^3+1} = z(x^n) = 0 \in R/I_n$, and $n(n+1)$ counts monomials $y^ix^j$ with $0 \leq i<n$ and $0 \leq j<n+1$ whose images are nonzero in $R/I_n$, noting that $x^{n+1} = xz^{n^3} = 0 \in R/I_n$.  Using Serre's formula, $n^2 = e_{I_n}(R) = \ell(R/I_n) -\ell(H^1(x^n-z^{n^3}, y^n; R)) = n^3+n(n+1) -\ell(H^1(x^n-z^{n^3}, y^n; R)) $, and so $\ell(H^1(x^n-z^{n^3}, y^n; R)) = n^3+n$.  In particular, $\displaystyle \lim_{n \rightarrow \infty} \dfrac{\ell(H^1(x^n-z^{n^3}, y^n; R))}{\ell(R/I_n)} =  \lim_{n \rightarrow \infty} \dfrac{n^3+n}{n^3+n(n+1)} = 1 \neq 0$.  
\end{example}

\noindent\textbf{Acknowledgement}: The author would like to thank her advisor, Mel Hochster, for his tremendous support during her doctoral work, of which this paper is part.  The author has been partially supported by NSF Grants DMS-1401384 and 0943832.  

\section{Statements of results from Section 6 and some simplifications}

We summarize below in one statement the results whose proofs appear in Section 6.  We will need only Theorem \ref{basicsummarizedbighomology} in Section 4 to show the main result.  

\begin{theorem} \label{basicsummarizedbighomology}
If $R = \kappa[[x_1, \ldots, x_d]]$ or $R = V[[x_1,x_2, \ldots, x_{d-1}]]$ or $R = V[[x_1,x_2, \ldots, x_{d}]]/(p^sx_1)$ with $s \geq 1$, $d \geq 2$, and $V = (V,pV,\kappa)$ a  discrete valuation ring, then neither $R/(x_1, \ldots, x_k)$ nor $R/(p, x_1, \ldots, x_{k})$ nor $R/(p, x_2, \ldots, x_{k})$ with $1 \leq k \leq d-1$ are $d$-effaceable.  
\end{theorem}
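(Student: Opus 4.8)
The plan is to unwind what ``$d$-effaceable'' means for these quotient modules, reduce the long list of cases to a handful, and then construct explicit parameter ideals. For any $R$-module $M$ and elements $f_1,\dots,f_d$, the top Koszul cohomology is $H^d(f_1,\dots,f_d;M)=M/(f_1,\dots,f_d)M$, so for $M=R/J$ and $I_n=(f_{1n},\dots,f_{dn})$ we have $H^d(f_{1n},\dots,f_{dn};R/J)=R/(J+I_n)$. Hence $R/J$ is $d$-effaceable precisely when $\ell(R/(J+I_n))/\ell(R/I_n)\to 0$ for \emph{every} sequence of parameter ideals $I_n\subseteq m^n$, so to prove $R/J$ is \emph{not} $d$-effaceable it suffices to produce a \emph{single} sequence of parameter ideals $I_n\subseteq m^n$ for which $\liminf_n \ell(R/(J+I_n))/\ell(R/I_n)>0$; I will in fact arrange that this ratio tends to $1$, which is the form that connects to the Lech-type limit formulas discussed in the introduction.

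Next I would shrink the list of cases. If $J\subseteq J'$ then $R/(J+I_n)\twoheadrightarrow R/(J'+I_n)$ and so $\ell(R/(J+I_n))\ge \ell(R/(J'+I_n))$; thus a sequence witnessing that $R/J'$ is not $d$-effaceable witnesses it for $R/J$ as well. Within each of the three families the ideals increase with $k$, so (discarding the zero-dimensional members, for which the statement is vacuous) it is enough to treat the largest ideal whose quotient is positive-dimensional. For $R=\kappa[[x_1,\dots,x_d]]$ this leaves only $R/(x_1,\dots,x_{d-1})\cong\kappa[[x_d]]$; for $R=V[[x_1,\dots,x_d]]/(p^sx_1)$ every ideal in the list lies in $(p,x_1,\dots,x_{d-1})$, whose quotient $\kappa[[x_d]]$ is one-dimensional, so one case again suffices; and for $R=V[[x_1,\dots,x_{d-1}]]$ one is left with $R/(x_1,\dots,x_{d-1})\cong V$, $R/(p,x_2,\dots,x_{d-1})\cong\kappa[[x_1]]$ and $R/(p,x_1,\dots,x_{d-2})\cong\kappa[[x_{d-1}]]$. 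In every case the target is a one-dimensional regular local ring $S=R/J$ with a regular parameter $t$ (one of $x_1,x_{d-1},x_d$, or $p$).

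For such a terminal pair $(R,J)$ the idea is to build a sequence of parameter ideals of $R$ that is ``concentrated along $V(J)$'': an $m$-primary ideal $I_n=(f_{1n},\dots,f_{dn})\subseteq m^n$ with $f_{1n}\equiv t^{e_n}\pmod J$ and $f_{2n},\dots,f_{dn}\equiv 0\pmod J$, for a suitably large exponent $e_n$. Then $R/(J+I_n)\cong S/t^{e_n}S$ has length exactly $e_n$, and the heart of the matter is the estimate
\[
e_n\ \le\ \ell(R/I_n)\ \le\ e_n+C\,n^{\dim R},
\]
the lower bound being forced by $\ell(R/I_n)\ge e(I_n)\ge e(m^n)=e(m)\,n^{\dim R}$ --- which is why $e_n$ must be taken well above $n^{\dim R}$, e.g.\ $e_n=n^{2\dim R-2}$ (the $n^4$ appearing in the displayed example, where $\dim R=3$) --- and the upper bound obtained from an explicit monomial $\kappa$-basis of $R/I_n$. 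Granting this, $\ell(R/(J+I_n))/\ell(R/I_n)=e_n/(e_n+O(n^{\dim R}))\to 1$, so $R/J$ is not $d$-effaceable. The generators are produced exactly as in the example: one starts from the obvious ideal $(x_1,\dots,x_{d-1},t^{e_n})$, which has colength $e_n$ along $V(J)$ but does not lie in $m^n$, and perturbs each $x_i$ by a ``mixing'' term built from $t$ and the next variable, chosen so that the generators fall into $m^n$ while the radical is unchanged and the colength is inflated only by a lower-order term; one then checks that the result is $m$-primary and reads off $\ell(R/I_n)$. For $R=V[[x_1,\dots,x_d]]/(p^sx_1)$ one carries the relation $p^sx_1=0$ and the two minimal primes $(p),(x_1)$ through the computation, and in the mixed-characteristic $V[[\cdots]]$ the uniformizer $p$ plays the role of an extra variable; neither changes the structure of the argument. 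A convenient bookkeeping device is induction on $d$ with base case $d=2$, the inductive step lifting a construction from a hyperplane section $R/(x_1)$ (or $R/(x_d)$) and raising $e_n$ enough that the lifted ideal still lies in $m^n$.

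The genuinely hard step is precisely this construction together with its colength estimate: the perturbations that are needed so that $I_n$ is a parameter ideal inside $m^n$ are exactly what threaten to push $\ell(R/I_n)$ past $e_n$, so the exponents must be tuned so that the ``off-$V(J)$'' part of $R/I_n$ is of strictly lower order than $e_n$. Everything else --- the identification of $H^d$ with a cokernel, the passage among the three forms of $R$ and the three families of ideals, and the comparison of the mixed-characteristic cases with the equal-characteristic one --- is formal.
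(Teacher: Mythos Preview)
Your plan is essentially the paper's. The paper (Propositions~\ref{d=2} and~\ref{d=2mixed}, drawing on the explicit constructions in Propositions~\ref{basecase} and~\ref{basecasemixed}) also reduces the $d$-effaceability question to the colength ratio $\ell(R/(J+I_n))/\ell(R/I_n)$, uses the same monotonicity in $J$ to pass to the height-$(d-1)$ quotient, and then builds explicit perturbed parameter ideals $I_n\subseteq m^n$ for which this ratio is bounded away from~$0$.

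Two small differences are worth flagging. First, the paper does \emph{not} organize the construction by induction on $d$ via hyperplane sections; it writes down the perturbed generators directly for every $d$ and verifies the colength bound by an explicit (and fairly intricate) chain of membership computations. Your ``mixing'' description of the generators is accurate, but your proposed inductive lift from $R/(x_1)$ would need its own argument that the lifted ideal stays $m$-primary with the right colength control, and that is not supplied here or in the paper. Second, in the Gorenstein ring $V[[x_1,\dots,x_d]]/(p^sx_1)$ the paper only establishes a limit inferior of $1/3$ (or $1/(s+1)$ when $d=2$), not~$1$: the relation $p^sx_1=0$ forces extra length along both minimal primes, so the denominator picks up an additional copy of the one-dimensional piece. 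Your claim that the ratio tends to~$1$ in all cases is therefore stronger than what the paper actually proves, though of course any positive lower bound suffices for the theorem.
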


This result is proved as Propositions \ref{d=2} and \ref{d=2mixed} in Section 6.  We will need the full strength of Theorem \ref{summarizedbighomology} in addition to Theorem \ref{basicsummarizedbighomology} in Section 5 to show that $M$ asymptotically Cohen--Macaulay implies $M/H^0_m(M)$ torsion-free.  

\begin{theorem} \label{summarizedbighomology}
Let $d \geq 3$, $k \geq 1$, $s \geq 1$,  $d-2 \geq h \geq 1$, $\kappa$ a field, and $V = (V, pV, \kappa)$ a discrete valuation ring.  If $R = \kappa[[x_1, \ldots, x_d]]$, then $M = (x_1, \ldots, x_{d-h})R$ is not $h+1$-effaceable.  If $R = V[[x_1,x_2, \ldots, x_{d-1}]]$, then neither $M = (p^k, x_1, \ldots, x_{d-(h+1)})R$ nor $M = (x_1, \ldots, x_{d-h})R$ is $h+1$-effaceable.  If $R = V[[x_1,x_2, \ldots, x_{d}]]/(p^sx_1)$, then none of $M = (p^k, x_1, \ldots, x_{d-h})R$ nor $M = (p^k, x_2, \ldots, x_{d-{(h-1)}})R$ nor $M = (x_1, \ldots, x_{d-(h-1)})R$ is $h+1$-effaceable.  In all cases above and also when $h = d-1$, $N = R/M$ is not $h$-effaceable.  
\end{theorem}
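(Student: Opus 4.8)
The plan is to deduce all of the assertions about the ideals $M$ from the corresponding assertions about the quotients $N = R/M$, and then to prove the latter by producing, for each listed $N$, an explicit sequence of parameter ideals witnessing the failure of $h$-effaceability. For the first reduction, note that in each of the three families $R$ is either regular or a hypersurface quotient of a regular ring, hence Cohen--Macaulay; so for any system of parameters $I_n = (f_1,\dots,f_d)$ of $R$ one has $H^i(f_1,\dots,f_d;R) = 0$ for all $i < d$. Applying Koszul cohomology to $0 \to M \to R \to R/M \to 0$ and using this vanishing in cohomological degrees $h$ and $h+1$ (legitimate because $h \le d-2$) yields an isomorphism $H^{h+1}(I_n;M) \cong H^h(I_n;R/M)$ for every such $I_n$, with the same denominator $\ell(R/I_n)$. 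Hence $M$ is $(h+1)$-effaceable if and only if $N = R/M$ is $h$-effaceable, and it suffices to prove the statements about the various $N$; this also covers the extra range $h = d-1$ (for which the $M$-statements are not claimed).

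So fix one of the listed quotients $N$, of dimension $h$. The goal becomes a sequence of parameter ideals $I_n \subseteq m^n$ of $R$ with $\ell(H^h(I_n;N))/\ell(R/I_n)$ bounded away from $0$. When $N$ is Cohen--Macaulay of dimension $h$ --- which holds in the equicharacteristic cases and in several of the mixed-characteristic cases --- I would choose the generators of $I_n$ so that the images $\bar f_{d-h+1},\dots,\bar f_d$ form a (necessarily regular) system of parameters of $N$ while $\bar f_1,\dots,\bar f_{d-h}$ lie in the ideal they generate in $N$. Then the Koszul complex on $N$ splits, up to quasi-isomorphism, as the Koszul complex on the regular sequence $\bar f_{d-h+1},\dots,\bar f_d$ --- quasi-isomorphic to $N/(f_{d-h+1},\dots,f_d)N$ placed in cohomological degree $h$ --- tensored with the Koszul complex on $\bar f_1,\dots,\bar f_{d-h}$, which act as zero on that quotient; so $H^h(I_n;N) \cong N/(f_{d-h+1},\dots,f_d)N$. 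Since the $f_i$ with $i\le d-h$ were chosen inside $M + (f_{d-h+1},\dots,f_d)$, we have $M + I_n = M + (f_{d-h+1},\dots,f_d)$, so $\ell(H^h(I_n;N)) = \ell(R/(M+I_n))$. Everything therefore reduces to producing, for each $M$, parameter ideals $I_n \subseteq m^n$ of this ``triangular'' shape with $\ell(R/I_n) \le C\,\ell(R/(M+I_n))$ for a constant $C$ independent of $n$ --- the surprising phenomenon illustrated by the example $I_n = (z^{n^4}-z^n x^n,\ y^n - z^n x,\ x^{n+1} - x z^{n^4-n} + y z^n)$ in $\kappa[[x,y,z]]$ with $M = (x,y)$. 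The general construction would be obtained from this by induction on $d$: the base case $d = 2$, equi- and mixed-characteristic, is exactly Theorem~\ref{basicsummarizedbighomology} (Propositions~\ref{d=2} and \ref{d=2mixed}), and the inductive step adjoins one new power-series variable together with one new generator lying deep in $m$ but congruent modulo the other generators to a power of that variable, so that the colength grows by only a bounded factor.

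The main obstacle is twofold. First, the colength estimate $\ell(R/I_n) \le C\,\ell(R/(M+I_n))$ itself: modulo $M$ the ideal $I_n$ is very simple, but a priori $R/I_n$ carries many ``mixed'' monomials invisible modulo $M$, and the exponents in the generators --- which in the example already jump from $n$ to $n^4$ --- must be tuned so that these extra monomials contribute only $O(\ell(R/(M+I_n)))$; this is the bulk of the Section~6 computation. Second, in the mixed-characteristic cases where $R = V[[x_1,\dots,x_d]]/(p^s x_1)$ and $N$ can fail to be Cohen--Macaulay, the clean product decomposition of the Koszul complex is unavailable and must be replaced by a direct computation of $H^h(I_n;N)$ that tracks the torsion introduced by the relation $p^s x_1$; the bookkeeping is heavier, but the structure of the argument, and the central colength estimate it rests on, are unchanged.
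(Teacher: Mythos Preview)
Your reduction from the $M$-statements to the $N$-statements via $0\to M\to R\to N\to 0$ and the Cohen--Macaulayness of $R$ is exactly what the paper does, and your computation $H^h(I_n;N)\cong R/(M+I_n)$ under the ``triangular'' hypothesis is correct and is precisely how the paper handles the base case $h=1$ in Proposition~\ref{basecase} (and the case $h=d-1$ in Propositions~\ref{d=2} and~\ref{d=2mixed}).

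Where you diverge is the inductive scheme. You propose to build triangular parameter ideals for every pair $(d,h)$ by an induction on $d$, redoing the colength estimate at each step. The paper instead carries out the explicit construction \emph{only} for $h=1$ (Proposition~\ref{basecase}, valid for all $d\ge3$) and then runs a purely structural induction on $h$ for fixed $d$ (Theorem~\ref{d>2} and its mixed-characteristic analogues Theorem~\ref{d>2mixed}, Corollaries~\ref{d>2notprime} and~\ref{d>2mixednotprime}): from
\[
0\longrightarrow(x_1,\ldots,x_{d-h-1})R\longrightarrow(x_1,\ldots,x_{d-h})R\longrightarrow (x_{d-h})\kappa[[x_{d-h},\ldots,x_d]]\longrightarrow 0
\]
and a depth count one gets $H^{h+1}(I_n;(x_1,\ldots,x_{d-h}))\hookrightarrow H^{h+1}(I_n;\kappa[[x_{d-h},\ldots,x_d]])$, and then $0\to(x_1,\ldots,x_{d-h-1})\to R\to\kappa[[x_{d-h},\ldots,x_d]]\to 0$ gives a further injection into $H^{h+2}(I_n;(x_1,\ldots,x_{d-h-1}))$. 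This moves non-effaceability from $h$ to $h+1$ with no new parameter ideals and no new colength estimates; the heavy computation is done once, and the passage to higher $h$ is formal. Your route, by contrast, would need a fresh colength bound at every inductive step, and as written the base $d=2$ supplies only $h=1$: to reach arbitrary $(d,h)$ your induction must allow two kinds of steps (adjoining the new variable to $M$ or not), which you do not distinguish. Your plan is not wrong in spirit, but it is heavier and incomplete as stated; the short-exact-sequence induction on $h$ is the device that lets the paper confine all of the hard estimates to the single case $h=1$.
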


Theorem \ref{summarizedbighomology} combines the results of Theorem \ref{d>2}, Corollary \ref{d>2notprime}, Corollary \ref{d>2mixednotprime}, Proposition, \ref{d=2} and Proposition \ref{d=2mixed}.  

\begin{definition}
If $M$ is a finitely generated module over the local ring $(R, m, \kappa)$, let $H^i(I;M)$ denote any one of the $i^{th}$ Koszul cohomology modules of $(f_1, \ldots, f_r)$ on the module $M$ when $(f_1, \ldots, f_r)$ minimally generate $I$.
\end{definition}

Recall that if $f_1, \ldots, f_r$ and $g_1, \ldots, g_r$ both minimally generate $I$, then $H^i(f_1, \ldots, f_r;M) \cong H^i(g_1, \ldots, g_r;M)$ for all $i \in \Z$.  We will only use this notation when we are interested in the module $H^i(f_1, \ldots, f_r;M)$ up to isomorphism, for example when we are interested in its length, as we will frequently be.

We now make a reduction relevant to both Section 4 and Section 5.  We will reduce to the complete case.  Denote by $\hat{R}$ the $m$-adic completion of the local ring $(R, m, \kappa)$ and by $\hat{M}$ the $m$-adic completion of the $R$-module $M$.  Using the fact we recalled just above, it is sufficient to show that expansion and contraction give a bijection between ideals of $R$ generated by parameters on $M$ and those of  $\hat{R}$ generated by parameters on $\hat{M}$.  

\begin{lemma} \label{completereduction}
Let $M$ be a $d$-dimensional module over the local ring $(R, m, \kappa)$.  Then expansion and contraction give a bijection between ideals of $R$ generated by parameters on $M$ and ideals of $\hat{R}$ generated by parameters on $\hat{M}$.
\end{lemma}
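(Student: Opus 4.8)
The plan is to exhibit the bijection explicitly by expansion $I \mapsto I\hat R$ and contraction $J \mapsto J \cap R$, and to verify two things: that these operations are mutually inverse on the relevant classes of ideals, and that each operation preserves the property of being generated by parameters on the module. The inverse property is standard: for any $m$-primary ideal $I$ of $R$ one has $I\hat R \cap R = I$ (faithful flatness of completion, together with the Artin--Rees/completeness of $R/I$), and for any $m$-primary ideal $J$ of $\hat R$ one has $(J \cap R)\hat R = J$, since $J \supseteq \hat m^t$ for some $t$, and $\hat m^t = m^t \hat R$ is extended, so $J$ is extended from its contraction. Both ideals appearing in the statement are $m$-primary, because an ideal generated by a full system of parameters on a $d$-dimensional module $M$ is $m$-primary modulo $\Ann M$, hence $m$-primary in $R$ once we note $\dim(M)=d$ forces $\dim(R/\Ann M)=d$ and the parameters cut $M$ down to finite length; the same applies on the $\hat R$ side.

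The substantive point is the equivalence ``$(f_1,\dots,f_d)$ is a system of parameters on $M$'' $\iff$ ``$(f_1,\dots,f_d)\hat R$ is a system of parameters on $\hat M$,'' and, conversely, that every system of parameters on $\hat M$ arises (after contraction) from one on $M$. For the first direction: if $f_1,\dots,f_d$ are parameters on $M$, then $M/(f_1,\dots,f_d)M$ has finite length, so $\hat M/(f_1,\dots,f_d)\hat M \cong \widehat{M/(f_1,\dots,f_d)M} = M/(f_1,\dots,f_d)M$ also has finite length, and since $\dim \hat M = \dim M = d$, the $d$ elements $f_1,\dots,f_d$ are a system of parameters on $\hat M$. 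The reverse implication is symmetric, using that finite length is detected before or after completion. For the converse surjectivity claim: given parameters $g_1,\dots,g_d$ on $\hat M$ generating $J$, set $I = J \cap R$; then $I\hat R = J$ by the paragraph above, so $\dim(\hat M/I\hat M) = 0$, hence $\dim(M/IM) = 0$, so $I$ is $m$-primary and generated (minimally) by $d$ elements that therefore form a system of parameters on $M$; its expansion is $J$.

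I expect the main obstacle to be purely bookkeeping rather than conceptual: one must be careful that ``generated by parameters on $M$'' is being used to mean generated by a genuine length-$d$ system of parameters (not merely by elements whose radical is $m$ after adding $\Ann M$), and that the minimal number of generators is preserved so that the Koszul cohomology modules $H^i(I;M)$ and $H^i(I\hat R;\hat M)$ are taken on the same number of elements --- but this is immediate since $I$ and $I\hat R$ have the same residue ring modulo $m$ and modulo $m^2$, so $\mu(I) = \mu(I\hat R)$, and in any case we only need $I$ to be generated by \emph{some} $d$ parameters, which both $I$ and $I\hat R$ are by the dimension count. The only mild care needed is the observation $\dim(R/\Ann_R M) = \dim(\hat R/\Ann_{\hat R}\hat M) = d$ so that a $d$-element $m$-primary ideal is automatically a parameter ideal on both sides; this follows from $\widehat{M} \otimes_{\hat R} \kappa = M \otimes_R \kappa$ and faithful flatness.
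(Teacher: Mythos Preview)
Your overall strategy matches the paper's, but there is one genuine slip: the assertion in your first paragraph that an ideal generated by parameters on $M$ is automatically $m$-primary in $R$ is false. Knowing $\dim(R/\Ann M)=d$ does not force $\Ann M$ to be nilpotent; for instance, in $R=\kappa[[x,y,z]]/(xz,yz)$ with $M=R/(z)$ one has $\dim M=\dim R=2$, the elements $x,y$ are parameters on $M$, yet $(x,y)$ is not $m$-primary in $R$ since $R/(x,y)\cong \kappa[[z]]$. Your surjectivity argument depends on this claim, because you deduce $(J\cap R)\hat R=J$ from $J\supseteq \hat m^t$, which in turn requires $J$ to be $\hat m$-primary. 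The paper handles this at the very first step by replacing $R$ with $R/\Ann(M)$, after which $M$ is faithful, every parameter ideal on $M$ is genuinely $m$-primary, and your argument goes through verbatim.

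Once that reduction is in place, your route to surjectivity (contract $J$ to $I$, use the standard bijection of $m$-primary ideals to get $I\hat R=J$, then count $\mu(I)=\mu(J)=d$) is a clean alternative to the paper's. The paper instead chooses $N$ with $\hat m^N\subseteq \hat m\hat I$, approximates each generator $\hat y_i$ by an element of $R$ modulo $\hat m^N$, and invokes Nakayama to see the ideal is unchanged. Both arguments are really the same fact---the bijection between $m$-primary ideals of $R$ and $\hat m$-primary ideals of $\hat R$---packaged differently; yours is slightly more economical, the paper's slightly more explicit about exhibiting actual parameter elements in $R$.
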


\begin{proof}
By replacing $R$ by $R/\Ann(M)$, we may assume that $M$ is faithful and that $\dim(R) = d$.  Let $x_1, \ldots, x_d$ be parameters on $M$, and denote by $\hat{x_1}, \ldots, \hat{x_d}$ their images in $\hat{R} = (\hat{R}, \hat{m}, \kappa)$.  Because $M/(x_1, \ldots, x_d)M \cong \hat{M}/(\hat{x_1}, \ldots, \hat{x_d})\hat{M}$, it is immediate that $\hat{x_1}, \ldots, \hat{x_d}$ are parameters on $\hat{M}$.  Suppose that $\hat{y}, \ldots, \hat{y}_d \in \hat{R}$ are parameters on $\hat{M}$, and let $\hat{I} = (\hat{y_1}, \ldots, \hat{y_d})$.  Choose $N>0$ so that $\hat{m}^N \subseteq \hat{m}\hat{I}$.  We claim that for any $\hat{\varepsilon_i} \in \hat{m}^N$ for $1 \leq i \leq d$, $\hat{I} = (\hat{y_1}+\hat{\varepsilon_1}, \ldots, \hat{y_d}+\hat{\varepsilon_d})$.  The containment $ (\hat{y_1}+\hat{\varepsilon_1}, \ldots, \hat{y_d}+\hat{\varepsilon_d}) \subseteq \hat{I}$ is clear because each $\hat{y_i} \in \hat{I}$ and each $\hat{\varepsilon_i} \in \hat{m}^N \subseteq \hat{m}\hat{I} \subseteq \hat{I}$.  The other containment follows from Nakayama's Lemma together with the fact that images of the $\hat{y_i}+\hat{\varepsilon_i}$ generate $\hat{I}/\hat{m}\hat{I}$ because the $\hat{\varepsilon_i} \in \hat{m}\hat{I}$ and the $\hat{y_i}$ generate $\hat{I}$.  Now choose the $\hat{\varepsilon_i}$ so that each $\hat{y_i}+\hat{\varepsilon_i}$ is the image of some $y_i \in R$, and set $I = (y_1, \ldots, y_d)$.  Because $I/mI \cong \hat{I}/\hat{m}\hat{I}$, we have, as the notation suggests, $\hat{I} = I\hat{R}$, and, using the well-known bijection between $m$-primary ideals of $R$ and of $\hat{m}$-primary ideals of $\hat{R}$, $I = \hat{I} \cap R$.  
\end{proof} 

\section{The spectral sequence argument in all characteristics}

The purpose of this section is to prove Theorem \ref{main}.  Let $(\underline{\hspace{0.5cm}})^\vee$ denote the Matlis dual.
 
 \begin{lemma} \label{SpecSeqLemma} Let $(R,m,\kappa)$ be a complete local ring, and let $M$ be a finitely generated $R$-module of dimension $d$. Then for each $0 \leq i <d$ and every ideal $I = (f_1, \ldots, f_d)$ of $R$ generated by parameters on $M$, \[
 \ell(H^i(f_1, \ldots, f_d;M)) \leq \sum_{j = 0}^{i} \ell(H^{d-i+j}(f_1, \ldots, f_d;H^j_m(M)^\vee)). \label{goodhomologybound}
 \]
\end{lemma}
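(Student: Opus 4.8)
The plan is to relate $H^i(f_1,\dots,f_d;M)$ to the Koszul cohomology of the local cohomology modules $H^j_m(M)$ via the two spectral sequences of a single double complex, and then to convert the resulting bound into the stated one by a short Matlis-duality computation. Write $K_\bullet=K_\bullet(f_1,\dots,f_d;R)$ for the Koszul complex and $\check C^\bullet=\check C^\bullet(f_1,\dots,f_d;R)$ for the \v{C}ech (stable Koszul) complex on $f_1,\dots,f_d$, and form the bounded double complex $G^{p,q}=\Hom_R(K_p,\check C^q\otimes_R M)$ for $0\le p,q\le d$, with the (cochain) Koszul differential raising $p$ and the \v{C}ech differential raising $q$; let $T^\bullet$ denote its total complex. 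Since the double complex is bounded, both of its spectral sequences converge to $H^\bullet(T^\bullet)$.

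First I would compute $H^\bullet(T^\bullet)$ by running the spectral sequence that takes cohomology in the Koszul ($p$) direction first. Because $\Hom_R(K_\bullet,-)$ computes Koszul cohomology, its $E_1$-page is $H^p(f_1,\dots,f_d;\check C^q\otimes_R M)$. For $q=0$ this is $H^p(f_1,\dots,f_d;M)$. For $q\ge 1$ each direct summand of $\check C^q\otimes_R M$ is a localization $M_{f_S}$ with $|S|=q\ge 1$, in which some $f_i$ is a unit; since $(f_1,\dots,f_d)$ annihilates all Koszul cohomology (multiplication by each $f_i$ is null-homotopic on $K_\bullet$), it follows that $H^p(f_1,\dots,f_d;M_{f_S})=0$. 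Hence this spectral sequence is concentrated on the line $q=0$, so it degenerates and $H^n(T^\bullet)\cong H^n(f_1,\dots,f_d;M)$ for all $n$.

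Next I would run the other spectral sequence, taking cohomology in the \v{C}ech ($q$) direction first. As each $K_p$ is finite free, $\Hom_R(K_p,-)$ is exact and commutes with cohomology, so the $E_1$-page is $\Hom_R(K_p,H^q(\check C^\bullet\otimes_R M))$; and because $f_1,\dots,f_d$ are parameters on $M$ the ideal $(f_1,\dots,f_d)R+\Ann M$ is $m$-primary, whence $\check C^\bullet\otimes_R M$ computes $H^\bullet_m(M)$ and the $E_1$-page is $\Hom_R(K_p,H^q_m(M))$. Taking cohomology in the remaining (Koszul) direction, the $E_2$-page is $H^p(f_1,\dots,f_d;H^q_m(M))$. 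Combining with the previous paragraph, this gives a convergent spectral sequence
\[
E_2^{p,q}=H^p(f_1,\dots,f_d;H^q_m(M))\ \Longrightarrow\ H^{p+q}(f_1,\dots,f_d;M).
\]
Every module appearing here has finite length (each is annihilated by a power of $m$ and is a subquotient of a finitely generated or of an Artinian module), so, on the total-degree-$i$ antidiagonal, only $q=0,\dots,i$ contribute (as $p=i-q\ge 0$) and $E_\infty^{p,q}$ is a subquotient of $E_2^{p,q}$; hence
\[
\ell\bigl(H^i(f_1,\dots,f_d;M)\bigr)\ \le\ \sum_{q=0}^{i}\ell\bigl(H^{i-q}(f_1,\dots,f_d;H^q_m(M))\bigr).
\]

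It remains to rewrite each summand, for which I would use the identity $\ell(H^p(f_1,\dots,f_d;N))=\ell(H^{d-p}(f_1,\dots,f_d;N^\vee))$, valid for every $R$-module $N$ with $(f_1,\dots,f_d)R+\Ann N$ being $m$-primary. Indeed, Matlis duality is exact and, since each $K_p$ is finite free, carries the cochain complex $\Hom_R(K_\bullet,N)$ to the chain complex $K_\bullet\otimes_R N^\vee$, so $H^p(f_1,\dots,f_d;N)^\vee\cong H_p(f_1,\dots,f_d;N^\vee)$; the self-duality of the Koszul complex on $d$ elements identifies $H_p(f_1,\dots,f_d;N^\vee)\cong H^{d-p}(f_1,\dots,f_d;N^\vee)$, and $\ell(L)=\ell(L^\vee)$ for finite length $L$. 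Applying this with $N=H^q_m(M)$ replaces $\ell(H^{i-q}(f_1,\dots,f_d;H^q_m(M)))$ by $\ell(H^{d-i+q}(f_1,\dots,f_d;H^q_m(M)^\vee))$, and reindexing by $j=q$ produces exactly the asserted inequality. I expect the only genuinely delicate points to be the vanishing $H^p(f_1,\dots,f_d;M_{f_S})=0$ for $|S|\ge 1$ (which is what forces the first spectral sequence to degenerate), and keeping the cohomological degrees consistent in the final Matlis-duality step; the remainder is routine homological bookkeeping.
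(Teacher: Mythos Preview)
Your argument is correct, and it reaches the same inequality, but it is organized differently from the paper's proof. The paper builds the double complex from the homological Koszul complex of $(f_1,\dots,f_d)$ on $M$ together with $\Hom(-,\mathcal{A}^\bullet)$ for the dualizing complex $\mathcal{A}^\bullet$ of $R$; local duality then makes the Matlis duals $H^j_m(M)^\vee$ appear directly on the $E_2$-page, and the inequality is read off there. You instead use the Koszul--\v{C}ech double complex, so your $E_2$-page carries $H^p(\underline{f};H^q_m(M))$ rather than $H^p(\underline{f};H^q_m(M)^\vee)$, and you supply the missing Matlis duality by hand at the end via the identity $\ell(H^p(\underline{f};N))=\ell(H^{d-p}(\underline{f};N^\vee))$. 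In effect, you have unwound the local duality that is packaged into the paper's dualizing-complex argument. What your route buys is that it avoids invoking the dualizing complex altogether (only the \v{C}ech complex and elementary Matlis duality are used), at the cost of the extra bookkeeping in the final step; what the paper's route buys is that the duals appear for free and the degree-matching is automatic. The two key points you flag---vanishing of $H^p(\underline{f};M_{f_S})$ for $|S|\ge1$ (since some $f_i$ acts invertibly, while it also annihilates Koszul cohomology) and the finite-length claim (subquotients of Artinian modules annihilated by an $m$-primary ideal have finite length)---are both handled correctly.
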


\begin{proof}
The argument is similar to Theorem 3.16 in \cite{sixLectures}, and an explanation of how this result follows directly from the that result can be found in \cite{joint}.  We consider the spectral sequence of the double complex coming from applying $\Hom( \underline{\hspace{5mm}}, \mathcal{A}^\bullet)$, where $\mathcal{A}^\bullet$ is the dualizing complex of $R$, with the homological Koszul complex of $(f_1, \ldots, f_d)$ on $M$.  The inequality in the statement of the theorem can be read off from the $E_2$ page.
\end{proof}
      
 \begin{theorem}\label{backward} Let $(R,m,\kappa)$ be a local ring, let $M$ be a finitely generated $R$-module of dimension $d$, and fix $0 < k \leq d$. If $\ell(H_m^j(M))<\infty$ for all $j<k$, then \[
 \sup\{\ell(H^i((f_1, \ldots, f_d);M)) \mid f_1, \ldots, f_d \mbox{ parameters on $M$}, 0 \leq i <k\} <\infty.
 \]  Moreover, if $R$ is complete, then for each $0 \leq i<k$, \[
 \sup\{\ell(H^i((f_1, \ldots, f_d);M)) \mid f_1, \ldots, f_d \mbox{ parameters on $M$}\} \leq \sum_{j = 0}^{i} {d \choose i-j} \ell(H^j_m(M)^\vee).
 \]
 \end{theorem}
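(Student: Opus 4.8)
The plan is to reduce to the complete case and then feed the finite-length hypothesis into the estimate of Lemma \ref{SpecSeqLemma}, bounding the Koszul cohomology of a finite-length module in the crudest possible way.

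First I would reduce to the case that $R$ is complete. By Lemma \ref{completereduction}, the assignment $I \mapsto I\hat{R}$ is a bijection between ideals of $R$ generated by parameters on $M$ and ideals of $\hat{R}$ generated by parameters on $\hat{M}$. If $I = (f_1, \ldots, f_d)$ is such an ideal, then the Koszul cochain complex $\Hom_R(K_\bullet(f_1, \ldots, f_d), M)$ has each term a direct sum of copies of $M$ and is annihilated by $I + \Ann(M)$, whose radical is $m$; hence $H^i(f_1, \ldots, f_d; M)$ has finite length for every $i$, and since $R \to \hat{R}$ is faithfully flat, $H^i(f_1, \ldots, f_d; M) \otimes_R \hat{R} \cong H^i(\hat{f_1}, \ldots, \hat{f_d}; \hat{M})$ with the same length. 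Likewise $H^j_m(M)$ and $H^j_{\hat{m}}(\hat{M})$ have the same length, so the hypothesis passes to $\hat{M}$. Thus it suffices to prove the ``moreover'' statement for complete $R$; the first statement is then the maximum of those bounds over $0 \leq i < k$, transported back to $R$.

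Now assume $R$ is complete, fix $0 \leq i < k$, and let $f_1, \ldots, f_d$ be parameters on $M$. Since $i < k \leq d$, Lemma \ref{SpecSeqLemma} applies and gives
\[
\ell(H^i(f_1, \ldots, f_d; M)) \leq \sum_{j=0}^{i} \ell\bigl(H^{d-i+j}(f_1, \ldots, f_d; H^j_m(M)^\vee)\bigr).
\]
For each $0 \leq j \leq i$ we have $j < k$, so $H^j_m(M)$ has finite length, hence so does its Matlis dual, with $\ell(H^j_m(M)^\vee) = \ell(H^j_m(M))$. For any finite-length module $N$, the module $H^\ell(f_1, \ldots, f_d; N)$ is a subquotient of the $\ell$-th term $N^{\binom{d}{\ell}}$ of the Koszul cochain complex, so $\ell(H^\ell(f_1, \ldots, f_d; N)) \leq \binom{d}{\ell}\,\ell(N)$. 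Applying this with $\ell = d - i + j$ and using $\binom{d}{d-i+j} = \binom{d}{i-j}$ yields
\[
\ell(H^i(f_1, \ldots, f_d; M)) \leq \sum_{j=0}^{i} \binom{d}{i-j}\,\ell(H^j_m(M)^\vee),
\]
which is independent of the chosen parameters, as required.

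There is no genuinely hard step once Lemma \ref{SpecSeqLemma} is available: all the content sits in that spectral-sequence input. The only points needing care are the completion reduction --- handled by Lemma \ref{completereduction} together with flatness of $R \to \hat{R}$ and the observation that every module in sight has finite length --- and matching the cohomological index $d - i + j$ appearing in Lemma \ref{SpecSeqLemma} to the binomial coefficient $\binom{d}{i-j}$ in the stated bound.
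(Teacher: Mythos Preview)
Your proof is correct and follows essentially the same route as the paper's: reduce to the complete case via Lemma \ref{completereduction}, invoke Lemma \ref{SpecSeqLemma}, and then bound each $\ell(H^{d-i+j}(f_1,\ldots,f_d;H^j_m(M)^\vee))$ by $\binom{d}{i-j}\ell(H^j_m(M)^\vee)$ using the finite-length hypothesis. You have merely made explicit the subquotient reason for that last bound and the binomial identity $\binom{d}{d-i+j}=\binom{d}{i-j}$, which the paper leaves implicit.
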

 
 \begin{proof}
Using Lemma \ref{completereduction}, we may assume that $R$ and $M$ are complete.  Therefore, it suffices to show only the last statement of the theorem, which follows from Lemma \ref{SpecSeqLemma}.  More precisely, by examining the Koszul cohomology modules $H^{d-i+j}(f_1, \ldots, f_d;H^j_m(M)^\vee)$ on the right-hand side of the inequality in Lemma \ref{SpecSeqLemma}, using the facts that each  $\ell(H_m^j(M))<\infty$ for $0 \leq j <d-1$ and that $(f_1, \ldots, f_d) \subseteq m$, we see that for each parameter ideal $I = (f_1, \ldots, f_d)$ and each $0 \leq i <d$, \[
  \ell(H^i(f_1, \ldots, f_d;M)) \leq \sum_{j = 0}^{i} {d \choose i-j} \ell(H^j_m(M)^\vee).
  \]
  \end{proof}
  
 We now aim to prove that asymptotically small Koszul cohomology implies finite length local cohomology.  We begin by reducing to the case of a ring that is either regular or Gorenstein of a certain form.  
  
 \begin{lemma} \label{RegGorReduction} Suppose that for all finitely generated modules $M$ over a complete local ring $(S,n)$ with $\dim(M) = \dim(S)$ where $S$ is either regular or Gorenstein of the form $\dfrac{V[[x_1, \ldots, x_d]]}{(p^sx_1)}$, where $p$ generates the maximal ideal of the complete discrete valuation ring $V$ and $s \geq 1$, if asydepth $(M) \geq k$, then $H^i_m(M)<\infty$ for all $i<k$.  Then the same holds over any local ring $(R, m)$.
\end{lemma}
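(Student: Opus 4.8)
The plan is to reduce the statement over an arbitrary local ring $(R,m)$ to the hypothesis, which concerns only complete regular rings and complete Gorenstein rings of the special form $V[[x_1,\dots,x_d]]/(p^sx_1)$. By Lemma \ref{completereduction} we may first replace $(R,m)$ and $M$ by their $m$-adic completions: this preserves $\dim(M)=\dim(R)$, preserves the local cohomology modules $H^i_m(M)$ and their lengths (completion is faithfully flat and $H^i_m(M)$ is already $m$-torsion, hence unchanged), and, crucially, by the bijection between parameter ideals of $R$ on $M$ and parameter ideals of $\hat R$ on $\hat M$, preserves the property asydepth $\geq k$. So we may assume $R$ is complete. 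Replacing $R$ by $R/\Ann(M)$ (which changes neither $H^i_m(M)$ nor the Koszul cohomology $H^i(I;M)$, and makes $\dim R=\dim M=d$), we may assume $M$ is faithful.

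Next I would invoke Cohen's structure theorem: a complete local ring $R$ with $\dim R=d$ is a module-finite extension of a complete regular local ring $S$, where $S=\kappa[[x_1,\dots,x_d]]$ in the equicharacteristic case and $S=V[[x_1,\dots,x_{d-1}]]$ (with $V$ a complete DVR, or a suitable quotient handling the ramified case) in the mixed-characteristic case — and in the mixed case one may need instead the Gorenstein ring $S=V[[x_1,\dots,x_d]]/(p^sx_1)$ precisely when $p\in m$ is not a nonzerodivisor on $R$, i.e. when $R$ is not flat over the DVR; this is exactly why the hypothesis is stated for those two classes of rings $S$. Viewing $M$ as an $S$-module via the finite map $S\hookrightarrow R$, we have $\dim_S M=\dim_R M=d=\dim S$. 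Now the key transfer facts are: (i) $H^i_m(M)$ computed over $R$ agrees with $H^i_{n}(M)$ computed over $S$ (independence of local cohomology of the base ring, since $mS$-adic and $m$-adic topologies on $M$ agree via the finite extension, or simply by $H^i_n(M)=H^i(\underline{y};M)$-type Čech descriptions with $\underline y$ a s.o.p. of $S$); and (ii) an ideal $I=(f_1,\dots,f_d)$ generated by parameters on $M$ over $R$ can be compared with ideals generated by parameters on $M$ over $S$ — in fact a system of parameters $y_1,\dots,y_d$ of $S$ that lies in high powers of $m$ remains a system of parameters on $M$ over $R$, so asydepth$_R M\geq k$ forces asydepth$_S M\geq k$ (the Koszul cohomology $H^i(\underline y;M)$ is the same whether $\underline y$ is regarded in $S$ or in $R$, and $\ell_R=\ell_S$ since $\kappa$ is the common residue field). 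Granting these, asydepth$_S M\geq k$, so the hypothesis applied over $S$ gives $\ell_S(H^i_n(M))<\infty$ for all $i<k$, hence by (i) $\ell_R(H^i_m(M))<\infty$ for all $i<k$, as desired.

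The main obstacle I anticipate is the direction of the comparison in step (ii): one must be sure that asydepth$_R(M)\geq k$ really implies asydepth$_S(M)\geq k$, i.e. that every sequence of parameter ideals of $S$ (in increasingly high powers of $n$, the maximal ideal of $S$) extends to, or can be compared with, a sequence of parameter ideals of $R$ (in increasingly high powers of $m$), with the two length ratios $\ell(H^i(\underline y;M))/\ell(S/\underline y)$ and $\ell(H^i(\underline y;M))/\ell(R/\underline y R)$ comparable up to a bounded constant — this uses that $R$ is module-finite over $S$, so $\ell_S(R/\underline y R)\leq (\text{number of generators of }R\text{ over }S)\cdot\ell_S(S/\underline y)$ and conversely $\ell_S(S/\underline y)\leq \ell_S(R/\underline y R)$ since $S\hookrightarrow R$ splits after tensoring appropriately, giving uniform equivalence of the denominators. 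A secondary subtlety is the case analysis for which structure ring $S$ to use in mixed characteristic — equivalently, checking that when $R$ is not a free module over its coefficient DVR, the Gorenstein presentation $V[[x_1,\dots,x_d]]/(p^sx_1)$ indeed receives $R$ as a module-finite extension; this is where the specific list of allowed $S$ in the hypothesis gets used in an essential way, and it is worth verifying carefully rather than waving at "Cohen's theorem."
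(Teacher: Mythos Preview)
Your proposal is correct and follows essentially the same approach as the paper: complete via Lemma~\ref{completereduction}, descend to a structure ring $S$ via Cohen's theorem, transfer asydepth from $R$ to $S$ using the bound $\ell(R/I_nR)\leq\nu_S(R)\cdot\ell(S/I_n)$, apply the hypothesis over $S$, and then identify $H^i_n(M)=H^i_m(M)$. Your concern about the reverse inequality $\ell(S/I_n)\leq\ell(R/I_nR)$ (and the accompanying splitting claim) is unnecessary---only the forward bound is needed, exactly as in the paper, so you can drop that part.
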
 

\begin{proof} Using Lemma \ref{completereduction}, we may assume that $R$ and $M$ are complete.  Suppose asydepth $(M) \geq k$ over the complete local ring $R$.  By Cohen's structure theorem, $R$ is a module-finite extension of a ring $(S,n)$ that is either regular or of the form $S = \dfrac{V[[x_1, \ldots, x_d]]}{(p^sx_1)}$, where $p$ generates the maximal ideal of the complete discrete valuation ring $V$ and $s \geq 1$.  Every parameter ideal $I_n$ of $S$ is a parameter ideal of $R$, every finitely generated $R$-module is also a finitely generated $S$-module, and $H^i_R(I_n;M) = H^i_S(I_n;M)$ for all $i \in \Z$.  Because $\ell(R/I_nR) \leq \nu_S(R) \cdot \ell(S/I_n)$ for each $n \geq 1$, where $\nu_S(R)$ denotes the minimal number of generators of $R$ as an $S$-module, we have $\dfrac{\ell(H^i(I_n;M))}{\ell(S/I_n)} \leq \nu_S(R) \cdot \dfrac{\ell(H^i(I_n;M))}{\ell(R/I_nR)} \xrightarrow{n \rightarrow \infty} 0$ for all $i <k$, which is to say that asydepth $(M) \geq k$ over $S$.  By assumption, then, $\ell(H^i_n(M))<\infty$ for all $i<k$.  But $H^i_n(M) = H^i_m(M)$ for all $0 \leq i \leq d$, and so $\ell(H^i_m(M))<\infty$ for all $i<k$.
\end{proof}

\begin{theorem} \label{k=d-1}
Let $(R, m, \kappa)$ be a complete $d$-dimensional ring with that is either regular or Gorenstein of the form $\dfrac{V[[x_1, \ldots, x_d]]}{(p^sx_1)}$ where $V$  is a complete discrete valuation ring with maximal ideal $(p)$ and $s \geq 1$.  Let $M$ be a finitely generated $R$-module.  Then for all $0 \leq k \leq d$, asydepth($M$) $\geq k$ implies $\ell(H^i_m(M)) < \infty$ for all $i <k$.
\end{theorem}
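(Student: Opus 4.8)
The plan is to argue by contraposition, so suppose $\ell(H^i_m(M))=\infty$ for some $i<k$ and let $i_0<k$ be the least such index; we may assume $d\ge 2$, since for $d\le 1$ the module $H^0_m(M)=\Gamma_m(M)$ is a finitely generated submodule of $M$ killed by a power of $m$, hence of finite length. Put $N:=H^{i_0}_m(M)^\vee$, a finitely generated module with $\ell(N)=\ell(H^{i_0}_m(M))=\infty$, so $\dim N\ge 1$. First I would squeeze out of the spectral sequence behind Lemma \ref{SpecSeqLemma} — the one arising from applying $\Hom(\underline{\hspace{5mm}},\mathcal{A}^\bullet)$ to $K_\bullet(f_1,\dots,f_d;M)$, with $E_2$-term $H^p(f_1,\dots,f_d;H^q_m(M)^\vee)$ abutting (because $(f_1,\dots,f_d)$ is $m$-primary) to the Matlis duals of the $H^i(f_1,\dots,f_d;M)$ — not the upper bound it records but a lower bound on the corner term. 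Namely, $H^d(f_1,\dots,f_d;N)=N/(f_1,\dots,f_d)N$ lives in top Koszul degree $d$, so it supports no outgoing differential, and the differentials into it come from subquotients of $H^{d-r}(f_1,\dots,f_d;H^{i_0-r+1}_m(M)^\vee)$ with $2\le r\le i_0+1$; by minimality of $i_0$ each $H^{i_0-r+1}_m(M)$ has finite length, so each of these modules has length at most $\binom{d}{r}\ell(H^{i_0-r+1}_m(M))$, a bound independent of the parameters. Reading off $E_\infty$ and dualizing back,
\[
\ell\bigl(H^{i_0}(f_1,\dots,f_d;M)\bigr)\ \ge\ \ell\bigl(N/(f_1,\dots,f_d)N\bigr)-C,\qquad C:=\sum_{r=2}^{i_0+1}\binom{d}{r}\ell\bigl(H^{i_0-r+1}_m(M)\bigr),
\]
with $C$ depending only on $M$. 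Hence it suffices to produce parameter ideals $I_n\subseteq m^n$ with $\ell(N/I_nN)/\ell(R/I_n)$ bounded away from $0$: this forces $\ell(H^{i_0}(I_n;M))/\ell(R/I_n)\not\to 0$, contradicting that $M$ is $i_0$-effaceable, which is implied by asydepth$(M)\ge k>i_0$.

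Next I would replace $N$ by a one-dimensional cyclic module. Since $\dim N\ge 1$, the support $\Supp N$ contains a minimal prime of positive dimension, hence (specializing) a prime $\fq$ with $\dim R/\fq=1$ and $N_\fq\ne 0$. Then there is a nonzero $R$-linear map $N\to\operatorname{Frac}(R/\fq)$, whose image is a nonzero finitely generated $R/\fq$-submodule of $\operatorname{Frac}(R/\fq)$, hence isomorphic to a nonzero ideal $\fb$ of the one-dimensional complete local domain $R/\fq$; moreover $\fb$ is a quotient of $N$. Writing $C':=\ell\bigl((R/\fq)/\fb\bigr)<\infty$, the surjection $N\twoheadrightarrow\fb$ and the inclusion $\fb\hookrightarrow R/\fq$ give $\ell(N/I_nN)\ge\ell(\fb/I_n\fb)\ge\ell\bigl(R/(\fq+I_n)\bigr)-C'$ for every ideal $I_n$. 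So it is enough to find parameter ideals $I_n\subseteq m^n$ with $\ell\bigl(R/(\fq+I_n)\bigr)/\ell(R/I_n)\not\to 0$, i.e. to show that $R/\fq$ is not $d$-effaceable.

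The heart of the matter is this last claim for an arbitrary height-$(d-1)$ prime $\fq$, which I would reduce to the coordinate cases of Theorem \ref{basicsummarizedbighomology} by Noether-normalizing $R/\fq$ over a complete discrete valuation ring and staging the normalization inside $R$. When $R$ is regular ($R=\kappa[[x_1,\dots,x_d]]$ or $R=V[[x_1,\dots,x_{d-1}]]$) one picks inside $\fq$ elements that are part of a system of parameters of $R$ and that, together with $p$ when $p\in\fq$ and with one further general parameter, form a coordinate system for a complete subring $S\subseteq R$ of the same type (regular, resp.\ of the form $V[[\cdot]]$) over which $R$ is module-finite and which meets $\fq$ in exactly the corresponding coordinate prime of $S$ — the point being that $\fq\cap S$ is a prime of $S$ containing that coordinate prime and of the same height, and the only alternative, $\fq\cap S=m_S$, would force $\fq=m_R$. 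Theorem \ref{basicsummarizedbighomology} applied to $S$ supplies parameter ideals $J_n\subseteq m_S^n$ with $\ell_S\bigl(S/((\fq\cap S)+J_n)\bigr)/\ell_S(S/J_n)$ bounded away from $0$; set $I_n:=J_nR$, a parameter ideal of $R$ inside $m^n$. By miracle flatness $R$ is free over the regular ring $S$, say of rank $\mu$, and $R/\fq$ is free over the discrete valuation ring $S/(\fq\cap S)$, say of rank $r$; and since $R$, $S$, $S/(\fq\cap S)$ all have residue field $\kappa$, lengths of finite-length modules are unchanged on restriction of scalars. Therefore $\ell(R/I_n)=\mu\,\ell_S(S/J_n)$ and $\ell\bigl(R/(\fq+I_n)\bigr)=r\,\ell_S\bigl(S/((\fq\cap S)+J_n)\bigr)$, so $\ell\bigl(R/(\fq+I_n)\bigr)/\ell(R/I_n)=(r/\mu)\cdot\ell_S\bigl(S/((\fq\cap S)+J_n)\bigr)/\ell_S(S/J_n)$ does not tend to $0$. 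Feeding this back through the previous two paragraphs, and using $\ell(R/I_n)\to\infty$ to absorb $C$ and $C'$, yields $\ell\bigl(H^{i_0}(I_n;M)\bigr)/\ell(R/I_n)\not\to 0$, the desired contradiction, whence $\ell(H^i_m(M))<\infty$ for all $i<k$.

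The step I expect to be the main obstacle is precisely this reduction when $R=V[[x_1,\dots,x_d]]/(p^sx_1)$ is genuinely Gorenstein rather than regular: such an $R$ is \emph{not} module-finite over any $d$-dimensional regular local ring (modding out by $p$ does not lower dimension), so the Noether-normalization device cannot be run on $R$ itself. One must instead use that every height-$(d-1)$ prime $\fq$ of $R$ contains $(x_1)R$ or $(p)R$ and work through the regular quotients $R/(x_1)R=V[[x_2,\dots,x_d]]$ and $R/(p)R=\kappa[[x_1,\dots,x_d]]$, while carefully accommodating the fact that parameter ideals of $R$ lie in high powers of $m$ and hence contain neither $x_1$ nor $p$; this is exactly the configuration the mixed-characteristic coordinate cases of Theorem \ref{basicsummarizedbighomology} — those of the form $R/(p,x_1,\dots,x_k)$ and $R/(p,x_2,\dots,x_k)$ — are provided to handle, and making the transfer of the witnessing sequences back to $R$ precise in that case is the delicate technical point.
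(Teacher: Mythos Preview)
Your argument is essentially the paper's: both read off from the spectral sequence of Lemma~\ref{SpecSeqLemma} a lower bound $\ell(H^{i_0}(I;M))\ge \ell(N/IN)-C$ with $N=H^{i_0}_m(M)^\vee$ (the paper first replaces $M$ by $\syz^{d-k-1}M$ to force $i_0=d-1$, which your direct use of the lower bound bypasses), then pass to a cyclic quotient $R/P$ with $\dim R/P>0$ and reduce $P$ to a coordinate prime in a subring $S$ so that Theorem~\ref{basicsummarizedbighomology} applies. Your miracle-flatness comparison $\ell(R/I_nR)=\mu\,\ell(S/I_n)$ is cleaner than the paper's treatment of this transfer step, and your restriction to $\fq$ of height $d-1$ is exactly what makes $R/\fq$ free over the DVR $S/(\fq\cap S)$. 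For the Gorenstein ring $R=V[[x_1,\dots,x_d]]/(p^sx_1)$, however, the paper does \emph{not} descend to the regular quotients $R/(x_1)$ or $R/(p)$ as you sketch: it instead chooses the system of parameters so that one $y_i$ equals $x_1-p$ and forms a \emph{subring} $S=V[[x_1,y_2,\dots,y_d]]/(p^sx_1)$ (or the variant with $y_d=x_1-p$) of the same Gorenstein shape, taking $Q\subseteq S$ to be one of $(p,y_1,\dots,y_h)$, $(p,x_1,y_1,\dots,y_h)$, $(x_1,y_1,\dots,y_h)$ --- precisely the three families in Theorem~\ref{basicsummarizedbighomology}. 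Your freeness argument in fact carries over to this $S$: the regular covers $R'=V[[x_1,\dots,x_d]]\supseteq S'=V[[x_1,y_2,\dots,y_d]]$ have $R'$ free over $S'$ by miracle flatness, and since $p^sx_1\in S'$, quotienting both by this common element preserves freeness, so $R$ is free over $S$ and the same rank computation runs. The obstacle you flag thus dissolves once $S$ is built as a subring of $R$ rather than sought as a quotient.
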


\begin{proof}
Suppose that asydepth($M$) $\geq k$ but that $\ell(H^i_m(M)) = \infty$ for some $0 \leq k \leq d$, and assume that $k$ is minimal with respect to this property.   Notice that for each parameter ideal $I$, $H^{i+1}(I;\syz^1(M)) \cong H^i(I;M)$ and $H^{i+1}(\syz^1(M)) \cong H^i_m(M)$ for each $0 \leq i < d-1$, and so by replacing $M$ by $\syz^{d-k-1}(M)$, we have a counterexample when $k = d-1$ and $\ell(H^i_m(M))<\infty$ for all $0 \leq i <d-1$.  We now return to the spectral sequence from Lemma \ref{SpecSeqLemma} in order to improve the inequality.  Below is the possibly nonzero component of the $E_2$ page of the spectral sequence run by taking homology of columns first.  We omit maps, which will not be of interest to us. $$\CD 
@.               @.                                              @.                      @.              @.\\
@. H^0(I;\mbox{H}^d_n(M)^\vee) \hspace{0.7cm}@. H^0(I;\mbox{H}^{d-1}_n(M)^\vee) \hspace{0.7cm}@.\cdots @.\cdots @.\cdots @.\hspace{0.7cm} H^0(I;\mbox{H}^0_n(M)^\vee) @. \\ 
@. H^1(I;\mbox{H}^d_n(M)^\vee) \hspace{0.7cm}@. H^1(I;\mbox{H}^{d-1}_n(M)^\vee) \hspace{0.7cm}@.\cdots @. \cdots @.\cdots @.\hspace{0.7cm} \mathbf{H^1(I;\mbox{H}^0_n(M)^\vee)} @. \\ 
@.               @.                                               @.                      @.             @.\\
{}     @.        \vdots                                      @.        \vdots   @.     \vdots   @.\vdots   @.\vdots   @.    \vdots                      @. {} \\
@.               @.                                               @.                      @.             @.\\                
@. H^i(I;\mbox{H}^d_n(M)^\vee) \hspace{0.7cm}@. H^i(I;\mbox{H}^{d-1}_n(M)^\vee) \hspace{0.7cm}@.\cdots @.  \hspace{0.7cm} \mathbf{H^i(I;\mbox{H}^{i-1}_n(M)^\vee)}  \hspace{1cm} @.\cdots @.\hspace{1cm} H^i(I;\mbox{H}^0_n(M)^\vee) @. \\
@.               @.                                               @.                      @.              @.\\
{}     @.        \vdots                                      @.        \vdots   @.       \vdots   @.\vdots   @.\vdots   @.  \vdots                      @. {} \\
@.               @.                                               @.                      @.              @.\\                
@. H^d(I;\mbox{H}^d_n(M)^\vee) \hspace{0.7cm}@. \mathbf{H^d(I;\mbox{H}^{d-1}_n(M)^\vee)} \hspace{0.7cm}@.\cdots @.\cdots @.\cdots @. \hspace{0.7cm} H^d(I;\mbox{H}^0_n(M)^\vee) @. \\
@.               @.                                              @.                      @.              @.\\
      \endCD$$  
      
We may from here improve the result of Lemma \ref{goodhomologybound} in the case of $i = d-1$.  Note that $\ell(H^{d-1}(I;M))$ is computed as the sum of the lengths of modules on the $E_{\infty}$ page currently occupied in the grid above by the modules $H^{j}(I; H^{j-1}_m(M))$ for $0 \leq j \leq d$.  The appropriate diagonal is shown in bold above.  For $j<d$, we already have $\ell(H^{j-1}_m(M))<\infty$.  It follows that $H^d(I;H^{d-1}_m(M))$ only maps to and from modules of finite length on pages $E_i$ for $i>2$.  Therefore, controlling the lengths of the $H^{d-1}(I;M)$ is the same task as controlling the lengths of the $H^d(I;H^{d-1}_m(M))$.  More precisely, the task remaining to us is to show that if $\dim(H^{d-1}_m(M)) >0$, then there exists a sequence of parameter ideals $I_n$ such that $\dfrac{\ell(H^{d-1}_m(M)/I_nH^{d-1}_m(M))}{\ell(R/I_n)} \not\rightarrow 0$ as $I_n \rightarrow \infty$.  If $\dim(H^{d-1}_m(M)) >0$, then there is a map $H^{d-1}_m(M) \twoheadrightarrow R/P$ for some prime ideal $P$ with $\dim(R/P)>0$.  It is, therefore, sufficient to find a sequence of parameter ideals $I_n$ such that $\dfrac{\ell(R/(P+I_n))}{\ell(R/I_n)} \not\rightarrow 0$ as $I_n \rightarrow \infty$ for every prime ideal $P$ with $\dim(R/P)>0$.  

We now seek to reduce to the special cases of the above limits studied in Section 6.  Suppose that $P$ is height $h$.  Choose $y_1, \ldots, y_h \in R$ such that $\frac{y_1}{1}, \ldots, \frac{y_h}{1}$ form a system of parameters in $R_P$, and, using prime avoidance, extend to a system of parameters $y_1, \ldots, y_d$ in $R$.  In the mixed characteristic case in which $p$ is a parameter in $R$, choose $y_1 = p$ if $p \in P$ or $y_d = p$ if $p \notin P$.  (When extending to a full system of parameters, choose $y_d$ before choosing $y_j$ for $h <j<d$.)  Form the ring $S = \kappa[[y_1, \ldots, y_d]]$ in the equal characteristic case, $S = V[[y_2, \ldots, y_d]]$ in the mixed characteristic case with $p \in P$ and $p$ a parameter, and $S = V[[y_1, \ldots, y_{d-1}]]$ if $p \notin P$ and $p$ a parameter.  If $p$ is not a parameter in $R$ and $(x_1, p) \subseteq P$, take $y_1 = x_1-p$, and form $S = \dfrac{V[[x_1, y_2, \ldots, y_d]]}{(p^sx_1)}$.  If $(x_1, p) \not\subseteq P$, take $y_{d} = x_1-p$, and form $S = \dfrac{V[[x_1, y_1, y_2, \ldots, y_{d-1}]]}{(p^sx_1)}$.  (Again, choose $y_d$ before the $y_j$ for $h <j<d$ when necessary.)

In all cases we have the short exact sequence $0 \rightarrow S \rightarrow R \rightarrow C \rightarrow 0$ with $\ell(C)<\infty$.  Now for any parameter ideal $I$ of $S$, we have from the long exact sequence of Koszul homology, \[
0 \rightarrow H^{d-1}(I;C) \rightarrow S/I \rightarrow R/IR \rightarrow C/IC \rightarrow 0.
\]  Note that $\ell(H^{d-1}(I;C))< {d \choose 2} \ell(C)$ and $\ell(C/IC)<\ell(C)$ independent of $I$.  It follows that $\displaystyle \lim_{I_n \rightarrow \infty}\dfrac{\ell(R/I_nR)}{\ell(S/I_n)} \rightarrow 1$ for every sequence of parameter ideals $I_n \subseteq m^n$.

Now if $S$ is regular, set $Q = (y_1, \ldots, y_h) \subseteq S$.  In the Gorenstein case, set $Q = (p, y_1, \ldots, y_h)$ or $Q = (p, x_1, y_1, \ldots, y_h)$ or $Q = (x_1, y_1, \ldots, y_h)$ depending on whether $p \in P$ but $x_1 \notin P$, $p \in P$ and $x_1 \in P$, or $p \notin P$ but $x_1 \in P$, respectively.   Using the short exact sequence $0 \rightarrow S/Q \rightarrow R/P \rightarrow \bar{C} \rightarrow 0$, where $\bar{C}$ is a quotient of $C$ completing the short exact sequence, and the argument from the proceeding paragraph, we have $\displaystyle \lim_{I_n \rightarrow \infty}\dfrac{\ell(R/(I_n+P)R)}{\ell(S/(I_n+Q))} \rightarrow 1$ for every sequence of parameter ideals $I_n \subseteq m^n$.  We may, therefore, assume that $R$ is either regular or Gorenstein of the form described.  The result now follows from Propositions \ref{d=2} and \ref{d=2mixed}.  
\end{proof}

We are now prepared to prove the desired theorem.

\begin{theorem} \label{ACMLCM} If $M$ is a finitely generated module over the local ring $(R,m,\kappa)$ of with $\dim(M) = \dim(R) = d$, then the following three conditions are equivalent for each $0 \leq k \leq d$: 
\begin{enumerate}
\item asydepth($M$) $\geq k$,
\item  $\sup\{\ell(H^i(f_1, \ldots, f_d;M))\mid \sqrt{f_1, \ldots, f_d} = m \mbox{, } i< k\}<\infty$,
\item $\ell(H^i_m(M)) < \infty$ for all $i<k$.
\end{enumerate} 
\end{theorem}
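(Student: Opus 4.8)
The plan is to close the cycle of implications $(3)\Rightarrow(2)\Rightarrow(1)\Rightarrow(3)$, assembling the results already in place. The case $k=0$ is vacuous, so I assume $1\le k\le d$, and in particular $d\ge1$.

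For $(3)\Rightarrow(2)$ I would simply quote the first assertion of Theorem~\ref{backward}: finite length of $H^j_m(M)$ for all $j<k$ forces a uniform bound on $\ell(H^i(f_1,\dots,f_d;M))$ over all ideals $(f_1,\dots,f_d)$ generated by parameters on $M$ and all $i<k$. Since $\dim M=\dim R=d$, every $m$-primary $d$-generated ideal is in particular generated by parameters on $M$, so the supremum in~(2) is taken over a subfamily of the ideals handled by Theorem~\ref{backward}, and the desired bound follows at once. (After completing via Lemma~\ref{completereduction}, the spectral sequence of Lemma~\ref{SpecSeqLemma} even supplies the explicit bound $\sum_{j=0}^{i}{d\choose i-j}\,\ell(H^j_m(M)^\vee)$.)

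For $(2)\Rightarrow(1)$ the content is only that a bounded numerator over an unbounded denominator tends to $0$. Let $B$ be the supremum in~(2). Fix $i<k$ and a sequence of parameter ideals $I_n=(x_{1n},\dots,x_{dn})\subseteq m^n$; each $I_n$ satisfies $\sqrt{I_n}=m$, so $\ell(H^i(x_{1n},\dots,x_{dn};M))\le B$, while the surjection $R/I_n\twoheadrightarrow R/m^n$ gives $\ell(R/I_n)\ge\ell(R/m^n)\ge n\to\infty$ (using $d\ge1$). Hence $\ell(H^i(I_n;M))/\ell(R/I_n)\to0$, so $M$ is $i$-effaceable for all $i<k$, i.e.\ asydepth$(M)\ge k$.

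The substantive implication is $(1)\Rightarrow(3)$, and here the work is already packaged: Theorem~\ref{k=d-1} verifies exactly the hypothesis of Lemma~\ref{RegGorReduction}, namely that over a complete $d$-dimensional ring that is regular or Gorenstein of the form $V[[x_1,\dots,x_d]]/(p^sx_1)$, asydepth$(M)\ge k$ forces $\ell(H^i_m(M))<\infty$ for all $i<k$; Lemma~\ref{RegGorReduction} then transfers this conclusion to an arbitrary local ring $(R,m)$, completing the cycle. The only genuine obstacle sits inside this ingredient, i.e.\ inside Theorem~\ref{k=d-1}: after the syzygy shift reducing to the case $k=d-1$, one must exhibit, for every prime $P$ with $\dim(R/P)>0$, a sequence of parameter ideals $I_n\subseteq m^n$ with $\ell(R/(P+I_n))/\ell(R/I_n)\not\to0$, and --- after a finite-colength subring reduction to the regular/Gorenstein models --- the existence of such sequences is precisely the content of the Section~6 computations summarized in Theorem~\ref{basicsummarizedbighomology}. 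Everything else in the present statement is bookkeeping.
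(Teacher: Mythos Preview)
Your proof is correct and follows essentially the same route as the paper's own proof of Theorem~\ref{ACMLCM}: the paper also closes the cycle via Theorem~\ref{backward} for $(3)\Rightarrow(2)$, declares $(2)\Rightarrow(1)$ obvious, and obtains $(1)\Rightarrow(3)$ by combining Lemma~\ref{RegGorReduction} with Theorem~\ref{k=d-1}. Your additional unpacking of $(2)\Rightarrow(1)$ and of the internal structure of Theorem~\ref{k=d-1} is accurate but not a departure from the paper's argument.
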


\begin{proof}
It is obvious that (2) implies (1).  Using Lemma \ref{completereduction}, we may assume that $R$ and $M$ are complete.  Using Lemma \ref{RegGorReduction}, we may assume that $R$ is a complete ring with that is either regular or Gorenstein of the form $\dfrac{V[[x_1, \ldots, x_d]]}{(p^sx_1)}$ where $V$  is a complete discrete valuation ring with maximal ideal $(p)$ and $s \geq 1$.  The result now follows from Theorem \ref{backward} and Theorem \ref{k=d-1}.
\end{proof}

\begin{example}\label{badmultiplicity} We now give an application of Theorem \ref{ACMLCM}.  In particular, we give an example of a local ring $(R, m)$ showing that $\dfrac{\ell(R/I_n)}{e_{I_n}(R)}$ need not approach 1 as $n \rightarrow \infty$ for every sequence of parameter ideals $I_n \subseteq m^n$.  This example shows that Lech's limit formula does not generalize to the case of all parameter ideals in increasingly high powers of the maximal ideal.  Let $(R,m)$ be the localization of $(\frac{\kappa[x,y,z]}{x^3+y^3+z^3} \circledS \kappa[u,v])[w]$ at the homogeneous maximal ideal, where $\kappa$ is a field, and $\circledS$ denotes Segre product.  Because $R$ is normal, it is in particular $S_2$, and so its only nonvanishing Koszul cohomology modules are $i = 3,4$.  It is not Cohen--Macaulay on the punctured spectrum, and so $\ell(H^i_m(R)) = \infty$ for some $i<d$, and so $R$ is not asymptotically Cohen--Macaulay by Theorem \ref{ACMLCM}.  Therefore, we may pick a sequence of parameter ideals $I_n$ such that $\dfrac{H^3(I_n;R)}{\ell(R/I_n)} \not\rightarrow 0$.  It follows that for that sequence $\dfrac{e_{I_n}(R)}{\ell(R/I_n)} = \dfrac{H^4(I_n;R)-H^3(I_n;R)}{{H^4(I_n;R)}} \not\rightarrow 1$.
\end{example}

Because we do an explicit computation of the only non-zero homology modules below, we do not exactly think of this example as an application of the theorem.  But the computation is certainly part of our understanding of the theorem, and it is a particularly interesting example of the computation of the ratio $\dfrac{e_{I_n}(M)}{\ell(M/I_nM)}$.  
\begin{example}
Let $R = \kappa[[x,y,z]]$ and $M = (x, y)R$.  Note that depth$_m(M) = 2$, and so $H^i(I;M) = 0$ for $i = 0, 1$ for every parameter ideal $I$.  Because $M$ is generated by 2 elements, $\ell(M/IM) \leq 2 \cdot \ell(R/I)$ for every $m$-primary ideal $I$.  Let $I_n = (z^{n^4}-z^nx^n,y^n-z^nx,x^{n+1}-xz^{n^4-n}+yz^n)$, as in Proposition \ref{basecase}.  Using the computation in Proposition \ref{basecase}, we compute \begin{align*}
0 &\leq \lim_{n \rightarrow \infty} \dfrac{e_{I_n}(M)}{\ell(M/I_nM)} =  \lim_{n \rightarrow \infty}\dfrac{\ell(M/I_nM)-\ell(H^2(I_n;M))}{\ell(M/I_nM)} = 1-\lim_{n \rightarrow \infty}\dfrac{\ell(H^2(I_n;M))}{\ell(M/I_nM)} \\&
\leq 1-\lim_{n \rightarrow \infty}\dfrac{\ell(H^2(I_n;M))}{2 \cdot \ell(R/I_n)} = 1- \lim_{n \to\infty}  \dfrac{n^4}{2(n^4+2n+(2n+1)^2(3n))}  = \dfrac{1}{2}.
\end{align*}
\end{example}

We pose two questions for further study about possible generalizations of Lech's limit formula and related asymptotic vanishing of Koszul cohomology.

\begin{question}
For a fixed local ring $(R, m, \kappa)$ and finitely generated $R$-module $M$, which sequences of parameter ideals $I_n \subseteq m^n$ satisfy $\displaystyle \lim_{n \rightarrow \infty}\dfrac{\ell(e_{I_n}(M))}{\ell(M/I_nM)} = 1$?
\end{question}

\begin{question}
For a fixed local ring $(R, m, \kappa)$ and finitely generated $R$-module $M$ with $\dim(R) = \dim(M)$, which sequences of parameter ideals $I_n \subseteq m^n$ satisfy $\displaystyle \lim_{n \rightarrow \infty}\dfrac{\ell(H^i(I_n;M))}{\ell(R/I_n)} = 0$ for all $0 \leq i <\dim(M)$?
\end{question}
      
\section{An alternative approach} 

We will begin by showing that if a module $M$ is asymptotically Cohen--Macaulay, then $M$ is Cohen--Macaulay on the punctured spectrum.  Our first step will be to show that if $M$ is asymptotically Cohen--Macaulay, then $M/H^0_m(M)$ must be torsion-free.  Once we have that $M/H^0_m(M)$ is torsion-free, we show that we may quotient by a non-unit non-zerodivisor and preserve the asymptotically Cohen--Macaulay property, at which point we are prepared to show that $M$ is Cohen--Macaulay on the punctured spectrum by induction.  Lastly, we will show by taking syzygies over an appropriately chosen Cohen--Macaulay ring that this equivalence gives all of Theorem \ref{main}.  

\begin{definition}
Let $R$ be a ring and $M$ an $R$-module.  We call $M$ \emph{torsion-free} if, for each $m \in M$ and $r \in R$, whenever $rm = 0$, $m = 0$ or $r \in P$ for some $P \in \Ass(R)$.  
\end{definition}

\begin{lemma} \label{torsionlemma} Suppose $(R,m,\kappa)$ is a complete regular local ring of dimension  $d \geq 2$ or a Gorenstein ring of the form $V[[x_1, \ldots, x_{d}]]/(p^sx_1)$ with $d \geq 2$ and $s \geq 1$ where $V = (V, pV, \kappa)$ is a complete discrete valuation ring. If $M$ is dimension $d$ and asymptotically Cohen--Macaulay, then $M/H^0_m(M)$ is torsion-free. 
\end{lemma}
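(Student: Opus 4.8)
The plan is to prove the contrapositive: if $M/H^0_m(M)$ is not torsion-free, then $M$ is not asymptotically Cohen--Macaulay. Set $M':=M/H^0_m(M)$; since $H^0_m(M)$ has finite length we have $H^0_m(M')=0$, so $m\notin\Ass(M')$, and the short exact sequence $0\to H^0_m(M)\to M\to M'\to 0$ gives $H^e_m(M)\cong H^e_m(M')$ for every $e\geq 1$. Because $R$ is Cohen--Macaulay we have $\Ass(R)=\Min(R)$, so non-torsion-freeness of $M'$ produces a prime $P\in\Ass(M')$ that is not minimal, and $P\neq m$ since $\depth(M')\geq 1$. Writing $c=\operatorname{ht}(P)$ and $e=\dim(R/P)$, catenarity and equidimensionality of $R$ give $c+e=d$ and $1\leq c,e\leq d-1$. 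As $P\in\Ass(M')$, the localization $M'_P$ has depth $0$ over the Gorenstein local ring $R_P$ of dimension $c$, so local duality over $R_P$ gives $\Ext^c_{R_P}(M'_P,R_P)\neq 0$, i.e. $P\in\Supp\Ext^c_R(M',R)$, whence $\dim\Ext^c_R(M',R)\geq e\geq 1$. Since $R$ is Gorenstein of dimension $d$, local duality over $R$ identifies $\Ext^c_R(M',R)$ with $H^{d-c}_m(M')^\vee=H^e_m(M')^\vee$, which therefore is not of finite length; hence $\ell(H^e_m(M))=\ell(H^e_m(M'))=\infty$ with $e<d$. By Theorem \ref{ACMLCM} (the equivalence of (1) and (3) with $k=d$), $M$ is not asymptotically Cohen--Macaulay, as desired.

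Section 5 is meant to give a proof of the main theorem independent of the spectral-sequence argument of Section 4, so one really wants a proof of this lemma that does not appeal to Theorem \ref{ACMLCM} — presumably this is why Theorems \ref{basicsummarizedbighomology} and \ref{summarizedbighomology} are cited here. For that route I would keep the reduction to $H^0_m(M')=0$ and the production of a non-minimal, non-maximal prime $P\in\Ass(M')$, equivalently an inclusion $R/P\hookrightarrow M'$ with $e:=\dim(R/P)\in[1,d-1]$, and then run the module-finite subring reduction from the proof of Theorem \ref{k=d-1}: choose $y_1,\dots,y_c$ generating a system of parameters of $R_P$, extend by prime avoidance (with the mixed-characteristic bookkeeping around $p$ exactly as there) to a system of parameters $y_1,\dots,y_d$ of $R$, and form the module-finite subring $S\subseteq R$, regular or of the designated Gorenstein form, with $Q=(y_1,\dots,y_c)S$ (or its analogue). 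Then $S/Q$ is one of the modules treated in Theorems \ref{basicsummarizedbighomology}/\ref{summarizedbighomology}, $\dim_S(S/Q)=e$, and the cokernel of $S/Q\hookrightarrow R/P$ has finite $S$-length; composing gives an injection $N_0:=S/Q\hookrightarrow M$ of $S$-modules. Since $S$-lengths and $R$-lengths of a given module differ only by the bounded factor $\nu_S(R)$ (as in Lemma \ref{RegGorReduction}), it suffices to contradict asymptotic Cohen--Macaulayness of $M$ over $S$.

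From $0\to N_0\to M\to M/N_0\to 0$ and a parameter ideal $J$ of $S$ one reads off, in the lowest degree $e$ at which $N_0$ has Koszul cohomology,
\[
H^{e-1}(J;M/N_0)\longrightarrow H^e(J;N_0)\longrightarrow H^e(J;M),
\]
and Theorems \ref{basicsummarizedbighomology}/\ref{summarizedbighomology} furnish a sequence of parameter ideals $J_n$ with $\ell(H^e(J_n;N_0))/\ell(S/J_n)\not\to 0$. When $e=1$ the error term is $H^0(J_n;M/N_0)\subseteq H^0_m(M/N_0)$, which has finite length independent of $n$, so $\ell(H^1(J_n;M))/\ell(S/J_n)\not\to 0$, $M$ is not $1$-effaceable, and again $M$ is not asymptotically Cohen--Macaulay.

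The main obstacle is the case $e\geq 2$: the error term $H^{e-1}(J_n;M/N_0)$ is not visibly $o(\ell(S/J_n))$, and a direct length chase in the long exact sequence is circular, because the torsion submodule genuinely carries Koszul cohomology of size comparable to $\ell(S/J_n)$ — exactly the phenomenon quantified in Section 6 — which could in principle be absorbed into $M/N_0$. I expect one handles this by an induction that peels the regular sequence $y_1,\dots,y_c$ acting on $N_0$ off one element at a time, using at each stage both families of modules appearing in Theorem \ref{summarizedbighomology} (the ideal-type modules $(y_1,\dots,y_{d-h})S$ and the quotient-type modules $S/(y_1,\dots,y_{d-h})S$), together with the degree shift $H^{i+1}(I;\syz^1(-))\cong H^i(I;-)$ and the spectral sequence of Lemma \ref{SpecSeqLemma}, which confines the remaining differentials into the surviving term to higher local cohomology modules whose Koszul cohomology at the relevant degrees can be controlled for the explicit ideals $J_n$. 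In every case one extracts some $j<d$ and a sequence of parameter ideals on which $\ell(H^j(J_n;M))/\ell(S/J_n)\not\to 0$, contradicting the assumption that $M$ is asymptotically Cohen--Macaulay; hence $M/H^0_m(M)$ is torsion-free.
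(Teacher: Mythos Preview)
Your first argument, via local duality and Theorem \ref{ACMLCM}, is correct as written (the rings in question are complete, Gorenstein, equidimensional, and catenary, so the height--codimension bookkeeping and the identification $\Ext^c_R(M',R)\cong H^e_m(M')^\vee$ go through). But, as you note, it is circular for Section~5: the whole point of this lemma is to serve as the base of an argument that eventually reproves Theorem \ref{ACMLCM}, so invoking that theorem here is not permitted.

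In your second approach you correctly isolate the difficulty: an injection $N_0=S/Q\hookrightarrow M$ only yields an exact sequence
\[
H^{e-1}(J_n;M/N_0)\to H^e(J_n;N_0)\to H^e(J_n;M),
\]
and for $e\geq 2$ there is no a~priori control on the leftmost term. Your speculation about peeling off one parameter at a time with syzygy shifts and the spectral sequence does not work as stated; there is no mechanism in that sketch that prevents the large cohomology of $N_0$ from being killed inside $H^e(J_n;M)$, and indeed the Section~6 computations show the torsion piece really does carry cohomology of the same order as $\ell(S/J_n)$.

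The paper's resolution is a genuinely different idea that you are missing: rather than using the injection, one arranges for the bad module to be a \emph{direct summand} of $M$ over a suitably chosen module-finite subring $A$. Concretely, after reducing to $H^0_m(M)=0$ and picking $P$ minimal in the support of the torsion submodule, one takes $\bar S=S/(P\cap S)\hookrightarrow M$ and passes to an essential extension $\bar S\hookrightarrow N:=M/M'$ (with $M'$ maximal disjoint from $\bar S$). Artin--Rees plus essentiality force $y_i^{k_i}N=0$ for $i\le h$, so $N$ lives over the $(d-h)$-dimensional regular (or DVR-like) ring $\bar A$. In equal characteristic one then identifies $N$ with $\bar A[1/f]$ for some $f$, and by passing to $A=\kappa[[y_1,\dots,y_h,y_{h+1}^t,\dots,y_d^t]]$ with $t\gg 0$ makes $f$ part of a free $A$-basis, so the map $\bar A\hookrightarrow N$ \emph{splits} over $A$; the mixed-characteristic cases use a similar freeness/minimal-generator trick over a DVR localization. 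Lifting the retraction gives $M\cong \bar A\oplus_A(\text{something})$ (or $B_t\oplus_A(\text{something})$). Now Koszul cohomology splits over direct sums, so
\[
\frac{\ell(H^{d-h}(I_n;M))}{\ell(A/I_n)}\ \geq\ \frac{\ell(H^{d-h}(I_n;\bar A))}{\ell(A/I_n)},
\]
and the right side is handled by the explicit computations of Section~6 (Theorem \ref{summarizedbighomology}). There is no error term at all. This splitting-over-a-smaller-subring step is the missing ingredient in your attempt.
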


\begin{proof} Because $H^0_m(M)$ is finite length, either $M$ and $M/H^0_m(M)$ are both asymptotically Cohen--Macaulay or neither is because the difference between any $\ell(H^i(I_n;M))$ and $\ell(H^i(I_n;M/H^0_m(M)))$ is bounded by $({d \choose i}+{d \choose i-1}) \cdot \ell(H^0_m(M))$ from the long exact sequence of Koszul cohomology while $\ell(R/I_n) \rightarrow \infty$.  We may, therefore, replace $M$ by $M/H^0_m(M)$.  Suppose $T \neq 0$ is the torsion submodule of $M$.  $T$ cannot be supported only at the maximal ideal because then $T \subseteq H^0_m(M) = 0$.  Let $P \subseteq R$, a prime ideal of height $h<d$, be minimal in Supp($T$).  We will show that $M$ is not $(d-h)$-effaceable.  

First suppose that $R$ is regular, in which case $R = \kappa[[x_1, \ldots, x_d]]$ or $R = V[[x_2, \ldots, x_{d}]]$ where $V = (V,p,k)$ is a complete discrete valuation ring.    Let $(y_1, \ldots, y_h)$ or $(p, y_1, \ldots, y_{h-1})$ be local generators of $P$ that extend to a system of parameters $y_1, \ldots, y_d$ or $p, y_2, \ldots, y_{d}$ of $R$, and form the regular ring $S = \kappa[[y_1, \ldots, y_d]]$ if $R$ is equal characteristic or $S = V[[y_2, \ldots, y_{d}]]$ if $R$ is mixed characteristic. (By clearing denominators, we may without loss of generality assume that the local generators of $P$ are elements of $R$.)  This can be done in equal characteristic by prime avoidance by choosing $y_1$ a minimal generator of $P$ and each $y_i$ for $2 \leq i \leq h$ a minimal generator of $P$ not in the minimal primes of $(y_1, \ldots, y_{i-1})$.  In the mixed characteristic regular ring case, we choose each $y_i$ to avoid $(p, y_2, \ldots, y_{i-1})$ and the minimal primes of $((p, y_2, \ldots, y_{i-1})+P^2)R_P \cap R$ for $2 \leq i \leq h$ so that $p$ will also be a parameter in $S$.  In the Gorenstein case, as in Theorem \ref{k=d-1}, we will find appropriate $y_1, \ldots, y_h$ to be a system of parameters of $R_P$.  If $(x_1, p) \subseteq P$, take $y_1 = x_1-p$, choose the remaining $y_i$ by prime avoidance as above, and form $S = \dfrac{V[[x_1, y_2, \ldots, y_d]]}{(p^sx_1)}$.  If $(x_1, p) \not\subseteq P$, take $y_{d} = x_1-p$, choose the remaining $y_i$ by prime avoidance, and form $S = \dfrac{V[[x_1, y_1, y_2, \ldots, y_{d-1}]]}{(p^sx_1)}$. Because $P$ was a minimal prime of $T$, we have $S/(P\cap S) \hookrightarrow R/P \hookrightarrow T \hookrightarrow M$.  More concretely, we have $\bar{S} = S/(P \cap S) = \kappa[[y_{h+1}, \ldots, y_d]] \hookrightarrow M$ or $\bar{S} = S/(P \cap S) = V[[y_{h+1}, \ldots, y_{d-1}]]$.  Note that in the Gorensein case, we must have $x_1 \in P$ or $p \in P$ so that, by our construction, $S/(P \cap S)$ will always be regular.  

We aim to use these injections to split off, as a direct summand of $M$ over a smaller regular or Gorenstein ring, a torsion module of the form studied in Section 6.  Let $M'$ be a maximal $S$-submodule of $M$ disjoint from $\bar{S}$ and $N=M/M'$.  Then $\bar{S} \hookrightarrow N$ is an essential extension, and a retraction of the inclusion of $\bar{S}$ into $N$ lifts to a retraction of the map to $M$.  

We begin with the equal characteristic case.  For each $i \leq h$, there exists $k \geq 1$ such that $(y_i)^kN \cap \kappa[[y_{h+1}, \ldots, y_d]] = (y_i)^{n-k}(y_i^kN \cap \kappa[[y_{h+1}, \ldots, y_d]]) = 0$ by the Artin-Rees Lemma.  But $(y_i)^kN \cap \kappa[[y_{h+1}, \ldots, y_d]] = 0$ implies that $(y_i)^kN = 0$ because the extension is essential.  Therefore, after replacing each $y_i$ with $y_i^{k_i}$, we may assume without loss of generality that $y_iN = 0$ for all $i \leq h$ and view $N$ as a module over $\kappa[[y_{h+1}, \ldots, y_d]]$.  Now because Frac($\kappa[[y_{h+1}, \ldots, y_d]]$) is a maximal essential extension of $\kappa[[y_{h+1}, \ldots, y_d]]$, we may view $N$ as a finitely generated submodule of Frac($\kappa[[y_{h+1}, \ldots, y_d]]$), i.e. $N \cong \kappa[[y_{h+1}, \ldots, y_d]][\frac{1}{f}]$ for some $f \in \kappa[[y_{h+1}, \ldots, y_d]]$.  Equivalently, the essential extension we have been studying may be described as $\kappa[[y_{h+1}, \ldots, y_d]] \xrightarrow{f} \kappa[[y_{h+1}, \ldots, y_d]]$.  Choosing $t\gg0$, we may take $f$ to be part of a basis of $\kappa[[y_{h+1}, \ldots, y_d]]$ over $\kappa[[y_{h+1}^t, \ldots, y_d^t]]$, which means that our map splits as a map of $\kappa[[y_{h+1}^t, \ldots, y_d^t]]$-modules and so as a map of $A = \kappa[[y_1, \ldots, y_h, y_{h+1}^t, \ldots, y_d^t]]$-modules.  

In mixed characteristic, we consider the case of $p \in P$ and $p \notin P$ separately.  We first assume $p \notin P$. Call $\bar{A} = V[[y_{h+1}, \ldots, y_{d-1}]]$.  Because $\bar{A}$ injects into $M$, $\bar{A}_{(p)}$ injects into $M_{(p)}$.  Because $\bar{A}_{(p)}$ is a discrete valuation ring, $M_P$ must be free over $\bar{A}_{(p)}$, and so we may choose an element $u$ of a free basis of $M_{(p)}$ over $\bar{A}_{(p)}$ and note that $u \notin pM_{(p)}$.  Then $\bar{A}_{(p)}\hookrightarrow M_{(p)}$ given by $1 \mapsto u$ is a splitting.  Now because $\Hom(M_{(p)}, \bar{A}_{(p)}) \cong \Hom(M, \bar{A})_{(p)}$, the retraction $M_{(p)} \twoheadrightarrow \bar{A}_{(p)}$ with $u \rightarrow 1$ gives a map $\alpha:M \rightarrow \bar{A}$ with $\alpha(u) \notin (p)\bar{A}$.  Therefore, there exists a map $\theta: M \hookrightarrow F$ where $F$ is a free $\bar{A}$ module and $\theta(u) \notin (p)F$.  Now because $\bigcap_t (p, y_{h+1}^t, \ldots, y_{d-1}^t)F = (p)F$ and $\theta(u) \notin (p)F$, we may choose $t$ sufficiently large that $\theta(u) \notin (p, y_{h+1}^t, \ldots, y_{d-1}^t)F$, which is to say that $\theta(u)$ is not in the maximal ideal of $B = V[[y_{h+1}^t, \ldots, y_{d-1}^t]]$ expanded to $F$.  Because $F$ is free over $\bar{A}$ and $\bar{A}$ is free over $B$, $F$ is free over $B$.  It follows that there is a retraction $F \twoheadrightarrow \bar{A}$ as $B$ modules.  Composing with a retraction $\bar{A} \twoheadrightarrow B$ and restriction to $M$, we obtain a splitting of $B \hookrightarrow M$ as $B$-modules, which gives a splitting as $A = \dfrac{V[[x_1, y_1, \ldots, y_h, y_{h+1}^t, \ldots, y_{d-1}^t]]}{(p^sx_1)}$-modules.  

Lastly, we suppose $p \in P$, in which case $\bar{A} = \kappa[[y_{h+1}, \ldots, y_{d}]]$, thinking of $y_d = x_1-p$ when $x_1 \notin P$.  Fix $k$ so that $p^kM = 0$ but $p^{k-1}M \neq 0$.  As in the  previous cases, we replace each $y_i$ with some $y_i^{k_i}$ for $2 \leq i < h$ so that for each such $i$, $y_iM = 0$, and think of $M$ as a module over $B = \dfrac{V[[x_1, y_{h+1}, \ldots, y_{d-1}]]}{(p^k, p^sx_1)}$ in the Gorenstein case if $x_1 \notin P$ or over $B = \dfrac{V}{(p^k)}[[y_{h+1}, \ldots, y_{d}]]$ in the regular case or in the Gorenstein case if $x_1 \in P$.  For each $t \geq 0$, set $B_t = \dfrac{V}{(p^k)}[[y_{h+1}^t, \ldots, y_{d}^t]]$ or $B_t = \dfrac{V[[x_1^t, y_{h+1}^t, \ldots, y_{d-1}^t]]}{(p^k, p^sx_1^t)}$ as is appropriate to the case.  We aim to find a $t$ such that a copy of $B_t$  splits from $M$ as a $B_t$-module.   Because $B_{(p)}$ is a $0$-dimensional Gorenstein ring, it splits from $M_{(p)}$.  As in the previous case, this gives a map $\alpha:M \to B$ with an element $u \in M$ such that $\alpha(u) \notin (p)B$. Again, choose $t$ sufficiently  large that $\alpha(u) \notin (p, y_{h+1}^t, \ldots, y_{d}^t)B$ or $\alpha(u) \notin (p, x_1^t, y_{h+1}^t, \ldots, y_{d-1}^t)B$ as is appropriate to the case.   Now $B$ is free over $B_t$  and $\alpha(u)$ is not in the expansion of the maximal ideal of $B_t$ to $B$, so there is a $B_t$ module map $B \to B_t$ such that $\alpha(u) \mapsto 1$, and so the composite map  $M \to B \to B_t$  sends $u$ to $1$, which gives a splitting of $B_t$ from $M$ as a $B_t$-module.  This map is also a splitting over $A = V[[y_2, \ldots, y_h, y_{h+1}^t, \ldots, y_d^t]]$ in the regular case, over $A = \dfrac{V[[x_1, y_2, \ldots, y_h, y_{h+1}^t, \ldots, y_d^t]]}{(p^sx_1)}$ in the Gorenstein case when $x_1 \in P$, and over $A = \dfrac{V[[x_1^t, y_2, \ldots, y_h, y_{h+1}^t, \ldots, y_{d-1}^t]]}{(p^sx_1^t)}$ in the Gorenstein case when $x_1 \notin P$.

We now have a module of the form of of Theorem \ref{d>2} or Proposition \ref{d=2} as a direct summand of $M$ over a Gorenstein ring, which have named $A$ in each case.  Because $R$ is module finite over $A$, it is sufficient to find a sequence of parameter ideals $I_n$ in $A$ such that $\displaystyle \lim_{n \to\infty} \dfrac{\ell(H^{d-h}(I_n;M))}{\ell(R/I_nR)} \neq 0$.  Because $\ell(R/I_nR) \leq \nu_A(R) \cdot \ell(A/I_n)$, it is sufficient to show that $\displaystyle \lim_{n \to\infty} \dfrac{\ell(H^{d-h}(I_n;M))}{\ell(A/I_nA)} \neq 0$.  But $M \cong \bar{A} \oplus_A N$ or $M \cong B_t \oplus_A N$ for some $A$-module $N$, as is appropriate to the case, and Koszul homology splits over direct sums, so, for each $n \geq 1$, $\dfrac{\ell(H^{d-h}(I_n;M))}{\ell(A/I_n)}  \geq \dfrac{\ell(H^{d-h}(I_n;\bar{A}))}{\ell(A/I_n)}$ or $\dfrac{\ell(H^{d-h}(I_n;M))}{\ell(A/I_n)}  \geq \dfrac{\ell(H^{d-h}(I_n;B_t))}{\ell(A/I_n)}$, as is appropriate to the case.  But by Theorem \ref{d>2} and Proposition \ref{d=2},  $\displaystyle \lim_{n \to\infty} \dfrac{\ell(H^{d-h}(I_n;\bar{A}))}{\ell(A/I_n)} \neq 0$ and  $\displaystyle \lim_{n \to\infty} \dfrac{\ell(H^{d-h}(I_n;B_t))}{\ell(A/I_n)} \neq 0$, and so $\displaystyle \lim_{n \to\infty} \dfrac{\ell(H^{d-h}(I_n;M))}{\ell(A/I_n)} \neq 0.$  That is to say that $M$ is not $(d-h)$-effaceable and, in particular, is not asymptotically Cohen--Macaulay, a contradiction.

\end{proof} 

\begin{lemma}\label{modx} 
Let $(R,m,\kappa)$ be a Cohen--Macaulay local ring, $M$ a finitely generated $R$-module with $\dim(M) = \dim(R)$, and $x$ a non-unit of $R$ not a zerodivisor on $M$.  If $M$ is asymptotically Cohen--Macaulay over $R$, then $M/xM$ is asymptotically Cohen--Macaulay over $(R/(x),\mu, \kappa)$.  
\end{lemma}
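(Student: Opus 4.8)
The plan is to derive the statement from the cohomological characterization of asymptotic depth in Theorem \ref{ACMLCM}, feeding it through the long exact sequence in local cohomology attached to multiplication by $x$. First I would apply Theorem \ref{ACMLCM} to $M$ over $R$ with $k=d:=\dim(M)=\dim(R)$: since $M$ is asymptotically Cohen--Macaulay, $\ell(H^i_m(M))<\infty$ for all $i<d$. A routine dimension count then gives $\dim(M/xM)=\dim(R/(x))=d-1$ — here using that $R$ is Cohen--Macaulay, hence equidimensional, together with $\dim(M)=\dim(R)$ and that $x$ is a nonzerodivisor on $M$ (in the setting where this lemma is applied $x$ is in fact a nonzerodivisor on $R$) — which is what is needed to invoke Theorem \ref{ACMLCM} over $\bar R:=R/(x)$.

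Now for the main step: apply $H^\bullet_m(-)$ to $0\to M\xrightarrow{\ x\ }M\to M/xM\to 0$ and split the resulting long exact sequence into the short exact sequences
\[
0\longrightarrow H^i_m(M)/xH^i_m(M)\longrightarrow H^i_m(M/xM)\longrightarrow \bigl(0:_{H^{i+1}_m(M)}x\bigr)\longrightarrow 0 .
\]
Because $x$ annihilates $M/xM$ we may read $H^i_m(M/xM)$ as $H^i_\mu(M/xM)$, local cohomology over $\bar R$. For every $i<d-1$ both $i$ and $i+1$ are $<d$, so $H^i_m(M)$ and $H^{i+1}_m(M)$ are finite length, whence $H^i_\mu(M/xM)$ is finite length for every $i<d-1$. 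Applying Theorem \ref{ACMLCM} once more — over $\bar R$, with $k=d-1=\dim(M/xM)=\dim(\bar R)$, in the direction $(3)\Rightarrow(1)$ — yields asydepth$(M/xM)\ge d-1$ over $\bar R$, i.e.\ $M/xM$ is asymptotically Cohen--Macaulay over $\bar R$.

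The substantive ingredient is Theorem \ref{ACMLCM} itself; granting it, the proof is just the elementary local-cohomology bookkeeping above, the only point of care being the dimension equality $\dim(M/xM)=\dim(R/(x))$ that makes the theorem applicable on the quotient (this is exactly where Cohen--Macaulayness of $R$ is used). A proof that does not quote Section 4 would instead transport Koszul cohomology directly through the same exact sequence: a parameter ideal $(\underline{g})\bar R$ on $M/xM$ lifts to a parameter ideal $(x,\underline{g})$ on $M$ with $\ell(R/(x,\underline{g}))=\ell(\bar R/(\underline{g})\bar R)$ and $H^i((\underline{g});M/xM)\cong H^{i+1}((x,\underline{g});M)$ (since $x$ is a nonzerodivisor on $M$), so that asymptotic smallness of Koszul cohomology for $M$ passes to $M/xM$; the obstacle there is that $(x,\underline{g})$ need not lie in a high power of $m$ even when $(\underline{g})\bar R\subseteq\mu^n$, which forces one to work with $(x^e,\underline{g})$ and to control $\ell(H^\bullet((\underline{g});M/x^eM))$ — and here Cohen--Macaulayness of $R$ re-enters through the fact that a system of parameters is a regular sequence, giving the clean length identity $\ell(R/(x^e,\underline{g}))=e\cdot\ell(R/(x,\underline{g}))$.
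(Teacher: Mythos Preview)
Your primary argument is correct as a standalone proof: granted Theorem~\ref{ACMLCM}, the short exact sequence in local cohomology immediately transfers finite-length local cohomology from $M$ to $M/xM$, and then (3)$\Rightarrow$(1) of Theorem~\ref{ACMLCM} over $\bar R$ finishes. You are also right to flag the one genuine hypothesis-check, namely $\dim(R/(x))=d-1$; the paper's own proof uses this implicitly as well (it lifts a $(d-1)$-generated parameter ideal $\bar J$ of $R/(x)$ and treats $J+(x^t)$ as a parameter ideal of $R$), and in every application in the paper $x$ is indeed a nonzerodivisor on $R$.

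However, your route is \emph{not} the paper's, and the difference matters. Lemma~\ref{modx} sits in Section~5, whose whole purpose is to give a proof of the main theorem independent of Section~4; it feeds into Theorem~\ref{forward}, then Theorem~\ref{topdim}, then Corollary~\ref{forallk}. Invoking Theorem~\ref{ACMLCM} here is not logically circular---Section~4 does not use Lemma~\ref{modx}---but it collapses Section~5 into a corollary of Section~4 and forfeits the extra information (torsion-freeness of $M/H^0_m(M)$, behavior under quotient by a nonzerodivisor, the rigidity of Corollary~\ref{rigidity}) that the direct approach is meant to exhibit.

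The paper's proof is precisely the direct argument you sketch in your final paragraph, carried out in full. It first reduces, via syzygies over the Cohen--Macaulay ring $R$, to the single index $i=d-2$. For a parameter ideal $\bar J\subseteq\mu^N$ with $(d-1)$-generator lift $J$, it compares $\ell(H^{d-2}(\bar J;M/xM))=\ell(H^{d-1}(J+(x);M))$ against $\ell(H^{d-1}(J+(x^t);M))/t$ for $t\gg 0$, using on one side the crude bound $\ell(H^{d-1}(J+(x);M))\le \ell(\syz^1(M)/(J+(x))\syz^1(M))$ and on the other a lower bound coming from $e_{(x)}(\syz^1(M)/J\syz^1(M))$ and the St\"uckrad--Vogel constant $c_{\syz^1(M)}$ of \cite{joint}. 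The Cohen--Macaulay hypothesis enters exactly where you predicted: in the length comparison $\ell(R/(J+(x^t)))\le t\cdot\ell(R/(J+(x)))$ and in the identifications $H^{d-2}(J;M)\cong \syz^1(M)/J\syz^1(M)$. What your sketch does not anticipate is that equidimensionality of $\syz^1(M)$---needed to invoke the St\"uckrad--Vogel bound---is itself deduced from asymptotic Cohen--Macaulayness via Lemma~\ref{torsionlemma}, so the direct proof genuinely interleaves with the rest of Section~5 rather than standing alone.
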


\begin{proof}
Fix $R$, $M$ and $x$ as in the theorem statement.  By $\ref{completereduction}$, we may assume that $R$ and $M$ are complete.  Using the long exact sequence for Koszul cohomology coming from the short exact sequence $0 \rightarrow \syz^1(M) \rightarrow R^h \rightarrow M \rightarrow 0$, we have $0 \rightarrow H^i(I;M) \rightarrow H^{i+1}(I;\syz^1(M)) \rightarrow 0$ for every $i <d-1$ and every parameter idea l$I$ of $R$.  It follows that $\syz^1(M)$ is asymptotically Cohen--Macaulay whenever $M$ is.  Similarly, from $0 \rightarrow \syz^1(M/xM) \rightarrow (R/(x))^g \rightarrow M/xM \rightarrow 0$ with $g \leq h$, we have $0 \rightarrow H^i(J;M/xM) \rightarrow H^{i+1}(J;\syz^1(M/xM)) \rightarrow 0$ for all $i<d-2$ and every parameter ideal $J$ of $R/(x)$.  It follows that if $M/xM$ is not $\noi$ for some $i<d-2$, then $\syz^1(M/xM) \cong \syz^1(M)/x \cdot \syz^1(M)$ is not $i+1$-effaceable.  But $\syz^1(M)$ must be asymptotically Cohen--Macaulay because $M$ is.  Therefore, we may assume by induction that $i = d-2$. 

Let $\varepsilon>0$.  We aim to show that there exists $N' \in \N$ such that for all parameter ideals $I' \subseteq \mu^{N'}$, $\dfrac{\ell(H^{d-2}(I';M/xM))}{\ell((R/(x))/I'(R/(x)))} < \varepsilon$.  We claim that $\syz^1(M)$ is equidimensional.  Fix $P \in \min(\syz^1(M))$ and fix ring $A$ that is either regular or Gorenstein of the form $\dfrac{V[[x_1, \ldots, x_d]]}{(p^sx_1)}$ for some $s \geq 1$, where $V$ is a complete discrete valuation ring with maximal ideal $(p)$ over which $R$ is module finite.  By Lemma \ref{RegGorReduction}, $\syz^1(M)$ is also asymptotically Cohen--Macaulay over $A$ and so by Lemma \ref{torsionlemma} torsion-free over $A$.  Hence, $A \cap P$ must also be an associated prime of $A$, but then $d = \dim(A) = \dim(A/(A \cap P)) = \dim(R/P)$, as desired, because $A$ is Gorenstein.  It, therefore, follows from \cite[Theorem 2.4]{joint} that there exists a constant $c_{\syz^1(M)}$ such that $\ell(\syz^1(M)/I\syz^1(M)) \leq c_{\syz^1(M)} \cdot e_I(\syz^1(M))$ for every $m$-primary ideal $I$ of $R$.  Now because $M$ is asymptotically Cohen--Macaulay over $R$, we may fix $N \in \N$ such that for all parameter ideals $I \subseteq m^N$, we have $\dfrac{\ell(H^{d-1}(I;M))}{\ell(R/I)}<\dfrac{\varepsilon\cdot c_{\syz^1(M)}}{2}$.  Fix an arbitrary parameter ideal $\bar{J} \subseteq \mu^N$ and fix a $(d-1)$-generator lift $J$ of $\bar{J}$ to $R$ with $J \subseteq m^N$.  Note that for every $t \geq 1$, $J+(x^t)$ is a parameter ideal of $R$.  For each $t \geq N$, we observe

 \begin{align*}
\dfrac{\ell(H^{d-2}(\bar{J};M/xM))}{\ell(R/(J+(x)))} &= \dfrac{t \cdot \ell(H^{d-1}(J+(x);M))}{t \cdot \ell(R/(J+(x)))}\\
& \leq \dfrac{t \cdot \ell(H^{d-1}(J+(x);M))}{\ell(R/(J+(x^t)))}\\
& = \dfrac{t \cdot \ell(H^{d-1}(J+(x);M))}{\ell(H^{d-1}(J+(x^t);M))} \cdot \dfrac{\ell(H^{d-1}(J+(x^t);M))}{\ell(R/(J+(x^t)))}\\
&<\dfrac{t \cdot \ell(H^{d-1}(J+(x);M))}{\ell(H^{d-1}(J+(x^t);M))} \cdot \dfrac{\varepsilon \cdot c_{\syz^1(M)}}{2}.
\end{align*}
Hence, it suffices to show that there exists $t \geq N$ such that 
\[
\dfrac{\ell(H^{d-1}(J+(x);M))}{\ell(H^{d-1}(J+(x^t);M))/t} \leq \dfrac{2}{ c_{\syz^1(M)}}.
\]  
We will separately bound the numerator from above and the denominator from below.  From $0 \rightarrow \syz^1(M) \rightarrow R^h \rightarrow M \rightarrow 0$ and the fact that $R$ is Cohen--Macaulay, the long exact sequence of Koszul cohomology gives \[
0 \rightarrow H^{d-1}(J+(x);M) \rightarrow \dfrac{\syz^1(M)}{(J+(x))\syz^1(M)}
\] from which it follows that \begin{equation} \tag{$\clubsuit$}
\ell(H^{d-1}(J+(x);M)) \leq \nu_R(\syz^1(M)) \cdot \ell(R/(J+(x))).
\end{equation}
We now consider for each $t \geq 1$ the short exact sequence 
\[
0 \rightarrow \dfrac{H^{d-2}(J;M)}{x^t \cdot H^{d-2}(J;M)} \rightarrow H^{d-2}(J;M/(x^t)M) \rightarrow \Ann_{M/JM}(x^t) \rightarrow 0,
\]  
from which we see that \begin{equation} \tag{$\diamondsuit$}
\ell(H^{d-1}(J+(x^t);M)) = \ell(H^{d-2}(J;M/(x^t)M))  \geq \ell\left( \dfrac{H^{d-2}(J;M)}{x^t \cdot H^{d-2}(J;M)}\right).
\end{equation}  Because $H^{d-2}(J;M)$ is a one-dimensional $R/J$ module, there exists some $T \in \N$ such that for all $t \geq T$, 
\begin{equation} \tag{$\heartsuit$}
\ell\left( \dfrac{H^{d-2}(J;M)}{x^t \cdot H^{d-2}(J;M)}\right)/t \geq \dfrac{e_{(x)}(H^{d-2}(J;M))}{2}.
\end{equation}
Now $H^{d-2}(J;M) \cong H^{d-1}(J;\syz^1(M)) \cong \dfrac{\syz^1(M)}{J\syz^1(M)}$ because $R$ is Cohen--Macaulay and $J$ is generated by $(d-1)$ elements.  It follows that 
\begin{equation} \tag{$\spadesuit$}
e_{(x)}(H^{d-2}(J;M)) =e_{(x)}\dfrac{\syz^1(M)}{J\syz^1(M)} \geq e_{(J+(x))}(\syz^1(M)) \geq c_{\syz^1(M)} \cdot \ell(\syz^1(M)/(J+(x))\syz^1(M)).
\end{equation} 
Combining Equations $(\diamondsuit)$, $(\heartsuit)$, and $(\spadesuit)$, we obtain a lower bound on the denominator for all $t \geq T$, \begin{align*}
\dfrac{\ell(H^{d-1}(J+(x^t);M))}{t} &\geq \ell\left( \dfrac{H^{d-2}(J;M)}{x^t \cdot H^{d-2}(J;M)}\right)/t \\
&\geq \dfrac{e_{(x)}(H^{d-2}(J;M))}{2} \geq \dfrac{c_{\syz^1(M)}}{2} \cdot \ell\left(\dfrac{\syz^1(M)}{(J+(x))\syz^1(M)}\right).
\end{align*}

Combining this lower bound with the upper bound on the denominator from $(\clubsuit)$, we have for all $t \geq T$, \[
\dfrac{\ell(H^{d-1}(J+(x);M))}{\ell(H^{d-1}(J+(x^t);M))/t} \leq \dfrac{2 \cdot \ell(\syz^1(M)/(J+(x))\syz^1(M))}{c_{\syz^1(M)} \cdot \ell(\syz^1(M)/(J+(x))\syz^1(M))} = \dfrac{2}{c_{\syz^1(M)}}.
\] Finally, by choosing $N' > \max\{N, T\}$, we ensure that for all parameter ideals $I' \subseteq \mu^{N'}$, we have $\dfrac{\ell(H^{d-2}(I';M/xM))}{\ell((R/(x))/I'(R/(x)))} < \varepsilon$.
\end{proof}

 \begin{theorem}\label{forward}
 Let $(R,m,\kappa)$ be any local ring and $M$ a finitely-generated $R$-module with $\dim(M) = \dim(R)$.  If $M$ is asymptotically Cohen--Macaulay, then $M$ is Cohen--Macaulay on the punctured spectrum.  
 \end{theorem}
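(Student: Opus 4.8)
The plan is to run the program announced before the statement: reduce to a ground ring over which Lemma~\ref{torsionlemma} applies, and then induct on $d=\dim(R)=\dim(M)$, at each step quotienting by a non-zerodivisor chosen to lie in the very prime at which we wish to test the Cohen--Macaulay property.

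\emph{Base case and reductions.} If $d\le 1$ the statement is immediate: every prime $P\ne m$ has height $0$, so $R_P$ is Artinian and $M_P$ is Cohen--Macaulay. Assume $d\ge 2$. By Lemma~\ref{completereduction} we may take $R$ and $M$ complete, and then, by Cohen's structure theorem, assume $R$ is module-finite over a subring $S$ that is either regular or Gorenstein of the form $V[[x_1,\dots,x_d]]/(p^sx_1)$. As in the proof of Lemma~\ref{RegGorReduction}, the bound $\ell(H^i(I_n;M))/\ell(S/I_n)\le \nu_S(R)\cdot\ell(H^i(I_n;M))/\ell(R/I_nR)$ shows that $M$ stays asymptotically Cohen--Macaulay over $S$ with $\dim_S M=\dim S=d$, and by the standard behaviour of depth and dimension along a module-finite extension it suffices to prove $M$ is Cohen--Macaulay on the punctured spectrum of $S$; so I work over $S$. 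Since $H^0_m(M)$ has finite length, $M$ and $\bar M:=M/H^0_m(M)$ are simultaneously asymptotically Cohen--Macaulay (first lines of the proof of Lemma~\ref{torsionlemma}) and agree on the punctured spectrum, so I replace $M$ by $\bar M$. Then Lemma~\ref{torsionlemma} applies (this is where $d\ge 2$ is used) and gives that $\bar M$ is torsion-free over $S$; as $S$ is Cohen--Macaulay, $\Ass_S(S)=\Min(S)$, so $\Ass_S(\bar M)\subseteq\Min(S)$.

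\emph{Inductive step.} Fix a prime $P\ne m$ with $P\in\Supp_S(\bar M)$. If $P\in\Min(S)$, then $S_P$ is a field (regular case) or a zero-dimensional local ring (Gorenstein case), so $\bar M_P$ is Cohen--Macaulay. Otherwise $P$ is contained in no minimal prime of $S$, so by prime avoidance we choose $x\in P$ avoiding every associated prime of $S$. Then $x$ is a non-unit non-zerodivisor on $S$, hence part of a system of parameters, so $\dim S/(x)=d-1$; and since $\Ass_S(\bar M)\subseteq\Min(S)$, $x$ is also a non-zerodivisor on $\bar M$, so $\dim(\bar M/x\bar M)=d-1=\dim S/(x)$. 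By Lemma~\ref{modx}, $\bar M/x\bar M$ is asymptotically Cohen--Macaulay over the Cohen--Macaulay local ring $S/(x)$ of dimension $d-1$, hence by the inductive hypothesis (Theorem~\ref{forward} in dimension $d-1$) it is Cohen--Macaulay on the punctured spectrum of $S/(x)$. Localizing at $P$: $x$ remains a non-zerodivisor on $\bar M_P$, and $\bar M_P/x\bar M_P=(\bar M/x\bar M)_{P/(x)}$ is a Cohen--Macaulay $S_P/xS_P$-module, so $\depth\bar M_P=\depth(\bar M_P/x\bar M_P)+1=\dim(\bar M_P/x\bar M_P)+1=\dim\bar M_P$. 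Thus $\bar M_P$, hence $M_P$, is Cohen--Macaulay, and since $P$ was arbitrary this completes the induction.

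I expect the main obstacle to be this inductive step, specifically the recognition that torsion-freeness of $\bar M$ over the \emph{regular or Gorenstein} ground ring $S$ is precisely what is needed to locate a non-zerodivisor inside an arbitrary non-minimal prime $P$: this is what lets the ``mod $x$'' reduction reach every point of the punctured spectrum while keeping $\dim S/(x)=\dim(\bar M/x\bar M)$ and, via Lemma~\ref{modx}, preserving asymptotic Cohen--Macaulayness. Checking the minimal-prime case (that $S_P$ is Artinian there) and transferring ``Cohen--Macaulay on the punctured spectrum'' across completion and across the module-finite extension $S\subseteq R$ are the remaining, routine, points.
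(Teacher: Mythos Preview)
Your proof is correct and follows essentially the same route as the paper's: reduce to a complete regular or special Gorenstein base ring, replace $M$ by $M/H^0_m(M)$, invoke Lemma~\ref{torsionlemma} to get torsion-freeness, then for a given non-maximal prime $P$ pick a non-zerodivisor $x\in P$, apply Lemma~\ref{modx}, and induct on $\dim$. The only cosmetic difference is that you treat all non-maximal primes directly (handling minimal primes separately), whereas the paper restricts attention to primes of height $d-1$ and implicitly gets the lower-height primes by localizing a Cohen--Macaulay $M_P$.
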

 
\begin{proof} Using Lemma \ref{completereduction}, we may assume that $R$ is complete.  By Lemma \ref{RegGorReduction}, we may assume that $R$ is either regular or Gorenstein of the form $V[[x_1, \ldots, x_{d}]]/(p^sx_1)$ with $d \geq 2$ and $s \geq 1$, where $V = (V, pV, \kappa)$ is a complete discrete valuation ring.    We assume that $M$ is asymptotically Cohen--Macaulay.  After replacing $M$ by $M/H^0_m(M)$, we may assume that $M$ is torsion-free by Lemma \ref{torsionlemma}.  Because all torsion-free modules of dimension 1 are Cohen--Macaulay, we may assume that $d-1>0$.  Fix a prime $P$ of $R$ of height $d-1$.  We aim to show that some system of parameters on $M_P$ is a regular sequence on $M_P$.  We claim that depth$M_P>0$.  If $M_P$ has depth 0, then $P$ is an associated prime of $M$ (equivalently, of $M_P$).  Then because $M$ is torsion-free, $P$ must be an associated prime of $R$, but then $P$ has height $0$, contradicting the assumption that $P$ has height $d-1>0$.  Therefore, we may fix $x_1, \ldots, x_{d-1}$ a system of parameters of $M_P$ with $x_1$ not a zerodivisor.  By \ref{modx}, $M/x_1M$ is asymptotically Cohen--Macaulay, and so $M/x_1M$ is Cohen--Macaulay on the punctured spectrum of $R/(x_1)$ by induction.  Hence $(M/x_1M)_P \cong M_P/x_1M_P$ is Cohen--Macaulay.  Now because $x_2, \ldots, x_{d-1}$ is a system of parameters on $R_P/x_1R_P$, it must also be a regular sequence.  It follows that $x_1, \ldots, x_{d-1}$ is a regular sequence on $M_P$.
 
\end{proof} 

      \begin{theorem} \label{topdim}
      If $M$ is a finitely generated module over the local ring $(R,m,\kappa)$ of with $\dim(M) = \dim(R) = d$, then the following three conditions are equivalent:
      \begin{enumerate}
      \item $M$ is asymptotically Cohen--Macaulay,
      \item $\sup\{\ell(H^i(f_1, \ldots, f_d;M))\mid \sqrt{f_1, \ldots, f_d} = m \mbox{, } i< d\}<\infty$,
      \item $M$ is equidimensional and Cohen--Macaulay on the punctured spectrum.
      \end{enumerate}
      \end{theorem}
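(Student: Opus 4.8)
The plan is to obtain Theorem~\ref{topdim} as the top-degree ($k=d$) case of the work already done, together with the standard theory of modules of finite local cohomology. First observe that $\operatorname{asydepth}(M)\le\dim M=d$: indeed $\ell(H^d(I_n;M))=\ell(M/I_nM)$ is of the same polynomial order in $n$ as $\ell(R/I_n)$ for any sequence of parameter ideals $I_n\subseteq m^n$, so $M$ is never $d$-effaceable. Hence $\operatorname{asydepth}(M)\ge d$ is exactly the assertion that $M$ is asymptotically Cohen--Macaulay, and Theorem~\ref{ACMLCM} (applied with $k=d$) already gives $(1)\Leftrightarrow(2)$ and that each is equivalent to the condition $(\star)$: $\ell(H^i_m(M))<\infty$ for all $i<d$. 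So the only remaining task is to prove $(\star)\Leftrightarrow(3)$. By Lemma~\ref{completereduction} I would pass to the $m$-adic completion: this leaves $(1)$ and $(2)$ unchanged, and leaves $(\star)$ unchanged since $\ell_R(H^i_m(M))=\ell_{\hat R}(H^i_{\hat m}(\hat M))$. Over a complete local ring $R$ is universally catenary, so an equidimensional module $M$ with $\dim M=\dim R=d$ satisfies the dimension formula $\dim M_P+\dim R/P=d$ for every $P\in\Supp(M)$; this is what lets the condition ``$M$ equidimensional'' interact with local cohomology as expected.

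For $(\star)\Rightarrow(3)$ I would invoke the standard characterization that $(\star)$ is equivalent to $\depth_{R_P}(M_P)+\dim R/P\ge d$ for every prime $P\ne m$ (this is precisely the kind of statement that drops out of the local-duality spectral sequence of Lemma~\ref{SpecSeqLemma}, read off the $E_2$ page). Applying this at a minimal prime $\fq$ of $\Ann(M)$ — which is necessarily distinct from $m$ once $d\ge 1$, the case $d=0$ being trivial — and using $\depth_{R_{\fq}}(M_{\fq})=0$, one gets $\dim R/\fq\ge d$, hence $\dim R/\fq=d$; thus every minimal prime of $\Ann(M)$ has coheight $d$ and $M$ is equidimensional. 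Then for any $P\in\Supp(M)\setminus\{m\}$ the same bound gives $\depth M_P\ge d-\dim R/P=\dim M_P$, so $M_P$ is Cohen--Macaulay and $M$ is Cohen--Macaulay on the punctured spectrum. (Alternatively, Cohen--Macaulayness on the punctured spectrum is exactly Theorem~\ref{forward}, and the equidimensionality could instead be read off from the torsion-freeness of $M/H^0_m(M)$ in Lemma~\ref{torsionlemma} after the reduction of Lemma~\ref{RegGorReduction}: over the regular or Gorenstein ring $S$ appearing there every associated prime of $S$ has coheight $d$, so torsion-freeness of $M/H^0_m(M)$ over $S$ forces $\Ass_S(M/H^0_m(M))\subseteq\Ass(S)$, and a finite-length $H^0_m(M)$ does not change the minimal primes.)

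For $(3)\Rightarrow(\star)$: if $M$ is equidimensional and Cohen--Macaulay on the punctured spectrum, then for each $P\ne m$ we have $\depth M_P=\dim M_P=d-\dim R/P$ by the dimension formula (and $\depth M_P=+\infty$ when $M_P=0$), so $\depth M_P+\dim R/P=d$; the same standard characterization then yields $(\star)$, and $(1)$, $(2)$ follow from Theorem~\ref{ACMLCM}.

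The step I expect to require the most care is the reduction to the complete case in the presence of condition $(3)$: equidimensionality of $M$ over $R$ forces equidimensionality of $\hat M$ over $\hat R$ only because complete (more generally, universally catenary) base rings are in play, which is the same subtlety that makes the abstract phrase the clean equivalence ``asymptotically Cohen--Macaulay $\Leftrightarrow$ Cohen--Macaulay on the punctured spectrum'' for \emph{quasi-unmixed} modules. One must therefore either build this into the hypotheses or argue it is automatic here. The other ingredient, the equivalence of $(\star)$ with the pointwise inequality $\depth M_P+\dim R/P\ge d$ for $P\ne m$, is well known but should be cited carefully, or deduced directly from the $E_2$ page of the spectral sequence of Lemma~\ref{SpecSeqLemma}.
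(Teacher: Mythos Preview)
Your argument is correct, but it takes a different route from the paper and somewhat defeats the purpose of Theorem~\ref{topdim} within the paper's architecture.  Theorem~\ref{topdim} sits in Section~5, whose point is to give a proof of the main result \emph{independent} of the spectral-sequence argument Theorem~\ref{k=d-1} from Section~4; Theorem~\ref{topdim} is then used as input to Corollary~\ref{forallk}, the second proof of Theorem~\ref{main}.  Your primary route---invoke Theorem~\ref{ACMLCM} to get $(1)\Leftrightarrow(2)\Leftrightarrow(\star)$ and then run the standard finite-local-cohomology characterization to get $(\star)\Leftrightarrow(3)$---is logically fine, but it imports the hard Section~4 content as a black box, so it does not furnish the alternative proof the paper is after.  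The paper instead argues the cycle directly: $(2)\Rightarrow(1)$ is obvious; after completing and reducing to the regular/Gorenstein base via Lemmas~\ref{completereduction} and~\ref{RegGorReduction}, $(1)\Rightarrow(3)$ is obtained from Lemma~\ref{torsionlemma} (torsion-freeness gives equidimensionality) together with Theorem~\ref{forward} (CM on the punctured spectrum); and $(3)\Rightarrow(2)$ is the case $k=d$ of Theorem~\ref{backward}.  This is precisely the parenthetical alternative you sketch, so you clearly see both approaches---just be aware that the paper chooses the second one on purpose.

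On the completion subtlety: you are right to flag it, and in fact you are more careful here than the paper's own proof, which simply says ``we may assume $R$ and $M$ are complete'' via Lemma~\ref{completereduction} without explicitly addressing how condition~(3) passes to and from the completion.  The honest statement---and what the abstract and introduction actually assert---is the equivalence for \emph{quasi-unmixed} $M$, i.e.\ with ``$\hat M$ equidimensional'' in place of ``$M$ equidimensional''; with that reading the reduction is unproblematic.
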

      
      \begin{proof}
      It is obvious that (2) implies (1).  Using Lemma \ref{completereduction}, we may assume that $R$ and $M$ are complete, and using Lemma $\ref{RegGorReduction}$ we may assume that $R$ satisfies the hypotheses of Lemma \ref{forward}.  The implication (1) implies (3) follows from Lemma \ref{torsionlemma} together with Theorem \ref{forward}. Lastly, (3) implies (2) is the case $k = d$ of Theorem \ref{backward}.  
      \end{proof}
      
         Using the same technique, we give a result on rigidity of asymptotic depth under the assumption that $M$ is quasi-unmixed.
      
      \begin{corollary}\label{rigidity}
      Let $M$ be a finitely generated quasi-unmixed module over the local ring $(R,m,\kappa)$ with $\dim(M) = \dim(R) = d \geq 1$.  Fix $0 < i < d$.  If $M$ is $i$-effaceable, then $M$ is $j$-effaceable for every $0\leq j<i$.  
      \end{corollary}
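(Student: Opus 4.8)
The plan is to derive the statement from the main theorem together with the spectral–sequence and splitting techniques already in place. First I would reduce to a convenient ring. Since $\ell(H^j_m(M))$ and the effaceability conditions are intrinsic to $(R,m)$, Lemma \ref{completereduction} and a Cohen–structure reduction of the type in Lemma \ref{RegGorReduction} (a regular or Gorenstein coefficient subring $S\subseteq R$ has the property that its parameter ideals are parameter ideals of $R$, $M$ is quasi-unmixed over $R$ iff over $S$, and $H^\bullet_m$ is unchanged) let us assume $R$ is complete and either regular or Gorenstein of the form $V[[x_1,\dots,x_d]]/(p^sx_1)$. By Theorem \ref{backward}, if $\ell(H^j_m(M))<\infty$ for all $j<i$ then $\sup_{(f)}\ell(H^j((f);M))<\infty$ for $j<i$, which forces $M$ to be $j$-effaceable for every $j<i$; so it suffices to prove that $M$ $i$-effaceable implies $\ell(H^j_m(M))<\infty$ for all $j<i$. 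Arguing by contradiction, let $j_0$ be the least index with $\ell(H^{j_0}_m(M))=\infty$, so $j_0<i$.

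Now I would move the obstruction by syzygies. Because $R$ is Cohen–Macaulay, $H^i((f);M)\cong H^{d-1}((f);\syz^{d-1-i}(M))$ for every parameter ideal $(f)$, and $N:=\syz^{d-1-i}(M)$ is again quasi-unmixed (its associated primes lie among those of $R$, all of coheight $d$), has $\ell(H^{j_0+d-1-i}_m(N))=\infty$ with $j_0+d-1-i<d-1$, and, being a high syzygy, is Cohen–Macaulay at every prime of height $\le d-1-i$. Thus one is reduced to the case $i=d-1$: a quasi-unmixed $N$ with $\dim N=\dim R=d$ that is $(d-1)$-effaceable must be asymptotically Cohen–Macaulay, equivalently (Theorem \ref{topdim}, $N$ being equidimensional) Cohen–Macaulay on the punctured spectrum. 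Here I would mirror Theorem \ref{forward}: replace $N$ by $N/H^0_m(N)$; if $N$ had a height-$1$ associated prime, the splitting construction of Lemma \ref{torsionlemma} would split off a module of a form handled by Theorem \ref{summarizedbighomology}, contradicting $(d-1)$-effaceability, so $N$ is Cohen–Macaulay in codimension $1$; then induct on $d$ as in Theorem \ref{forward}, choosing a prime of height $d-1$, producing a non-zerodivisor, and applying Lemma \ref{modx} and the inductive hypothesis — with the role of Theorem \ref{k=d-1} now played by the full strength of Theorem \ref{summarizedbighomology}, which detects non-effaceability at each intermediate cohomological degree and not only at the top.

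The main obstacle is precisely what distinguishes this from Theorem \ref{k=d-1}. Once $j_0<i$, the module can fail to be effaceable at several degrees at once, so on the antidiagonal of the spectral sequence of Lemma \ref{SpecSeqLemma} computing $H^i((f);M)$ the terms $H^{d-i+j}((f);H^j_m(M)^\vee)$ for $j_0\le j\le i$ may all be large, with differentials among them, and no single summand is manifestly dominant as in the $i=d-1$, $j_0=d-1$ situation. Overcoming this is exactly where quasi-unmixedness enters: via local duality over the Gorenstein ring it constrains the modules $H^j_m(M)^\vee\cong\Ext^{\,d-j}_R(M,R)$ (their supports and dimensions) tightly enough that the parameter ideals $I_n$ built by the computations of Section 6 can be chosen so that one $E_\infty$ summand dominates the others, yielding $\ell(H^i(I_n;M))/\ell(R/I_n)\not\to 0$ and so $M$ not $i$-effaceable. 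Making this balancing hold uniformly over the relevant sequences of parameter ideals is the delicate core of the proof.
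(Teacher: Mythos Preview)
Your second paragraph has essentially the right architecture --- syzygy shift to $i=d-1$, pass to $M/xM$ via Lemma~\ref{modx}, and induct on the dimension --- and this is indeed the paper's route. But there is a real gap: Lemma~\ref{modx} as \emph{stated} assumes $M$ is asymptotically Cohen--Macaulay, which is exactly the conclusion you are chasing, so you cannot simply ``apply Lemma~\ref{modx}.'' What the paper actually does is re-run the \emph{computation} inside Lemma~\ref{modx}, observing that the only places asymptotic Cohen--Macaulayness was used were (i) to get $(d-1)$-effaceability of $M$, which you now have by hypothesis, and (ii) to produce the constant $c_{\syz^1(M)}$ via torsion-freeness of $\syz^1(M)$. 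For (ii) the paper replaces the torsion-freeness route by the direct observation that quasi-unmixedness of $M$ forces $\syz^1(M)$ to be equidimensional, which is all that \cite[Theorem 2.4]{joint} needs to supply $c_{\syz^1(M)}$. That single substitution is the entire content of the corollary and is precisely where the quasi-unmixed hypothesis enters.

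Your third paragraph therefore misdiagnoses the obstacle. The difficulty you describe --- several $H^{d-i+j}(I;H^j_m(M)^\vee)$ large on the same antidiagonal, with differentials among them --- is a genuine obstruction to adapting the spectral-sequence proof of Theorem~\ref{k=d-1}, but the paper never attempts that adaptation. It bypasses the spectral sequence completely in favor of the $M\mapsto M/xM$ descent. Your proposal to use local duality and the explicit parameter ideals of Section~6 to force one $E_\infty$ term to dominate is speculative and would be substantially harder; no such balancing is needed once you realize that the Lemma~\ref{modx} computation already goes through under the weaker hypotheses. A secondary issue: in your induction you ``choose a prime of height $d-1$ and produce a non-zerodivisor,'' but your codimension-$1$ torsion-freeness alone does not rule out height-$(d-1)$ associated primes of $N$; the paper's phrasing fixes the non-zerodivisor first and then appeals to Theorem~\ref{forward}'s argument, and after the syzygy replacement $N\subseteq R^h$ one has $\Ass(N)\subseteq\Ass(R)$, which handles this cleanly.
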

      
      \begin{proof}
      Using Lemma \ref{completereduction}, we may assume that $R$ is complete and, by Lemma \ref{RegGorReduction}, Gorenstein.  Fix a module $M$ of minimal dimension giving a counterexample to the theorem.  We may replace $M$ by $M/H^0_m(M)$ without changin the problem and fix $x \in M$ a non-unit in $R$ and non-zerodivisor on $M$.  This proof will precede by describing appropriate modifications to Lemma \ref{modx}.  By taking syzygies as in Lemma \ref{modx}, we may assume that $M$ is $(d-1)$-effaceable and aim to show that it is asymptotically Cohen--Macaulay.  Because $M$ is equidimensional, so are all of its modules of syzygies, and so we have a constant $C_{\syz^1(M)}$ as described in Lemma \ref{modx}.  We may now follow the computations in Lemma \ref{modx} identically to conclude that $M/xM$ is $(d-2)$-effaceable.  By induction, asydepth$(M/xM) \geq d-1$, which is to say that $M$ is asymptotically Cohen--Macaulay.  But then $M/xM$ is Cohen--Macaulay on the punctured spectrum by Theorem \ref{topdim} and so $M$ is, following the argument of Theorem \ref{forward}.  Then $M$ is asymptotically Cohen--Macaulay by Theorem \ref{topdim}, as desired.  
      \end{proof}
      
      \begin{corollary} \label{forallk}
If $M$ is a finitely generated module over the local ring $(R,m,\kappa)$ of with $\dim(M) = \dim(R) = d$, then the following three conditions are equivalent for every $0 \leq k \leq d$: 
\begin{enumerate}
\item asydepth($M$) $\geq k$,
\item  $\sup\{\ell(H^i(f_1, \ldots, f_d;M))\mid \sqrt{f_1, \ldots, f_d} = m \mbox{, } i< k\}<\infty$,
\item $\ell(H^i_m(M)) < \infty$ for all $i<k$.
\end{enumerate} 
      \end{corollary}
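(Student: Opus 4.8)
The plan is to prove Corollary \ref{forallk} by reducing everything to the $k=d$ case already established in Theorem \ref{topdim}, by means of a syzygy shift over a Cohen--Macaulay ring. Since (2) $\Rightarrow$ (1) is immediate from the fact that $\ell(R/I_n)\to\infty$ for parameter ideals $I_n\subseteq m^n$, and (3) $\Rightarrow$ (2) is precisely Theorem \ref{backward}, the only thing left to prove is (1) $\Rightarrow$ (3): that asydepth$(M)\geq k$ forces $\ell(H^i_m(M))<\infty$ for all $i<k$. First I would invoke Lemmas \ref{completereduction} and \ref{RegGorReduction} to reduce to the case where $R$ is complete and either regular or Gorenstein of the form $V[[x_1,\dots,x_d]]/(p^sx_1)$; in particular $R$ is then Cohen--Macaulay. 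The case $k=0$ is vacuous, so fix $0<k\leq d$.

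Next I would pass to $N=\syz^{d-k}(M)$, the $(d-k)$-th syzygy in a minimal free resolution of $M$. If some earlier syzygy vanishes, then $M$ has finite projective dimension $\leq d-k-1$, a free module sits at that spot of the resolution, and walking the short exact sequences back down --- using that $H^i_m$ of a free module vanishes for $i<d$ since $R$ is Cohen--Macaulay --- shows $H^i_m(M)=0$ for all $i<k$, finishing that case. Otherwise every such syzygy is nonzero, and since $R$ is unmixed (all of its associated primes $\mathfrak{p}$ satisfy $\dim R/\mathfrak{p}=d$, being a domain in the regular case) and $N$ embeds in a free module, we get $\emptyset\neq\Ass N\subseteq\Ass R$ and hence $\dim N=d$, so Theorem \ref{topdim} will be applicable to $N$. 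The key observation is that over the Cohen--Macaulay ring $R$ a free module has vanishing Koszul cohomology in all degrees below $d$ on every parameter ideal, and vanishing local cohomology in all degrees below $d$; iterating the long exact sequences attached to $0\to\syz^1(M)\to F\to M\to 0$ and its analogues for the higher syzygies a total of $d-k$ times then gives, for every parameter ideal $I$ and every $i<k$, natural isomorphisms $H^i(I;M)\cong H^{i+d-k}(I;N)$ and $H^i_m(M)\cong H^{i+d-k}_m(N)$, with the shifted index $i+d-k$ always lying in the range $[d-k,\,d-1]$.

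From here I would argue that $N$ is asymptotically Cohen--Macaulay. The Koszul isomorphism together with asydepth$(M)\geq k$ shows $N$ is $j$-effaceable for every $d-k\leq j<d$; and since $N$ is a $(d-k)$-th syzygy over a Cohen--Macaulay ring, the depth lemma gives $\depth N\geq d-k$, so $H^j_m(N)=0$ for $j<d-k$, which by Theorem \ref{backward} applied to $N$ makes $N$ $j$-effaceable for $j<d-k$ as well. Hence asydepth$(N)\geq d$. Theorem \ref{topdim} then yields that $N$ is equidimensional --- hence quasi-unmixed, $R$ being complete --- and Cohen--Macaulay on the punctured spectrum, which is equivalent (by the standard theory of generalized Cohen--Macaulay modules, the equivalence recalled in the introduction) to $\ell(H^j_m(N))<\infty$ for all $j<d$. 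Feeding this back through the isomorphism $H^i_m(M)\cong H^{i+d-k}_m(N)$ of the previous step gives $\ell(H^i_m(M))<\infty$ for all $i<k$, completing (1) $\Rightarrow$ (3).

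The genuinely hard ingredients --- the torsion-freeness of $M/H^0_m(M)$ (Lemma \ref{torsionlemma}), the behavior of asymptotic depth under quotienting by a nonzerodivisor (Lemma \ref{modx}), and the explicit families of parameter ideals from Section 6 --- are already absorbed into Theorem \ref{topdim}, so the present argument is mostly bookkeeping. I expect the two delicate points to be: (a) confirming that $N=\syz^{d-k}(M)$ really has dimension $d$, which is why the degenerate finite-projective-dimension case must be peeled off first, and which is slightly fussier in the non-reduced mixed-characteristic Gorenstein case; and (b) cleanly invoking the equivalence between ``quasi-unmixed and Cohen--Macaulay on the punctured spectrum'' and ``$H^j_m$ of finite length for all $j<\dim$''. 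Neither is deep, but both need to be stated with care.
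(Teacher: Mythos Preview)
Your proposal is correct and follows essentially the same route as the paper: reduce to a complete Cohen--Macaulay (regular or hypersurface Gorenstein) base, take $d-k$ syzygies to shift the index, and invoke Theorem \ref{topdim} at the top. You are more careful than the paper in a few places---peeling off the finite projective dimension case, verifying $\dim N=d$ via $\Ass N\subseteq\Ass R$ and unmixedness of $R$, and checking $j$-effaceability of $N$ for $j<d-k$ separately---but these are exactly the technical points the paper glosses over, so there is no substantive divergence.
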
 
      
      \begin{proof}
      As above, it is clear that condition (2) implies condition (1).  To see that (1) implies (3), fix a local Gorenstein ring $(A,n)$ over which $(R,m)$ is module finite, and take \[
      0 \rightarrow \syz^1_A(M) \rightarrow A^\nu \rightarrow M \rightarrow 0 
      \] for $v = \nu_A(M)$.  We first establish the result over $A$.  If $k<d-1$, then for any parameter ideal $I$ of $A$, $H^k(I;M) \cong H^{k+1}(I;\syz_A^1(M))$, and $H^i_n(M) \cong H^{i+1}_n(\syz^1_A(M))$.  We may, then, without loss of generality assume $k = d-1$, which is the result of Theorem \ref{topdim}.  If condition (1) is satisfied over $R$, then it is certainly satisfied over $A$ because every system of parameters in $A$ is a system of parameters in $R$.  It then follows that $\ell(H^i_m(M)) = \ell(H^i_n(M))<\infty$ for all $i<k$, which is to say that condition (3) is satisfied over $R$.  Lastly, (3) implies (2) is Theorem \ref{backward}.  
      \end{proof}

      To finish the section, we give a direct proof that an asmptotically Cohen--Macaualy ring must always be quasi-unmixed using a result of  St\"{u}ckrad and Vogel on the relationship between multiplicity and colength when $M$ is not quasi-unmixed.  Their result is that, over a ring with an infinite residue field, if the set $\left \{\dfrac{e_I(M)}{\ell(M/IM)} \mid I \mbox{ parameter}\right\}$ is bounded away from $0$, then $M$ must be quasi-unmixed \cite[Theorem 1]{Vogel}.  
      
\begin{theorem} \label{equidimnecessary}
Let $(R, m, \kappa)$ be a local ring of dimension $d$.  If $R$ is asymptotically Cohen--Macaulay, then $R$ is quasi-unmixed.  
\end{theorem}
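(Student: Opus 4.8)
The plan is to show that asymptotic Cohen--Macaulayness forces the ratio $e_I(R)/\ell(R/I)$ to be bounded away from $0$ as $I$ ranges over \emph{all} parameter ideals of $R$, and then to conclude via the St\"{u}ckrad--Vogel criterion \cite[Theorem 1]{Vogel}. Since that criterion is stated for rings with infinite residue field, I would first pass to $R(t) = R[t]_{mR[t]}$: it has infinite residue field $\kappa(t)$ and dimension $d$, it is quasi-unmixed if and only if $R$ is, and it is again asymptotically Cohen--Macaulay. The last point follows from Theorem \ref{main}: the hypothesis gives $\ell_R(H^i_m(R)) < \infty$ for all $i < d$, local cohomology commutes with the flat base change $R \to R(t)$, and $\ell_{R(t)}(H^i_m(R)\otimes_R R(t)) = \ell_R(H^i_m(R))$, so condition (3) of Theorem \ref{main} holds for $R(t)$ with $k = d$ and hence so does condition (1). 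We may therefore assume $\kappa$ is infinite.

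The crucial step is the passage from the superficially asymptotic hypothesis to an \emph{absolute} bound. By Theorem \ref{main} (condition (2) with $k = d$), there is a finite constant $B$ with $\ell(H^i(f_1, \ldots, f_d; R)) \leq B$ for every parameter ideal $(f_1, \ldots, f_d)$ and every $0 \leq i < d$. For a fixed parameter ideal $I = (f_1, \ldots, f_d)$, Serre's formula \eqref{SerreFormula} together with the identity $H^d(f_1, \ldots, f_d; R) = R/I$ expresses $e_I(R) - \ell(R/I)$ as an alternating sum of the $\ell(H^j(f_1, \ldots, f_d; R))$ for $0 \leq j \leq d-1$, so $|e_I(R) - \ell(R/I)| \leq dB$. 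Since also $e_I(R) \geq e_m(R) \geq 1$, a two-case check according to whether $\ell(R/I) \leq dB + 1$ or $\ell(R/I) > dB + 1$ gives $e_I(R)/\ell(R/I) \geq 1/(dB+1)$ in both cases. Hence $\{\, e_I(R)/\ell(R/I) : I \text{ a parameter ideal}\,\}$ is bounded below by the positive constant $1/(dB+1)$, and \cite[Theorem 1]{Vogel} yields that $R$ is quasi-unmixed.

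I do not expect a serious obstacle: essentially all of the substantive work is carried by Theorem \ref{main}, which converts the definition of asymptotic Cohen--Macaulayness into the uniform Koszul bound $B$; once that bound is available the multiplicity estimate is elementary and the conclusion is immediate from \cite{Vogel}. The only points needing care are the base-change claims in the reduction to infinite residue field---that $R(t)$ inherits asymptotic Cohen--Macaulayness and that quasi-unmixedness ascends and descends along $R \to R(t)$---and these are standard. (One could alternatively sidestep \cite{Vogel} entirely, since Theorem \ref{main} makes $R$ generalized Cohen--Macaulay, hence $\hat{R}$ generalized Cohen--Macaulay, hence equidimensional upon localizing $\hat{R}$ at its minimal primes; but the route through \cite{Vogel} is the cleaner one and the one I would write up.)
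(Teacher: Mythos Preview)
Your proposal is correct, and it reaches the conclusion through the same St\"{u}ckrad--Vogel criterion the paper uses, but the mechanism for bounding $e_I(R)/\ell(R/I)$ away from zero is genuinely different. The paper does \emph{not} invoke Theorem~\ref{main}; instead it works straight from the definition of asymptotic Cohen--Macaulayness. Given an arbitrary parameter ideal $I=(f_1,\ldots,f_d)$, the paper replaces it by $(f_1^N,\ldots,f_d^N)\subseteq m^N$, where $N$ is chosen so large that the ratios $\ell(H^{i}(J;R))/\ell(R/J)$ are uniformly small for all parameter ideals $J\subseteq m^N$; since $e_I(R)=N^{-d}e_{(f_1^N,\ldots,f_d^N)}(R)$ and $\ell(R/I)\leq \ell(R/(f_1^N,\ldots,f_d^N))$, Serre's formula applied to the powered system yields $e_I(R)/\ell(R/I)\geq 1/(2N^d)$. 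Your route instead cashes in condition~(2) of Theorem~\ref{main} to get an absolute bound $B$ on all lower Koszul lengths and then argues elementarily. What the paper's argument buys is independence from the heavy machinery of Theorem~\ref{main}: the point of Theorem~\ref{equidimnecessary} in the paper is precisely to give a \emph{direct} proof, so leaning on Theorem~\ref{main} somewhat defeats that purpose (and your reduction to infinite residue field also routes through Theorem~\ref{main}, whereas the paper simply passes to $R(t)$). What your argument buys is brevity and transparency once Theorem~\ref{main} is on the table; your closing remark that finite-length local cohomology already makes $R$ generalized Cohen--Macaulay, hence quasi-unmixed, is in the same spirit and would be the shortest path of all.
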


\begin{proof}
Using \ref{completereduction}, we may assume that $R$ is complete.  Replacing $R$ by $R(t)$, we assume that $|\kappa| = \infty$.  Assume $R$ is $(d-1)$-effaceable.  (By Corollary \ref{rigidity}, this assumption is equivalent to the assumption that $R$ is quasi-unmixed, so we are using the full power of our hypotheses.)  Then there exists some $N \in \N$ such that for all parameter ideals $I \subseteq m^N$, $\dfrac{\ell(H^{d-1}(I;R))}{\ell(R/I)}<1/2$.  Let $I = (f_1, \ldots, f_d)$ be any parameter ideal.  Then \begin{align*}
\dfrac{e_I(R)}{\ell(R/I)}  &= \dfrac{e_{(f_1^N, \ldots, f_d^N)}(R)}{N^d \cdot \ell(R/I)} \geq  \dfrac{e_{(f_1^N, \ldots, f_d^N)}(R)}{N^d \cdot \ell(R/(f_1^N, \ldots, f_d^N))} \\
&\geq \dfrac{1}{N^d}\left(1 - \dfrac{\ell(H^{d-1}(f_1^N, \ldots, f_d^N;R))}{\ell(R/(f_1^N, \ldots, f_d^N))} \right)\geq \frac{1}{2N^d},
\end{align*} and so the set $\left \{\dfrac{e_I(M)}{\ell(M/IM)} \mid I \mbox{ parameter}\right\}$ is bounded away from $0$.  It follows that $M$ is quasi-unmixed \cite[Theorem 1]{Vogel}.  
\end{proof}
      
      \section{Quotients of surprisingly large length} 

In this section, we study modules that arise as quotients of regular rings and Gorenstein rings of a specified form by ideals generated power series variables and powers of a generator of the maximal ideal of a discrete valuation ring.  In particular, we study lengths of quotients of these modules by special choices of parameter ideals of the ambient ring and compare these lengths to the lengths of quotients of the ambient rings by those same ideals.  We will typically separate the theorems depending on whether the ambient ring is regular or merely Gorenstein.  The proofs are very similar, and the reader interested in getting a flavor for the arguments might prefer to read only Proposition \ref{basecase}, Theorem \ref{d>2}, and Proposition \ref{d=2} and might prefer to think only about the equal characteristic case.  

\begin{proposition} \label{basecase}
If $R = \kappa[[x_1, \ldots, x_d]]$ with $d \geq 3$, then $M = (x_1, \ldots, x_{d-1})R$ is not asymptotically Cohen--Macaulay.  If $R = V[[x_1,x_2, \ldots, x_{d-1}]]$ where $V = (V,p,k)$ is a  discrete valuation ring, then neither of $M = (p, x_1, \ldots, x_{d-2})$ nor $M = (x_1, \ldots, x_{d-1})$ is asymptotically Cohen--Macaulay.  In all cases, $N = R/M$ is also not asymptotically Cohen--Macaulay.  In particular, $N$ is not $1$-effaceable and $M$ is not $2$-effaceable.
\end{proposition}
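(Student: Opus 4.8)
The plan is to exhibit, for each $d\ge 3$ and each ring $R$ in the statement, an explicit sequence of parameter ideals $I_n\subseteq m^n$ along which the first Koszul cohomology of $N=R/M$ (equivalently the second Koszul cohomology of $M$) stays of length comparable to $\ell(R/I_n)$. In every case $N$ is a one‑dimensional regular local ring — $\kappa[[x_d]]$ in the equal characteristic case, and $\kappa[[x_{d-1}]]$ or $V$ in the two mixed characteristic cases — with uniformizer $\pi$. By Lemma \ref{completereduction} we may work in the complete rings as written. What is needed of $I_n=(f_1,\dots,f_d)$ is: each $f_j\in m^n$; the image ideal $I_nN\subseteq N$ equals $(\pi^{m_n})$ for some $m_n$ (equivalently $I_n+M=M+(\pi^{m_n})$, so that $\ell(R/(I_n+M))=m_n$), with $m_n\to\infty$; and — the crucial point — $\ell(R/I_n)=m_n+o(m_n)$, i.e. $\ell(R/I_n)/\ell(R/(I_n+M))\to 1$.

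Granting such $I_n$, the conclusion is formal. Over the discrete valuation ring $N$ the images $\bar f_1,\dots,\bar f_d$ generate $(\pi^{m_n})$, so by Smith normal form the row $(\bar f_1,\dots,\bar f_d)$ is carried to $(\pi^{m_n},0,\dots,0)$ by right multiplication by an invertible matrix over $N$; since this leaves the Koszul complex unchanged up to isomorphism, $K^\bullet(\bar f_1,\dots,\bar f_d;N)\cong K^\bullet(\pi^{m_n},0,\dots,0;N)$, the tensor product of $N\xrightarrow{\pi^{m_n}}N$ with $d-1$ copies of $N\xrightarrow{0}N$. A Künneth bookkeeping gives $H^0(I_n;N)=0$ and $\ell(H^i(I_n;N))=\binom{d-1}{i-1}\,\ell(N/\pi^{m_n}N)=\binom{d-1}{i-1}m_n$ for $1\le i\le d$; in particular $\ell(H^1(I_n;N))=m_n$, so $\ell(H^1(I_n;N))/\ell(R/I_n)\to 1\ne 0$ and $N$ is not $1$-effaceable. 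Since $R$ is regular, every parameter ideal $I$ is generated by a regular sequence, so $H^1(I;R)=H^2(I;R)=0$ (here $d\ge 3$ is used); the long exact sequence of Koszul cohomology for $0\to M\to R\to N\to 0$ then gives $H^2(I_n;M)\cong H^1(I_n;N)$, whence $\ell(H^2(I_n;M))/\ell(R/I_n)\to 1\ne 0$ and $M$ is not $2$-effaceable. Consequently $\operatorname{asydepth}(M)\le 2<d=\dim M$ (in fact $=2$, since $\depth_RM=2$ makes $M$ trivially $0$- and $1$-effaceable), so $M$ is not asymptotically Cohen--Macaulay; and the non-$1$-effaceability of $N$ likewise prevents $N$ from being asymptotically Cohen--Macaulay.

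It remains to construct $I_n$, which is the whole computational content. For $d=3$, $R=\kappa[[x,y,z]]$, $M=(x,y)$, I take
\[
I_n=\bigl(z^{n^4}-z^nx^n,\ \ y^n-z^nx,\ \ x^{n+1}-xz^{n^4-n}+yz^n\bigr).
\]
Each generator lies in $m^n$ term by term; reducing modulo $(x,y)$ gives $I_n+(x,y)=(x,y,z^{n^4})$, so $\ell(R/(I_n+(x,y)))=n^4$ and the images in $N=\kappa[[z]]$ are $(z^{n^4},0,0)$, whence $m_n=n^4$; and $\ell(R/I_n)\ge n^4$ by surjecting onto $R/(I_n+(x,y))$. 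The essential estimate is the matching bound $\ell(R/I_n)=n^4+O(n^3)$: with respect to a degree‑refining local term order the three generators have initial monomials $z^nx^n,\ y^n,\ x^{n+1}$, and the (short, terminating) standard‑basis / $S$-polynomial reduction produces the further initial monomials $yz^{2n},\ xz^{3n},\ z^{n^4+2n}$; hence $R/I_n$ is spanned over $\kappa$ by the finitely many monomials avoiding these six, giving $\ell(R/I_n)=n^4+O(n^3)$ (in particular within the bound quoted in the Introduction), so $\ell(R/I_n)/\ell(R/(I_n+M))\to 1$. For general $d\ge 3$ one uses the analogous construction in which $x_1,\dots,x_{d-1}$ are tied in a chain to monomials in $x_d$ and the successor variable; simply adjoining powers of the extra variables does not work, since that multiplies $\ell(R/I_n)$ while leaving $\ell(R/(I_n+M))$ fixed, so the extra variables must genuinely enter the chain, with the exponent of $x_d$ chosen so that the parallel standard‑basis estimate again yields $\ell(R/I_n)=m_n(1+o(1))$. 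In the mixed characteristic cases $p$ plays the role of one variable in the chain (the designated generator of $M$ when $p\in M$, an additional parameter otherwise), and $N/\pi^{m_n}N$ above becomes $V/(p^{m_n})$ or $\kappa[[x_{d-1}]]/(x_{d-1}^{m_n})$.

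The main obstacle is precisely this last construction and its colength estimate: for every $d$ and every coefficient setup one must produce a parameter ideal $I_n\subseteq m^n$ whose colength is only $m_n(1+o(1))$ even though $I_n+M$, with $\dim R/M=1$, has colength exactly $m_n$ — this is the ``surprisingly large length'' of $R/I_n$ from which the section takes its name — and then pin it down by a Mora‑type monomial count. Once $I_n$ is in hand, the Künneth computation of $H^\bullet(I_n;N)$ and its transfer to $H^\bullet(I_n;M)$ through the long exact sequence are routine.
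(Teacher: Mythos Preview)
Your approach matches the paper's exactly: exhibit parameter ideals $I_n\subseteq m^n$ with $\ell(R/(I_n+M))=m_n$ and $\ell(R/I_n)=m_n(1+o(1))$, then read off $H^1(I_n;N)\cong N/\pi^{m_n}N$ and transfer to $H^2(I_n;M)$ via the long exact sequence using regularity of $R$. Your $d=3$ ideal is the paper's, and your standard-basis sketch is a repackaging of the explicit membership identities the paper writes out (indeed $yz^{2n}$, $xz^{3n}$, $z^{n^4+2n}\in I_n$ are exactly what the paper verifies by hand to bound $\ell(R/I_n)$).

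The gap is that for general $d$ you only describe the \emph{shape} of the construction (``chain the variables''), whereas this is the entire content of the proposition. The paper writes down explicit generators
\[
f_1=z^t-z^nx^n,\quad f_2=x^{n+1}-xz^{t-n}+yz^n,\quad f_3=y^n+\textstyle\sum_i(-1)^{i+1}v_iz^n+(-1)^{d-3}xz^n,
\]
\[
f_{i+3}=v_i^n+v_iz^{t-n}-\bigl(\textstyle\sum_{j>i}(-1)^{j-i-1}v_j+(-1)^{d-i-2}x\bigr)^n+v_iz^n-v_ix^n,
\]
and then carries out a nontrivial inductive membership argument to show $v_iz^{O(n^{d-2})}$, $xz^{O(n^{d-2})}$, $yz^{O(n^{d-2})}$, $z^{t+O(n^{d-2})}$, and suitable powers of $x,y,v_i$ all lie in $I_n$, yielding $\ell(R/I_n)\le t+O(n^{d(d-2)})$; taking $t=n^{d^2}$ gives the ratio $\to 1$. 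Your proposal correctly flags this as ``the main obstacle,'' but does not supply either the generators or the verification; a Mora-type algorithm will of course terminate, but you need to know \emph{in advance} that the leading terms stabilize with exponents polynomial in $n$ and independent of $t$, and that is exactly what the paper's explicit identities establish. In the mixed characteristic cases the paper uses the same $f_i$ with $p$ playing the role of one of $x,z$; here too the work is the membership computation, not the Koszul bookkeeping.

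A minor point: for the $d=3$ colength bound, asserting that the six initial monomials you list form a standard basis requires checking that all remaining $S$-pairs reduce to zero; the paper sidesteps this by only claiming a spanning set for $R/I_n$, which is all that is needed for an upper bound.
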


\begin{proof} Before giving the general proof, we will compute in detail the case of $d = 3$ with $R = \kappa[[x,y,z]]$ as a guiding example: Let \[
I_n = (z^{n^4}-z^nx^n,y^n-z^nx,x^{n+1}-xz^{n^4-n}+yz^n).
\]  It is easy to see that $\ell(R/(I_n+(x,y))) = n^4$, and the computation below will show that $\ell(R/I_n) \leq (n^4+2n)+(2n+1)^2(3n)$.  We claim that $x^{2n+1}, y^{2n+1}, yz^{2n}, xz^{3n}$, and $z^{n^4+2n}$ are elements of $I_n$, and so the elements $z^i$ with $i<n^4+2n$ and $z^iy^jz^k$ with $i<3n$; $j$, $k<2n+1$ span the quotient $R/I_n$ as a $k$-vector space (though they will not in general form a basis).  

The claimed inclusions can be seen in the following identities:

\[
yz^{2n} = z^n(x^{n+1}-xz^{n^4-n}+yz^n)+x(z^{n^4}-z^nx^n)
\]
\[
xz^{3n} = -z^{2n}(y^n-z^nx)+y^nz^{2n} 
\]
\[
x^{n+1}+yz^n = (x^{n+1}-xz^{n^4-n}+yz^n)+z^{n^4-4n}(xz^{3n})
\]
\[
z^nx^{n+1} = z^n(x^{n+1}+yz^n)-yz^{2n}
\]
\[
x^ny^n = x^n(y^n-z^nx)+z^nx^{n+1}
\]
\[
y^{2n+1} = y^{n+1}(y^n-z^nx)+y^nx(yz^n+x^{n+1})-x^2(x^ny^n)
\]
\[
x^nyz^n = -y(z^{n^4}-z^nx^n)+yz^{n^4}
\]
\[
x^{2n+1} = x^n(x^{n+1}-xz^{n^4-n}+yz^n)+x^{n+1}z^{n^4-n}-x^nyz^n
\]
\[
z^{n^4+2n} = z^{2n}(z^{n^4}-z^nx^n)-x^nz^{3n}
\]

\bigskip

Elements on the left can be seen to be in $I_n$ as they are put in terms of elements already known to be in $I_n$ on the righthand side of each equation.

\bigskip

Because $R$ is regular, $H^i(I_n;R) = 0$ for $i<3$, and so the long exact sequence of Koszul cohomology yields \[
0 \rightarrow H^1(I_n;\kappa[[z]]) \rightarrow H^2(I_n;M) \rightarrow 0.
\]  Because $z^{n^4}$ is not a zerodivisor on $\kappa[[z]]$ and the image of $I_n$ in  $\kappa[[z]]$ is $(z^{n^4},0,0)$, we have that  \[
H^1(I_n;N) \cong H^0(0,0; \kappa[[z]]/(z^{n^4})) \cong R/(I_n+(x,y)),\] whose length is $n^4$.  From the long exact sequence above, $n^4$ must also be the length of $H^2(I_n;M)$.  Now because \[\displaystyle \lim_{n \to\infty}\dfrac{\ell(H^2(I_n;M))}{\ell(R/I_n)} \geq  \displaystyle \lim_{n \to\infty}  \dfrac{n^4}{n^4+2n+(2n+1)^2(3n)}  = 1> 0,\] $M$ is not 2-effaceable and so in particular is not asymptotically Cohen--Macaulay.

\bigskip 

We now consider all dimensions $d \geq 3$.  Let $R = \kappa[[x,y,z, v_1, \ldots, v_{d-3}]]$, $M = (x,y,v_1, \ldots, v_{d-3})R$, or $R = V[[y,z, v_1, \ldots, v_{d-3}]]$ and $M = (p,y, v_1, \ldots, v_{d-3})$, in which case we will denote $p$ by $x$ below, or $R = V[[x,y, v_1, \ldots, v_{d-3}]]$ and $M =(x,y, v_1, \ldots, v_{d-3})$ in which case we will denote  $p$ by $z$ below.  In all cases, take $N = R/M$.  From the short exact sequence $0 \rightarrow M \rightarrow R \rightarrow N \rightarrow 0$ and the fact that $R$ is regular, we know $H^i(I_n;M) \cong H^{i-1}(I_n; \kappa[[z]])$ or $H^i(I_n;M) \cong H^{i-1}(I_n;V)$ for all $i \leq d-1$.  We aim to show that $N$ is not 1-effaceable and so that $M$ is not 2-effaceable.  

We define $I_n = (f_1, \ldots, f_d)$ where $f_1 = z^t-z^nx^n$, $f_2 = x^{n+1}-xz^{t-n}+yz^n$, \[
f_3 = y^n+v_1z^n-v_2z^n+\cdots+(-1)^{i+1}v_iz^n+\cdots+(-1)^{d-2}v_{d-3}z^n+(-1)^{d-3}xz^n,
\] and \[
f_{i+3} = v_i^n+v_iz^{t-n}-(v_{i+1}-v_{i+2}+\cdots+(-1)^{d+i}(v_{d-3})+(-1)^{d+i+1}x)^n+v_iz^n-v_ix^n
\] for $1 \leq i \leq d-3$ and some $t \in \N$.  We will eventually choose $t$ to be very large relative to $n$.

As in the 3-dimensional case, we may use the first two equation to show $yz^{2n} \in I_n$.   We then find \[
\left(\sum_{i=1}^{d-3}(-1)^{i+1}v_iz^{3n}\right)+(-1)^{d-1}(xz^{3n}) =z^{2n}f_3-yz^{2n}  \in I_n.
\]  We will now show by induction on $i$ that $v_iz^{3n^{i+1}+n^i+\cdots+n}$ and \[
\left(\sum_{j=i+1}^{d-3}(-1)^{j}v_jz^{3n^{i+2}+n^{i+1}+\cdots+n^2+n}\right)+(-1)^{d}(xz^{3n^{i+2}+n^{i+1}+\cdots+n^2+n})
\] are elements of $I_n$ for all $1 \leq i \leq d-3$.  If $i=1$, then, using that modulo $I_n$ \[
v_1z^{3n} \equiv \left(\sum_{j=2}^{d-3}(-1)^{j}v_jz^{3n}\right)+(-1)^{d}(xz^{3n})\]
 implies that modulo $I_n$, \[
 (v_1z^{3n})^n \equiv \left(\left(\sum_{j=2}^{d-3}(-1)^{j}v_iz^{3n}\right)+(-1)^{d}(xz^{3n})\right)^n.
 \] We compute \[
-v_1z^{3n^2+n} = v_1z^{3n^2-n}(z^t-z^nx^n)-z^{3n^2}f_4+(v_1z^{3n})^n-\left(\left(\sum_{j=2}^{d-3}(-1)^{j}v_iz^{3n}\right)+(-1)^{d}(xz^{3n})\right)^n,
\]  where the right-hand side consists of elements known to be in $I_n$.  It follows that \[
\left(\sum_{j=2}^{d-3}(-1)^{j}v_jz^{3n^2+n}\right)+(-1)^{d}(xz^{3n^2+n}) = -z^{3n^2}f_3+v_1z^{3n^2+n}+y^nz^{3n^2} \in I_n.
\]

For the inductive step, we compute \begin{align*}
&-v_{i+1}z^{3n^{i+2}+n^{i+1}+\cdots+n^2+n} \\
&= v_{i+1}z^{3n^{i+2}+n^{i+1}+\cdots+n^2-n}(z^t-z^nx^n)-z^{3n^{i+2}+n^{i+1}+\cdots+n^2}f_{i+4}+(v_{i+1}z^{3n^{i+1}+n^{i}+\cdots+n^2+n})^n\\
&-\left(\left(\sum_{j=i+2}^{d-3}v_jz^{3n^{i+1}+n^{i}+\cdots+n^2+n}\right)+(-1)^{d-1}xz^{3n^{i+1}+n^{i}+\cdots+n^2+n}\right)^n \in I_n,
\end{align*} from which it follows that \begin{align*}
&\left(\sum_{j=i+2}^{d-3}(-1)^{j}v_jz^{3n^{i+2}+n^{i+1}+\cdots+n^2+n}\right)+(-1)^{d}(xz^{3n^{i+2}+n^{i+1}+\cdots+n^2+n})\\
& = -z^{3n^{i+2}+n^{i+1}+\cdots+n^2}f_3 + \left(\sum_{j=1}^{i+1} (-1)^{j+1} v_jz^{3n^{i+2}+n^{i+1}+\cdots+n^2+n}\right)+y^nz^{3n^{i+2}+n^{i+1}+\cdots+n^2} \in I_n.
\end{align*}

In particular, we have that $v_{d-3}z^{3n^{d-2}+n^{d-3}+\cdots+n^2+n} \in I_n$ and that
\[
(-1)^{d-3}v_{d-3}z^{3n^{d-2}+n^{d-3}+\cdots+n^2+n} +(-1)^d xz^{3n^{d-2}+n^{d-3}+\cdots+n^2+n} \in I_n,
\]  and so $xz^{3n^{d-2}+n^{d-3}+\cdots+n^2+n} \in I_n$ as well.

  In particular, $xz^{3n^{d-2}+n^{d-3}+\cdots+n^2+n}, yz^{3n^{d-2}+n^{d-3}+\cdots+n^2+n}$ and $v_iz^{3n^{d-2}+n^{d-3}+\cdots+n^2+n}$ are elements of $I_n$ for all $1 \leq i \leq d-3$.  Now using $f_2 = x^{n+1}-xz^{t-n}+yz^n$, we notice that modulo $I_n$ \[(x^{n+1})^{3n^{d-3}+n^{d-4}+\cdots+n^2+n}  \equiv (xz^{t-n}-yz^n)^{3n^{d-3}+n^{d-4}+\cdots+n^2+n} \equiv 0
\] and then that \begin{align*}
&z^{t+(3n^{d-2}+n^{d-3}+\cdots+n^2)} \\
&= z^{(3n^{d-2}+n^{d-3}+\cdots+n^2)}(z^t-z^nx^n)+(x^{n-1})(xz^{3n^{d-2}+n^{d-3}+\cdots+n^2+n}) \in I_n.  
\end{align*} We may also use $f_3 = y^n+v_1z^n-v_2z^n+\cdots+(-1)^{i+1}v_iz^n+\cdots+(-1)^{d-2}v_{d-3}z^n+(-1)^{d-3}xz^n$ to see that modulo $I_n$ \[
(y^n)^{3n^{d-3}+n^{d-4}+\cdots+n+1}  \equiv \left(\left(\sum_{j=1}^{d-3}(-1)^{j}v_iz^n\right)+(-1)^{d}xz^n\right)^{3n^{d-3}+n^{d-4}+\cdots+n+1} \equiv 0.
\]  We will now show by induction on $k$ that for $0 \leq k \leq d-4$\[
v_{d-3-k}^{(n+1)(3n^{d-3}+n^{d-4}+\cdots+n^2+n)+(3n^{d-2}+n^{d-3}+\cdots+n^2+n)} \in I_n.\]
  Set $N_0 = (n+1)(3n^{d-3}+n^{d-4}+\cdots+n^2+n)+(3n^{d-2}+n^{d-3}+\cdots+n^2+n)$.  We choose this $N_0$ because $x^{(n+1)(3n^{d-3}+n^{d-4}+\cdots+n^2+n)} \in I_n$ and $v_iz^{(3n^{d-2}+n^{d-3}+\cdots+n^2+n)} \in I_n$ for each $1 \leq i \leq d-3$, and so any product of $N_0$ elements, each of which is divisible either by $x$ or by $z^n$, is an element of $I_n$ provided that whenever it is not divisible by $x^{(n+1)(3n^{d-3}+n^{d-4}+\cdots+n^2+n)}$ it is divisible by some $v_i $ for $1 \leq i \leq d-3$.  When $k=0$, we use $f_d  = v_{d-3}^n+v_{d-3}z^{t-n}+x^n+v_{d-3}z^n-v_{d-3}x^n$ to see that modulo $I_n$\[
  v_{d-3}^{N_0} \equiv (-v_{d-3}z^{t-n}-x^n-v_{d-3}z^n+v_{d-3}x^n)^{N_0} \equiv 0.
  \]  For the inductive step, we set $N_k = 2N_{k-1}$ and use \begin{align*}
  &f_{d-k} = v_{d-3-k}^n+v_{d-3-k}z^{t-n}\\
 & -(v_{d-3-k+1}-v_{d-3-k+2}+\cdots+(-1)^{d+d-3-k}(v_{d-3})+(-1)^{d+d-3-k+1}x)^n+v_{d-3-k}z^n-v_{d-3-k}x^n.
  \end{align*} Modulo $I_n$
\begin{align*}
&v_{d-3-k}^{N_k} \equiv \left(-v_{d-3-k}z^{t-n}+ \left(\left(\sum_{j = d-k-2}^{d-3}(-1)^{j+d+k} v_j \right)+(-1)^kx \right)^n-v_{d-k-3}z^n+v_{d-k-3}x^n \right)^{N_k} \equiv 0,
\end{align*} by the pigeon-hole principle because the product of any $N_{k-1}$ elements, each of which is a multiple of $z^{t-n}, v_{d-3-k}z^n, x$, or $v_j$ for $d-1-k \leq j \leq d-3$, is in $I_n$ and $v_{d-2-k}^{N_{k-1}} \in I_n$.  Using $k = d-4$ for $v_1 = v_{d-3-(d-4)}$, we have $N_{d-4} = 2^{d-4}(n+1)(3n^{d-3}+n^{d-4}+\cdots+n^2+n)+(3n^{d-2}+n^{d-3}+\cdots+n^2+n)$, and so $v_1^{2^{d-4}(n+1)(3n^{d-3}+n^{d-4}+\cdots+n^2+n)+(3n^{d-2}+n^{d-3}+\cdots+n^2+n)} \in I_n$, and, more generally, $v_i^{2^{d-4}(n+1)(3n^{d-3}+n^{d-4}+\cdots+n^2+n)+(3n^{d-2}+n^{d-3}+\cdots+n^2+n)} \in I_n$ for all $1 \leq i \leq d-3$.  

The attentive reader will notice that efforts have not been made to keep minimal the degrees of polynomials appearing in the above expressions.  It is now clear that $R/I_n$ is spanned by $xz^j$, $yz^j$, and $v_iz^j$ for $j < {(3n^{d-2}+n^{d-3}+\cdots+n^2+n)}$ and $1 \leq i \leq d-3$ together with $z^j$ for $j<{t+(3n^{d-2}+n^{d-3}+\cdots+n^2)}$ and $x^\alpha_{x}y^{\alpha_y}v_1^{\alpha_1} \cdots v_{d-3}^{\alpha_{d-3}}$ with $\alpha_y<n(3n^{d-3}+n^{d-4}+\cdots+n+1)$ and $\alpha_x<(n+1)(3n^{d-3}+n^{d-4}+\cdots+n^2+n)$, and $\alpha_i<2^{d-4}(n+1)(3n^{d-3}+n^{d-4}+\cdots+n^2+n)+(3n^{d-2}+n^{d-3}+\cdots+n^2+n)$.  Hence, for sufficiently large $n$, \[
\ell(R/I_n) \leq t+(3n^{d-2}+n^{d-3}+\cdots+n^2)+(4(2n)^{d-2})^d.
\] The term $t+(3n^{d-2}+n^{d-3}+\cdots+n^2)$ counts powers of $z$, and $(4(2n)^{d-2})^d$ bounds ways to pick an allowable monomial that is not a power of $z$ for $n>>0$.  As in the 3-dimensional case, $H^2(I_n;M) \cong H^1(I_n;N) \cong H^0(0,0; \kappa[[z]]/(z^{t})) \cong R/(I_n+(x,y, v_1, \ldots, v_{d-3}))$, whose length is easily seen to be $t$.  Because $R/I_n$ surjects onto $R/(I_n+(x,y, v_1, \ldots, v_{d-3}))$, we have $\ell(H^2(I_n;M)) \leq \ell(R/I_n)$.  Any choice of $t$ much larger than $(3n^{d-2}+n^{d-3}+\cdots+n^2)+(4(2n)^{d-2})^d$,
  for example $t = n^{d^2}$, will give \[
1 \geq \lim_{n \rightarrow \infty} \dfrac{\ell(H^2(I_n;M))}{\ell(R/I_n)} \geq \lim_{n \rightarrow \infty} \dfrac{t}{t+(3n^{d-2}+n^{d-3}+\cdots+n^2)+(4(2n)^{d-2})^d} = 1.
\] This computation demonstrates that neither $N$ nor $M$ is asymptotically Cohen--Macaulay.  In particular, $N$ is not 1-effaceable, and $M$ is not 2-effaceable.  
\end{proof}

\begin{corollary}\label{s>1}
If $R = V[[x_1, \ldots, x_{d-1}]]$ where $V = (V, pV, \kappa)$ is a  discrete valuation ring, $M = (p^k, x_1, \ldots, x_{d-2})R$, and $N = R/M \cong \dfrac{V}{(p^k)}[[x_{d-1}]]$ with $d \geq 2$ and $k \geq 1$, then $N$ is not $1$-effaceable and $M$ is not $2$-effaceable.  
\end{corollary}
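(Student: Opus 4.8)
The plan is to bootstrap from Proposition~\ref{basecase}, which is exactly the case $k = 1$ when $d \geq 3$: a short exact sequence passes from ``$N$ not $1$-effaceable'' to ``$M$ not $2$-effaceable'', a filtration of $N$ by copies of $\kappa[[x_{d-1}]]$ reduces general $k$ to the case $k = 1$, and a small explicit computation handles $d = 2$. For the first reduction, when $d \geq 3$ the ring $R$ is regular, so $H^j(I;R) = 0$ for every parameter ideal $I$ and every $j < d$; the long exact sequence of Koszul cohomology of $0 \to M \to R \to N \to 0$ then gives $H^2(I;M) \cong H^1(I;N)$, so any sequence $I_n \subseteq m^n$ showing $N$ is not $1$-effaceable also shows $M$ is not $2$-effaceable. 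When $d = 2$ the list $x_1,\dots,x_{d-2}$ is empty and $M = (p^k)R \cong R$ (as $p^k$ is a nonzerodivisor in the domain $R$), so $H^2(I;M) \cong R/I$ has length $\ell(R/I)$ for every parameter ideal $I$ and $M$ is not $2$-effaceable for trivial reasons.

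For ``$N$ not $1$-effaceable'' with $d \geq 3$ and $k = 1$ we are in the mixed-characteristic case of Proposition~\ref{basecase} ($M = (p,x_1,\dots,x_{d-2})$), which produces an explicit sequence of parameter ideals $I_n = (f_1,\dots,f_d) \subseteq m^n$ with $f_1 = x_{d-1}^{t} - x_{d-1}^{n}p^{n}$ for which $\ell(H^1(I_n;\kappa[[x_{d-1}]]))/\ell(R/I_n) \to 1$. For $k \geq 2$, I would use the short exact sequence
\[
0 \to p^{k-1}N \to N \to (V/p^{k-1})[[x_{d-1}]] \to 0 ,
\]
observing that $p^{k-1}N$ is cyclic with annihilator $(p,x_1,\dots,x_{d-2})$, so $p^{k-1}N \cong \kappa[[x_{d-1}]]$. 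The key point is that the image of $f_1$ in $(V/p^{k-1})[[x_{d-1}]]$ is $x_{d-1}^{t}$ (since $p^{k-1}$ divides $p^{n}$ once $n \geq k-1$), which is a nonzerodivisor there because the only associated prime of $(V/p^{k-1})[[x_{d-1}]]$ is $(p,x_1,\dots,x_{d-2})$; hence $H^0(I_n;(V/p^{k-1})[[x_{d-1}]]) = 0$, and the long exact sequence shows $H^1(I_n;\kappa[[x_{d-1}]])$ injects into $H^1(I_n;N)$. Then $\ell(H^1(I_n;N)) \geq \ell(H^1(I_n;\kappa[[x_{d-1}]]))$, the ratio against $\ell(R/I_n)$ still tends to $1$, and $N$ is not $1$-effaceable.

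It remains to treat ``$N$ not $1$-effaceable'' when $d = 2$, so $R = V[[x_1]]$ and $N = (V/p^k)[[x_1]]$. The long exact sequence of $0 \to R \xrightarrow{p^k} R \to N \to 0$ together with $H^j(I;R) = 0$ for $j < 2$ gives $0 \to H^1(I;N) \to R/I \xrightarrow{p^k} R/I \to H^2(I;N) \to 0$, so $\ell(H^1(I;N)) = \ell(R/(I+(p^k))) \geq \ell(R/(I+(p)))$ for every parameter ideal $I$. It therefore suffices to find parameter ideals $I_n \subseteq m^n$ with $\ell(R/(I_n+(p)))/\ell(R/I_n)$ bounded away from $0$, and $I_n = (p^{n} - x_1^{n^2},\, p\, x_1^{n-1})$ does the job: modulo $p$ this ideal equals $(x_1^{n^2})$, so $\ell(R/(I_n+(p))) = n^2$, while the relations $p\,x_1^{n-1} = 0$ and $p^{n} = x_1^{n^2}$ in $R/I_n$ force $x_1^{n^2+n-1} = 0$ and $p^{n+1} = 0$, whence a monomial count yields $\ell(R/I_n) \leq 2n^2$ and the ratio is at least $1/2$. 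The obstacle I anticipate is conceptual rather than computational: one must notice that the particular generator $f_1$ supplied by Proposition~\ref{basecase} is a nonzerodivisor on $(V/p^{k-1})[[x_{d-1}]]$, which is precisely what collapses the filtration argument onto the already-known base case; verifying that the $I_n$ above are $m$-primary and carrying out the two monomial counts are routine.
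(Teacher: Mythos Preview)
Your argument is correct and follows essentially the same route as the paper: both use the filtration $0 \to p^{k-1}N \to N \to (V/p^{k-1})[[x_{d-1}]] \to 0$, identify $p^{k-1}N \cong \kappa[[x_{d-1}]]$, and reduce to the case $k=1$ of Proposition~\ref{basecase} via the injection $H^1(I;p^{k-1}N)\hookrightarrow H^1(I;N)$, then pass to $M$ via $H^2(I;M)\cong H^1(I;N)$.

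Two minor points of comparison. First, the paper obtains $H^0\bigl(I;(V/p^{k-1})[[x_{d-1}]]\bigr)=0$ for \emph{every} parameter ideal simply by noting that this module has depth~$1$; you instead verify it only for the specific $I_n$ supplied by Proposition~\ref{basecase}, by checking that the image of $f_1$ is a nonzerodivisor once $n\geq k-1$. The depth observation is shorter and independent of the particular construction. Second, you treat $d=2$ explicitly, with your own parameter ideals $(p^n-x_1^{n^2},\,p\,x_1^{n-1})$; the paper's proof of this corollary invokes Proposition~\ref{basecase} for the base case, which literally covers only $d\geq 3$, and defers $d=2$ to Proposition~\ref{d=2} later in the section. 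Your monomial count for $\ell(R/I_n)\leq 2n^2$ is correct, so your self-contained treatment of $d=2$ is a small bonus.
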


\begin{proof}
We proceed by induction on $k$.  The case $k = 1$ is Proposition \ref{basecase}.  For $k>1$, we use the short exact sequence $0 \rightarrow p^{k-1}N \rightarrow N \rightarrow \dfrac{V}{(p^{k-1})}[[x_{d-1}]] \rightarrow 0$ and the fact that $\dfrac{V}{(p^{k-1})}[[x_{d}]]$ has depth 1 to obtain an injection $H^1(I;p^{k-1}N) \hookrightarrow H^1(I;N)$ from the long exact sequence of Koszul cohomology for every parameter ideal $I$.  The result now follows from the isomorphisms $p^{k-1}N \cong \kappa[[x_{d-1}]]$ and $H^1(I;N) \cong H^2(I;M)$.   
\end{proof}

We now prove the results of Proposition \ref{basecase} and Corollary \ref{s>1} over a the ring that is a certain type of Gorenstein ring rather than regular. 

\begin{proposition} \label{basecasemixed}
Let $R = \dfrac{V[[x_1, \ldots, x_d]]}{(p^sx_1)}$ with $d \geq 3$ where $V = (V, pV, \kappa)$ is a  discrete valuation ring and $s \geq 1$. Then neither $M = (p, x_1, \ldots, x_{d-1})$ nor $M = (x_1, \ldots, x_d)$ nor $M = (p, x_2, \ldots, x_d)$ is asymptotically Cohen--Macaulay.  In all cases, $N = R/M$ is also not asymptotically Cohen--Macaulay.  In particular, $N$ is not $1$-effaceable and $M$ is not $2$-effaceable.
\end{proposition}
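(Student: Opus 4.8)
The proof will run in close parallel to that of Proposition~\ref{basecase}, the only genuinely new feature being that the ambient ring is no longer regular, merely Gorenstein. The first step is to record that, because $R$ is a hypersurface and hence Cohen--Macaulay of dimension $d$, every system of parameters on $R$ is a regular sequence, so that $H^i(I;R)=0$ for all $0\le i<d$ and every parameter ideal $I$. Feeding the short exact sequence $0\to M\to R\to N\to 0$ into the long exact sequence of Koszul cohomology then yields $H^i(I;M)\cong H^{i-1}(I;N)$ for $1\le i\le d-1$, and in particular, since $d\ge 3$, $H^2(I;M)\cong H^1(I;N)$ for every parameter ideal $I$. It therefore suffices, in each of the three cases, to produce a sequence of parameter ideals $I_n\subseteq m^n$ with $\ell(H^1(I_n;N))/\ell(R/I_n)\not\to 0$: this shows at once that $N$ is not $1$-effaceable and that $M$ is not $2$-effaceable, and hence that neither is asymptotically Cohen--Macaulay. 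One notes at the outset that in the three cases $N=R/M$ is, respectively, $\kappa[[x_d]]$, $V$, and $\kappa[[x_1]]$ --- a one-dimensional complete Cohen--Macaulay local ring, with uniformizer $\theta$ equal to $x_d$, $p$, $x_1$.

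The construction of $I_n=(f_1,\dots,f_d)$ will mimic the parameter ideal of Proposition~\ref{basecase}: $\theta$ plays the role of the ``free'' power series variable $z$ there, the generators of $M$ play the roles of the variables $x,y,v_1,\dots,v_{d-3}$ that are killed in passing to $N$, and $p$ is relabelled when it occurs among those generators, exactly as in the mixed-characteristic cases of Proposition~\ref{basecase}; in some of the roles one must use power series variables other than $x_1$ so that the relation $p^s x_1=0$ does not interfere. As there, we arrange that $f_1$ has image $\theta^{\,t}$ in $N$ and $f_2,\dots,f_d$ have image $0$, where $t=t(n)$ will be chosen far larger than a fixed polynomial in $n$, for instance $t=n^{d^2}$. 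Three points must then be checked: (i) that $f_1,\dots,f_d$ is a system of parameters on $R$ and $I_n\subseteq m^n$; (ii) that rerunning the same chain of algebraic identities, now inside $R$, i.e. modulo $p^s x_1$, forces $\theta^{\,t+O(\mathrm{poly}(n))}\in I_n$ together with a power of each generator of $M$ into $I_n$, so that $R/I_n$ is spanned over $\kappa$ by the powers $\theta^{\,j}$ with $j<t+O(\mathrm{poly}(n))$ together with only $O(\mathrm{poly}(n))$ further monomials, whence $\ell(R/I_n)\le t+O(\mathrm{poly}(n))$; and (iii) that, since $\theta^{\,t}$ is a nonzerodivisor on $N$ and the image of $I_n$ in $N$ is $(\theta^{\,t},0,\dots,0)$, we have $H^1(I_n;N)\cong H^0(0,\dots,0;N/\theta^{\,t}N)\cong R/(I_n+M)$, a module of length $t$. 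Combining (ii) and (iii) gives $\ell(H^2(I_n;M))/\ell(R/I_n)=\ell(H^1(I_n;N))/\ell(R/I_n)\to 1$, which is what is wanted.

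The principal obstacle is step (ii): carrying the identity manipulations of Proposition~\ref{basecase} through in the non-regular ring $R$ and keeping $\ell(R/I_n)$ polynomially bounded in the presence of the single extra relation $p^s x_1=0$. When $p$ is one of the generators of $M$ --- the cases $M=(p,x_1,\dots,x_{d-1})$ and $M=(p,x_2,\dots,x_d)$ --- one needs high powers of $p$ itself, not merely of power series variables, to be forced into $I_n$, and one must choose the $f_i$ so that the coupling $p^s x_1=0$ neither disturbs the computation of the image of $I_n$ in $N$ nor opens up an uncontrolled direction in $R/I_n$; once a suitable power of every generator of $M$ lies in $I_n$, the monomial count proceeds as in Proposition~\ref{basecase}, and in fact the extra relation can only delete monomials from a spanning set, so the bound $\ell(R/I_n)\le t+O(\mathrm{poly}(n))$ is, if anything, easier to obtain than in the regular case. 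The case $M=(x_1,\dots,x_d)$, in which $p\notin M$ takes the role of the free variable and $N=V$, is closest to the regular computation; the only new point there is that $V$ need not be a power series ring, but one still has $\ell_R(V/p^{\,t}V)=t$.
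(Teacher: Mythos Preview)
Your overall strategy --- reduce to showing $N$ is not $1$-effaceable via the long exact sequence from $0 \to M \to R \to N \to 0$, then construct parameter ideals $I_n$ modeled on those of Proposition~\ref{basecase} so that $\ell(N/I_n N)/\ell(R/I_n)$ stays bounded away from $0$ --- is exactly what the paper does. But step~(ii) as you have sketched it has a genuine gap, and your claimed limit of $1$ is too optimistic.

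The difficulty you gloss over is a counting problem. In Proposition~\ref{basecase} the ring has $d$ generators $x,y,z,v_1,\dots,v_{d-3}$ of its maximal ideal, and the ideal $M$ being killed has $d-1$ of them, leaving exactly one free variable $z$. Here $R = V[[x_1,\dots,x_d]]/(p^s x_1)$ has $d+1$ generators $p,x_1,\dots,x_d$ of its maximal ideal (modulo one relation), while each of the three choices of $M$ has height $d-1$ but is minimally generated by $d$ elements. If you simply ``relabel $p$ as one of the variables, exactly as in the mixed-characteristic cases of Proposition~\ref{basecase},'' you are one role short: the $d-1$ slots $x,y,v_1,\dots,v_{d-3}$ cannot absorb all $d$ generators of $M$, and the chain of identities from Proposition~\ref{basecase} will not on its own force a power of every generator of $M$ into $I_n$.

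The paper's fix is to substitute $x_1 - p$ for $x$ throughout the construction of the $f_i$: the single element $x_1 - p$ occupies the role of $x$, and the computation of Proposition~\ref{basecase} then goes through verbatim to put a large power of $x_1 - p$ (and of $y$, the $v_i$, and $z$) into $I_n$. One then multiplies $(x_1-p)^N \in I_n$ by $x_1$ and by $p$ separately --- exploiting $p^s x_1 = 0$ --- to force high powers of $x_1$ and of $p$ individually into $I_n$. This last step, however, costs something: the spanning set for $R/I_n$ must now include monomials $x_1 z^j$ and $p z^j$ in addition to the pure powers $z^j$, and the bound becomes $\ell(R/I_n) \le 3t + O(\mathrm{poly}(n))$ rather than $t + O(\mathrm{poly}(n))$. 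The paper explicitly notes that this modification prevents the limit from being $1$; the argument yields only $\liminf_n \ell(H^1(I_n;N))/\ell(R/I_n) \ge 1/3$, which is of course still enough. Your remark that ``the extra relation can only delete monomials from a spanning set, so the bound \dots\ is, if anything, easier'' misidentifies the obstruction: it is not the relation $p^s x_1 = 0$ that makes the count harder, but the presence of the extra generator of the maximal ideal that must be separately controlled.
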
 

\begin{proof}

As in Proposition \ref{basecase}, we consider the short exact sequence $0 \rightarrow M \rightarrow R \rightarrow N \rightarrow 0$ and, using that $R$ is Cohen--Macaulay and dimension at least 3, note that $H^2(I;M) \cong H^1(I;N)$ for every parameter ideal $I$.  We will again choose parameter ideals $I_n$ so that $H^1(I_n;N) \cong R/(I_n+P)$ for some prime ideal $P$ of $R$ with $R/P \cong V$ or $R/P \cong \kappa[[y]]$ for some indeterminate $y$.  

Set $I_n = (f_1, \ldots, f_d)$ where $f_1 = z^t-z^n(x-p)^n$, $f_2 = (x-p)^{n+1}-(x-p)z^{t-n}+yz^n$, \[
f_3 = y^n+v_1z^n-v_2z^n+\cdots+(-1)^{i+1}v_iz^n+\cdots+(-1)^{d-2}v_{d-3}z^n+(-1)^{d-3}(x-p)z^n,\]
 and \[
 f_{i+3} = v_i^n+v_iz^{t-n}-(v_{i+1}-v_{i+2}+\cdots+(-1)^{d+i}v_{d-3}+(-1)^{d+i+1}(x-p))^n+v_iz^n-v_ix^n
 \] for $1 \leq i \leq d-3$ and some $t \in \N$.  Notice that these are the same generators used in Propostion \ref{basecase} except that $x-p$ replaces $x$ everywhere it appears.  

%

We now follow identically the computations of Proposition \ref{basecase}, borrowing its notation, with $x$ replaced by $x-p$ to see that that $((x-p)^{n+1})^{3n^{d-3}+n^{d-4}+\cdots+n^2+n}$, $z^{t+(3n^{d-2}+n^{d-3}+\cdots+n^2)}$, \newline $(y^n)^{3n^{d-3}+n^{d-4}+\cdots+n+1}$, and $v_i^{2^{d-4}(n+1)(3n^{d-3}+n^{d-4}+\cdots+n^2+n)+(3n^{d-2}+n^{d-3}+\cdots+n^2+n)}$ for $1 \leq i \leq d-3$ are elements of $I_n$.  Multiplying the first element by $x$ or by $p$, we have $(x^{n+2})^{3n^{d-3}+n^{d-4}+\cdots+n^2+n}$ and $(p^{n+2})^{3n^{d-3}+n^{d-4}+\cdots+n^2+n}$ elements of $I_n$ as well.  Similarly, $(x-p)z^{3n^{d-2}+n^{d-3}+\cdots+n^2+n}$, \newline $yz^{3n^{d-2}+n^{d-3}+\cdots+n^2+n}$ and $v_iz^{3n^{d-2}+n^{d-3}+\cdots+n^2+n}$ are elements of $I_n$ for $1 \leq i \leq d-3$.  Again, we multiply the first of these last three elements by $x$ or by $p$ to find $x^2z^{3n^{d-2}+n^{d-3}+\cdots+n^2+n}$ and $p^2z^{3n^{d-2}+n^{d-3}+\cdots+n^2+n}$ as elements of $I_n$ as well.  It is this final modification that prevents us from establishing a limit of 1 as we were able to in Proposition \ref{basecase}.  
 
We now count elements spanning $R/I_n$ as in Proposition \ref{basecase}, with the different that we must also allow $xz^j$ and $pz^j$ for $j$ small enough that $z^j \notin I_n$.  We note that $R/I_n$ is spanned by $x^2z^j$, $p^2z^j$, $yz^j$, and $v_iz^j$ for $j < {3n^{d-2}+n^{d-3}+\cdots+n^2+n}$ and $1 \leq i \leq d-3$ together with $z^j$, $pz^j$, and $xz^j$ for $j<{t+(3n^{d-2}+n^{d-3}+\cdots+n^2)}$ and $x^{\alpha_{xp}}y^{\alpha_y}v_1^{\alpha_1} \cdots v_{d-3}^{\alpha_{d-3}}$ and $p^{\alpha_{xp}}y^{\alpha_y}v_1^{\alpha_1} \cdots v_{d-3}^{\alpha_{d-3}}$ with $\alpha_y<n(3n^{d-3}+n^{d-4}+\cdots+n+1)$ and $\alpha_{xp}<({n+2})({3n^{d-3}+n^{d-4}+\cdots+n^2+n})$, and $\alpha_i<2^{d-4}(n+1)(3n^{d-3}+n^{d-4}+\cdots+n^2+n)+(3n^{d-2}+n^{d-3}+\cdots+n^2+n)$.  Therefore, \[
\ell(R/I_n) \leq 3({t+(3n^{d-2}+n^{d-3}+\cdots+n^2)})+(8(2n)^{d-2})^d,
\] and so, if $\displaystyle \lim_{n \rightarrow \infty} \dfrac{\ell(N/I_nN)}{\ell(R/I_n)}$ exists, then \[
1 \geq \lim_{n \rightarrow \infty} \dfrac{\ell(N/I_nN)}{\ell(R/I_n)}\geq \lim_{n \rightarrow \infty}\dfrac{t}{\ell(R/I_n) \leq 3({t+(3n^{d-2}+n^{d-3}+\cdots+n^2)})+(8(2n)^{d-2})^d} =1/3
\]
for every $t>>n^{d^2}$.  Because $\dfrac{\ell(N/I_nN)}{\ell(R/I_n)} = \dfrac{\ell(H^1(I_n;N))}{\ell(R/I_n)} = \dfrac{\ell(H^2(I_n;M))}{\ell(R/I_n)}$, this completes the proof.
\end{proof}

\begin{corollary} \label{basecasemixednonprime}
Let $R = \dfrac{V[[x_1, \ldots, x_d]]}{(p^sx_1)}$ with $d \geq 3$ where $V = (V, pV, \kappa)$ is a  discrete valuation ring and $s \geq 1$. Then neither $M = (p^k, x_1, \ldots, x_{d-1})R$ nor $M = (p^k, x_2, \ldots, x_d)R$ for any $k \geq 1$ is asymptotically Cohen--Macaulay.  In all cases, $N = R/M$ is also not asymptotically Cohen--Macaulay.  In particular, $N$ is not $1$-effaceable and $M$ is not $2$-effaceable.
\end{corollary}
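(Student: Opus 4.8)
The plan is to transcribe the proof of Corollary~\ref{s>1} into the Gorenstein setting, using Proposition~\ref{basecasemixed} in place of Proposition~\ref{basecase}. Since $R$ is Cohen--Macaulay of dimension $d \geq 3$, we have $H^1(I;R) = H^2(I;R) = 0$ for every parameter ideal $I$, so the long exact sequence of Koszul cohomology associated to $0 \to M \to R \to N \to 0$ gives $H^2(I;M) \cong H^1(I;N)$. Hence it suffices in every case to prove that $N = R/M$ is not $1$-effaceable; then $M$ is not $2$-effaceable, and in particular neither $M$ nor $N$ is asymptotically Cohen--Macaulay.

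First I would identify $N$. When $M = (p^k, x_1, \ldots, x_{d-1})R$, the presence of $x_1$ in $M$ trivializes the relation $p^s x_1$, so $N \cong (V/p^k)[[x_d]]$ for every $k \geq 1$. When $M = (p^k, x_2, \ldots, x_d)R$, we get $N \cong V[[x_1]]/(p^s x_1, p^k)$; if $k \leq s$ this is again $(V/p^k)[[x_1]]$, because $p^s x_1 \in (p^k)V[[x_1]]$, while if $k > s$ the submodule $p^s N$ is isomorphic to $V/p^{k-s}$, hence of finite length, and $N/p^s N \cong (V/p^s)[[x_1]]$. Because altering $N$ by a finite-length submodule changes each $\ell(H^i(I_n;N))$ only by an additive constant independent of $I_n$, while $\ell(R/I_n) \to \infty$ along any sequence $I_n \subseteq m^n$, I may replace $k$ by $\min(k,s)$, and so in all cases reduce to $N \cong (V/p^k)[[y]]$ with $1 \leq k \leq s$, where $y = x_d$ or $y = x_1$ and $\kappa[[y]] \cong R/M'$ for $M' = (p, x_1, \ldots, x_{d-1})$ or $M' = (p, x_2, \ldots, x_d)$ respectively --- precisely the module ``$N = R/M$'' shown in Proposition~\ref{basecasemixed} not to be $1$-effaceable.

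The case $k = 1$ is then Proposition~\ref{basecasemixed} itself. For $k \geq 2$ I would use the short exact sequence
\[
0 \to p^{k-1}N \to N \to (V/p^{k-1})[[y]] \to 0 .
\]
Here $p^{k-1}N \cong \kappa[[y]]$, since $p^{k-1}(V/p^k) \cong V/pV$, and $(V/p^{k-1})[[y]]$ has depth at least $1$ over $R$ because $y$ is a nonzerodivisor on it; hence $H^0(I; (V/p^{k-1})[[y]]) = 0$ for every parameter ideal $I$, and the long exact sequence of Koszul cohomology yields an injection $H^1(I;\kappa[[y]]) \cong H^1(I; p^{k-1}N) \hookrightarrow H^1(I;N)$. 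Choosing a sequence $I_n \subseteq m^n$ with $\ell(H^1(I_n;\kappa[[y]]))/\ell(R/I_n) \not\to 0$, which exists since $\kappa[[y]]$ is not $1$-effaceable, the injection forces $\ell(H^1(I_n;N))/\ell(R/I_n) \not\to 0$, so $N$ is not $1$-effaceable, as needed.

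I expect the only real subtlety to be the bookkeeping for $M = (p^k, x_2, \ldots, x_d)R$ when $k > s$: there $N$ is genuinely not of the form $(V/p^k)[[x_1]]$, and one must verify that $p^s N$ has finite length, so that truncating $k$ down to $s$ is harmless, and that the submodule and depth computations used in the displayed sequence then go through for $(V/p^s)[[x_1]]$. Everything else is a direct translation of the argument of Corollary~\ref{s>1}.
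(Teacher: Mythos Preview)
Your proof is correct and follows essentially the same approach as the paper: reduce to showing $N$ is not $1$-effaceable via $H^2(I;M)\cong H^1(I;N)$, invoke Proposition~\ref{basecasemixed} for $k=1$, and for $k>1$ use the submodule $p^{k-1}N\cong\kappa[[y]]$ together with the depth-$1$ quotient to inject $H^1(I;\kappa[[y]])\hookrightarrow H^1(I;N)$, exactly as in Corollary~\ref{s>1}. The one place you diverge is the subcase $M=(p^k,x_2,\ldots,x_d)R$ with $k>s$: you pass to $N/p^sN\cong(V/p^s)[[x_1]]$ by killing the finite-length submodule $p^sN\cong V/p^{k-s}$, whereas the paper instead passes to the submodule $x_1N\cong(V/p^s)[[x_1]]$ by noting the quotient $N/x_1N\cong V/p^k$ has finite length. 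Both reductions land on the same module and are equally easy; either works.
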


\begin{proof}
We proceed by induction on $k$.  The case $k = 1$ is Proposition \ref{basecasemixed}.  For $k >1$, if $M = (p^k, x_1, \ldots, x_{d-1})R$ or if $M = (p^k, x_2, \ldots, x_{d})R$ and $k \leq s$, we may follow identically the proof of Corollary \ref{s>1}.  If $M = (p^k, x_2, \ldots, x_{d})R$ and $k>s$, observe that the short exact sequence $0 \rightarrow x_1N \rightarrow N \rightarrow N/x_1N \rightarrow 0$ gives rise to $H^0(I; V/(p^k)) \rightarrow H^1(I;x_1N) \rightarrow H^1(I;N)$ for each parameter ideal $I$ so that $\ell(H^1(I;N)) \geq \ell(H^1(I;x_1N))-k$.  Noting that $x_1N \cong V[[x_1]]/(p^s)$, which is the case of $s=k$, completes the proof.  
\end{proof}

\bigskip

\begin{theorem} \label{d>2} Suppose $R = \kappa[[x_1, \ldots, x_d]]$ with $d \geq 3$ and $M = (x_1, \ldots, x_{d-h})R$ or that $R = V[[x_1, \ldots, x_{d-1}]]$ and $M = (x_1, \ldots, x_{d-h})R$ or $M = (p, x_2, \ldots, x_{d-h})R$ or some $1\leq h<d-1$.  Then neither $M$ nor $N = R/M$ is asymptotically Cohen--Macaulay.  In particular, $M$ is not $h+1$-effaceable, and $N$ is not $h$-effaceable.  
\end{theorem}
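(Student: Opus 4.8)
The plan is to reduce everything to the case $h=1$, which is Proposition~\ref{basecase}, by inducting on $h$; the engine of the induction is that adjoining one power series variable to the ambient ring raises the relevant Koszul cohomology index by exactly one. First, to dispose of the assertion about $M$: from $0 \to M \to R \to N \to 0$ and the fact that $R$ is regular, so $H^i(I;R) = 0$ for every parameter ideal $I$ of $R$ and every $i < d$, the long exact sequence of Koszul cohomology gives $H^{h+1}(I;M) \cong H^h(I;N)$ for every parameter ideal $I$ (here $h+1 \le d-1$). Thus $M$ fails to be $(h+1)$-effaceable as soon as $N$ fails to be $h$-effaceable, and since $h+1 < d = \dim M$ this already forces $M$ not to be asymptotically Cohen--Macaulay; the assertion for $N$ follows at once from $N$ not being $h$-effaceable. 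So it suffices to prove that $N = R/M$ is not $h$-effaceable.

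I would prove this by induction on $h$, the base case $h=1$ being Proposition~\ref{basecase} (for the family $M = (p,x_2,\ldots,x_{d-h})R$ one first applies the automorphism of $V[[x_1,\ldots,x_{d-1}]]$ swapping $x_1$ and $x_{d-1}$ to put $M$ in the form $(p,x_1,\ldots,x_{d-2})R$ handled there). For the inductive step, let $2 \le h \le d-2$ and let $w$ be the last power series variable of $R$: thus $w = x_d$ when $R = \kappa[[x_1,\ldots,x_d]]$ and $w = x_{d-1}$ when $R = V[[x_1,\ldots,x_{d-1}]]$. Write $R = \bar{R}[[w]]$ with $\bar{R} = R/(w)$, and let $\bar{M} \subseteq \bar{R}$ be the ideal obtained from $M$ by deleting $w$, i.e.\ $(x_1,\ldots,x_{d-h})\bar{R}$ or $(p,x_2,\ldots,x_{d-h})\bar{R}$. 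Then $\bar{M}$ is again one of the permitted ideals, now with $\bar{d} = d-1$ and $\bar{h} = h-1$ (and $1 \le \bar{h} < \bar{d}-1$), and $\bar{N} := \bar{R}/\bar{M}$ satisfies $N = \bar{N}[[w]]$. By the inductive hypothesis $\bar{N}$ is not $(h-1)$-effaceable, so there is a sequence of parameter ideals $\bar{I}_n = (\bar{f}_1,\ldots,\bar{f}_{d-1}) \subseteq m_{\bar{R}}^n$ of $\bar{R}$ with $\ell(H^{h-1}(\bar{I}_n;\bar{N}))/\ell(\bar{R}/\bar{I}_n) \not\to 0$.

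Now set $I_n = (\bar{f}_1,\ldots,\bar{f}_{d-1},w^n)$, a parameter ideal of $R$ lying in $m_R^n$. Since the $\bar{f}_i$ lie in $\bar{R}$ and so involve no $w$, while $N = \bar{N}[[w]]$, the differentials of $K^\bullet(\bar{I}_n;-)$ act coefficientwise in $w$, and a termwise inspection gives $H^i(\bar{I}_n;N) \cong H^i(\bar{I}_n;\bar{N})[[w]]$ for all $i$; moreover each $H^i(\bar{I}_n;\bar{N})$, being a finitely generated $\bar{R}$-module annihilated by the $m_{\bar{R}}$-primary ideal $\bar{I}_n$, has finite length. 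Hence $w$ is a nonzerodivisor on $H^i(\bar{I}_n;N)$ and $\ell\big(H^i(\bar{I}_n;N)/w^n H^i(\bar{I}_n;N)\big) = n\cdot\ell(H^i(\bar{I}_n;\bar{N}))$. The standard exact sequence for adjoining the Koszul element $w^n$,
\[
0 \longrightarrow H^{h-1}(\bar{I}_n;N)/w^n H^{h-1}(\bar{I}_n;N) \longrightarrow H^h(I_n;N) \longrightarrow \Ann_{H^h(\bar{I}_n;N)}(w^n) \longrightarrow 0,
\]
then has vanishing last term, so $\ell(H^h(I_n;N)) = n\cdot\ell(H^{h-1}(\bar{I}_n;\bar{N}))$; similarly $R/I_n \cong (\bar{R}/\bar{I}_n)[[w]]/w^n$ has length $n\cdot\ell(\bar{R}/\bar{I}_n)$. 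Therefore
\[
\frac{\ell(H^h(I_n;N))}{\ell(R/I_n)} = \frac{\ell(H^{h-1}(\bar{I}_n;\bar{N}))}{\ell(\bar{R}/\bar{I}_n)} \not\longrightarrow 0,
\]
and $N$ is not $h$-effaceable, which closes the induction.

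I expect the genuinely fussy part to be bookkeeping rather than homology: one must arrange the three permitted families so that ``delete $w$'' lands back inside the list, verify that the numerology $\bar{d} = d-1$, $\bar{h} = h-1$ stays in range throughout, and confirm that the base case of the $(p,x_2,\ldots)$ family is indeed Proposition~\ref{basecase} after the relabeling. The homological inputs---the identity $H^i(\bar{I}_n;N) \cong H^i(\bar{I}_n;\bar{N})[[w]]$ and the one-more-element exact sequence---are routine.
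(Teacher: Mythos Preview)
Your argument is correct and complete; the bookkeeping you flag as ``fussy'' does check out (in particular $\bar d=d-1\ge 3$ whenever $h\ge 2$, since $h<d-1$). However, your route differs from the paper's. The paper keeps the ambient ring $R$ fixed throughout the induction on $h$ and transfers non-effaceability through short exact sequences of $R$-modules: writing $E=(x_1,\ldots,x_{d-(h+1)})R$, $D=(x_1,\ldots,x_{d-h})R$, and $C=R/E$, it uses $0\to E\to D\to aC\to 0$ together with a depth count to inject $H^{h+1}(I;D)\hookrightarrow H^{h+1}(I;aC)$, and then $0\to E\to R\to C\to 0$ to inject $H^{h+1}(I;C)\hookrightarrow H^{h+2}(I;E)$. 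By contrast, you change the ambient ring at each step, writing $R=\bar R[[w]]$ and lifting the witnessing parameter ideals from $\bar R$ via $I_n=(\bar I_n,w^n)$; your key input is the one-more-element Koszul sequence together with the flatness isomorphism $H^i(\bar I_n;N)\cong H^i(\bar I_n;\bar N)[[w]]$. Your approach is arguably more elementary (no depth bookkeeping beyond regularity of $R$) and gives the sharper quantitative statement that the ratio $\ell(H^h(I_n;N))/\ell(R/I_n)$ is literally preserved under the passage $\bar R\rightsquigarrow R$; the paper's approach has the virtue of staying inside a single ring and making transparent the chain of injections among the Koszul cohomology modules of the various ideals.
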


\begin{proof} We will proceed by induction on $h$.  The base case $h=1$ is Proposition \ref{basecase}.  For the inductive step, we consider the following short exact sequences\[
0 \rightarrow (x_1, \ldots, x_{d-(h+1)})R \rightarrow (x_1, \ldots, x_{d-h})R \rightarrow (x_{d-h})\kappa[[x_{d-h}, \ldots, x_d]] \rightarrow 0
\] or 
\[0 \rightarrow (x_1, \ldots, x_{d-(h+1)})R \rightarrow (x_1, \ldots, x_{d-h})R \rightarrow (x_{d-h})V[[x_{d-h}, \ldots, x_{d-1}]] \rightarrow 0\] or 
\[0 \rightarrow (x_2, \ldots, x_{d-h})R \rightarrow (p,x_2, \ldots, x_{d-h})R \rightarrow (p)V[[x_1, x_{d-h+1}, \ldots, x_{d-1}]] \rightarrow 0
\] 
Call the module appearing as the cokernel in each of these sequences $aC$ where $a = x_{d-h}$ or $a = p$ and $C = \kappa[[x_{d-h}, \ldots, x_d]]$, $C = V[[x_{d-h}, \ldots, x_{d-1}]]$, or $C = V[[x_1, x_{d-h+1}, \ldots, x_{d-1}]]$.  Call the middle term of each sequence $D$ and the left-hand term $E$.  We observe that the depth of $E$ is $h+2$, and the depths of $D$ and $aC$ are $h+1$.  We, therefore, have \[
0 \rightarrow H^{h+1}(I_n;D) \rightarrow H^{h+1}(I_n;aC)
\] which shows that $aC\cong C$ is not $(h+1)$-effaceable since $D$ is not by the inductive hypothesis.  

We now consider another short exact sequence: \[
0 \rightarrow E \rightarrow R \rightarrow C \rightarrow 0 
\]  for each $C$ and $E$ defined above.  Because $R$ has depth $d \geq h+2$ by assumption, the long exact sequence of Koszul homology yields \[
0 \rightarrow H^{h+1}(I_n; C) \rightarrow H^{h+2}(I_n;E),
\] and so $E$ is not $(h+2)$-effaceable because $C$ is not $(h+1)$-effaceable, completing the induction.  Because $R$ is Cohen--Macaulay, the short exact sequence $0 \rightarrow M \rightarrow R \rightarrow N \rightarrow 0$ gives immediately that whenever $M$ is not $h+1$-effaceable $N$ is not $h$-effaceable for $1 \leq h<d-1$.

\end{proof}

\begin{corollary} \label{d>2notprime}
If $R = V[[x_1, \ldots, x_{d-1}]]$ and $M = (p^k, x_2, \ldots, x_{d-h})$ for some $k>1$ and $1\leq h<d-1$, then neither $M$ nor $N = R/M$ is asymptotically Cohen--Macaulay.  In particular, $M$ is not $h+1$-effaceable and $N$ is not $h$-effaceable.  
\end{corollary}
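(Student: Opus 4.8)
The plan is to reduce, by peeling off a power of $p$, to the case $k=1$, which is already covered by Theorem~\ref{d>2}; this is exactly the device used in Corollary~\ref{s>1} and Corollary~\ref{basecasemixednonprime}, now carried out for general $h$ rather than for $h=1$.

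Concretely, I would set $N = R/(p^k,x_2,\ldots,x_{d-h})R$ and consider the short exact sequence
\[
0 \to p^{k-1}N \to N \to N/p^{k-1}N \to 0 .
\]
The first step is to identify the two outer terms. Since $R/(x_2,\ldots,x_{d-h})R \cong V[[x_1,x_{d-h+1},\ldots,x_{d-1}]]$ is a domain on which $p$ is a nonzerodivisor, the annihilator of $p^{k-1}$ in $N$ is exactly $(p,x_2,\ldots,x_{d-h})R/(p^k,x_2,\ldots,x_{d-h})R$, so $p^{k-1}N \cong R/(p,x_2,\ldots,x_{d-h})R$, which is precisely the module Theorem~\ref{d>2} asserts is not $h$-effaceable. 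On the other side, $N/p^{k-1}N \cong R/(p^{k-1},x_2,\ldots,x_{d-h})R \cong (V/(p^{k-1}))[[x_1,x_{d-h+1},\ldots,x_{d-1}]]$ is a power series ring in $h$ variables over the Artinian ring $V/(p^{k-1})$, hence a Cohen--Macaulay $R$-module of depth $h$; in particular $H^{h-1}(I;N/p^{k-1}N)=0$ for every parameter ideal $I$ of $R$.

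The second step is then immediate: the long exact sequence of Koszul cohomology attached to the displayed sequence gives an injection $H^h(I;p^{k-1}N)\hookrightarrow H^h(I;N)$ for every parameter ideal $I$, so any sequence $I_n\subseteq m^n$ witnessing that $p^{k-1}N \cong R/(p,x_2,\ldots,x_{d-h})R$ fails to be $h$-effaceable (supplied by Theorem~\ref{d>2}) also witnesses that $N$ is not $h$-effaceable. Finally, for $M$ itself I would use $0\to M\to R\to N\to 0$: since $R$ is regular of dimension $d\ge h+2$, both $H^h(I;R)$ and $H^{h+1}(I;R)$ vanish for every parameter ideal $I$, so $H^{h+1}(I;M)\cong H^h(I;N)$, and non-$h$-effaceability of $N$ yields non-$(h+1)$-effaceability of $M$; as $h+1<d=\dim M$ this shows $M$ is not asymptotically Cohen--Macaulay, and the assertion for $N$ follows just as in Theorem~\ref{d>2}. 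I do not expect a genuine obstacle here, since the argument is a routine adaptation of ones already in the paper; the only points needing a little care are the two module identifications in the short exact sequence and the depth computation for $N/p^{k-1}N$, both of which come down to writing $N$ modulo $(x_2,\ldots,x_{d-h})$ as a power series ring over $V$ and using that $p$ is a nonzerodivisor there.
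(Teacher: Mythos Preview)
Your argument is correct and is actually the cleaner of the two approaches. You reduce directly to the case $k=1$ by peeling off a power of $p$: the short exact sequence $0\to p^{k-1}N\to N\to N/p^{k-1}N\to 0$, together with the depth-$h$ computation for $N/p^{k-1}N\cong (V/(p^{k-1}))[[x_1,x_{d-h+1},\ldots,x_{d-1}]]$, yields the injection $H^h(I;p^{k-1}N)\hookrightarrow H^h(I;N)$, and then Theorem~\ref{d>2} applied to $p^{k-1}N\cong R/(p,x_2,\ldots,x_{d-h})$ finishes the job. The passage from $N$ to $M$ via $0\to M\to R\to N\to 0$ is exactly as in Theorem~\ref{d>2}.

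The paper takes a different route: it runs an induction on $h$ rather than on $k$, re-using the filtration technique of Theorem~\ref{d>2} with sequences such as
\[
0\to (p^k,x_2,\ldots,x_{d-h-1})R\to (p^k,x_2,\ldots,x_{d-h})R\to (x_{d-h})\,\dfrac{V}{(p^k)}[[x_{d-h},\ldots,x_{d-1}]]\to 0,
\]
with the base case $h=1$ handled by Corollary~\ref{s>1} (the paper's citation of Proposition~\ref{basecasemixed} here appears to be a slip, since that proposition concerns the non-regular Gorenstein ring). Your approach has the advantage of invoking Theorem~\ref{d>2} once as a black box and avoiding a second induction; the paper's approach keeps all the corollaries structurally parallel to Theorem~\ref{d>2}. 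Either way the content is the same, and your identifications of $p^{k-1}N$ and $N/p^{k-1}N$ and the depth check are all in order.
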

\begin{proof}
Using induction on $h$, the base case is Proposition \ref{basecasemixed}.  We now follow the proof of Theorem \ref{d>2}, using the short exact sequence \[0 \rightarrow (x_2, \ldots, x_{d-h})R \rightarrow (p^k, x_2, \ldots, x_{d-h})R \rightarrow (p^k)V[[x_{d-h}, \ldots, x_{d}]] \rightarrow 0
\] or 
\begin{align*}0 \rightarrow (p^k, x_2, &\ldots, x_{d-h-1})R \rightarrow \\
&(p^k, x_2, \ldots, x_{d-h})R \rightarrow (x_{d-h})\dfrac{V}{(p^k)}[[x_{d-h}, \ldots, x_{d-1}]] \rightarrow 0. \end{align*}
\end{proof}

\begin{theorem} \label{d>2mixed}
Let $R = \dfrac{V[[x_1, \ldots, x_d]]}{(p^sx_1)}$ with $d \geq 3$ where $V = (V, pV, \kappa)$ is a discrete valuation ring and $s \geq 1$.  Let $M = (p, x_1, \ldots, x_{d-h})R$ or $M = (p, x_2, \ldots, x_{d-h+1})R$ or $M = (x_1, \ldots, x_{d-h+1})R$ for some $1\leq h < d-1$.  Then neither $M$ nor $N = R/M$ is asymptotically Cohen--Macaulay.  In particular, $M$ is not $h+1$-effaceable, and $N$ is not $h$-effaceable.
\end{theorem}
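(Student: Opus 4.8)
The plan is to follow the strategy of Theorem~\ref{d>2} and Corollary~\ref{d>2notprime}: induct on $h$, at each step splitting off one generator, and read the needed inequalities off two long exact sequences of Koszul cohomology. The base case $h=1$ is exactly Proposition~\ref{basecasemixed}, which says that each of the three displayed ideals (read at $h=1$) is not $2$-effaceable and that the corresponding $N=R/M$ is not $1$-effaceable. I would first record the reduction that, for any level $h$ with $1\le h<d-1$, the statements ``$N$ is not $h$-effaceable'' and ``$M$ is not $(h+1)$-effaceable'' are equivalent: since $R$ is Gorenstein of dimension $d$ and $h+1<d$, the groups $H^h(I;R)$ and $H^{h+1}(I;R)$ vanish for every parameter ideal $I$, so the short exact sequence $0\to M\to R\to N\to 0$ identifies $H^h(I;N)$ with $H^{h+1}(I;M)$. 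Thus it suffices to propagate the single statement ``the level-$h$ ideal $M$ is not $(h+1)$-effaceable.''

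For the inductive step, assume this at level $h$ and let $M'$ be the level-$(h+1)$ ideal of the same shape, that is $(p,x_1,\dots,x_{d-(h+1)})R$, or $(p,x_2,\dots,x_{d-h})R$, or $(x_1,\dots,x_{d-h})R$ respectively, obtained by dropping the last $x$-generator $x_{d-h}$ (resp.\ $x_{d-h+1}$). In all three cases $R/M'$ is a regular local ring of dimension $h+1$ (killing $p$ or $x_1$ makes the relation $p^sx_1$ redundant), so $\depth(R/M')=h+1$, and $0\to M'\to R\to R/M'\to 0$ with $\depth R=d$ gives $\depth M'=h+2$; likewise $\depth M=h+1$ at level $h$. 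Now consider the short exact sequence $0\to M'\to M\to M/M'\to 0$: the cokernel is cyclic, generated by the image of the dropped variable, which is a nonzerodivisor on the regular ring $R/M'$, so $M/M'\cong N':=R/M'$. Because $\depth M'=h+2>h+1$, the long exact sequence of Koszul cohomology produces an injection $H^{h+1}(I;M)\hookrightarrow H^{h+1}(I;N')$ for every parameter ideal $I$, so $N'$ inherits from $M$ the failure to be $(h+1)$-effaceable. Applying the long exact sequence to $0\to M'\to R\to N'\to 0$ and using $H^{h+1}(I;R)=0$ (valid since $\depth R=d>h+1$) then yields an injection $H^{h+1}(I;N')\hookrightarrow H^{h+2}(I;M')$, so $M'$ is not $(h+2)$-effaceable. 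This closes the induction, and the ``not asymptotically Cohen--Macaulay'' assertions follow as in Proposition~\ref{basecasemixed} and Theorem~\ref{d>2}. (For $d=3$ only the base case occurs.)

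The only real friction I anticipate is bookkeeping rather than mathematics: (i) verifying that each of the three families is stable under ``drop the last $x$-generator,'' so that $M'$ is again one of the three listed shapes and the three inductions run in parallel without leaving the list; (ii) confirming the cokernel computation $M/M'\cong R/M'$ in each case, which reduces to observing that the dropped variable is part of a regular system of parameters of the regular ring $R/M'$, hence a nonzerodivisor there; and (iii) the depth computations over the non-regular Gorenstein ring $R=V[[x_1,\dots,x_d]]/(p^sx_1)$ — these are all obtained from the single sequence $0\to(\cdot)\to R\to R/(\cdot)\to 0$ via $\depth R=d$, but one should keep in mind that it is the variable $x_{d-h}$ or $x_{d-h+1}$, never the zerodivisor $x_1$, that is being peeled off, so no delicacy arises from $p^sx_1=0$.
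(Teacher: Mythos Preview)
Your proof is correct and follows essentially the same approach as the paper: induct on $h$ with Proposition~\ref{basecasemixed} as the base case, peel off one generator to pass from $M$ at level $h$ to $M'$ at level $h+1$, and run the two Koszul long exact sequences coming from $0\to M'\to M\to M/M'\to 0$ and $0\to M'\to R\to R/M'\to 0$ exactly as in Theorem~\ref{d>2}. The paper lists one additional short exact sequence (dropping $p$ rather than $x_{d-h}$ from the first family, which lands in the third family at the next level), but your three within-family sequences already suffice, and your explicit identification $M/M'\cong R/M'$ is precisely the computation the paper records as $aC\cong C$.
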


\begin{proof}
The base case $h=1$ is Proposition \ref{basecasemixed}.  For the inductive step, we follow the proof of Theorem \ref{d>2} using the short exact sequence \[
0 \rightarrow (p, x_1, \ldots, x_{d-h-1})R \rightarrow (p, x_1, \ldots, x_{d-h})R \rightarrow (x_{d-h})\kappa[[x_{d-h}, \ldots, x_d]] \rightarrow 0, 
\] \[
0 \rightarrow (x_1, \ldots, x_{d-h})R \rightarrow (p, x_1, \ldots, x_{d-h})R \rightarrow (p)V[[x_{d-h+1}, \ldots, x_d]] \rightarrow 0, 
\] \[
0 \rightarrow (p, x_2, \ldots, x_{d-h})R \rightarrow (p, x_2, \ldots, x_{d-h+1})R \rightarrow (x_{d-h+1})\kappa[[x_1, x_{d-h+1}, \ldots, x_d]] \rightarrow 0, 
\] or \[
0 \rightarrow (x_1, \ldots, x_{d-h})R \rightarrow (x_1, \ldots, x_{d-h+1})R \rightarrow (x_{d-h+1})V[[x_{d-h+1}, \ldots, x_d]] \rightarrow 0. 
\]
\end{proof}

\bigskip

\begin{corollary} \label{d>2mixednotprime}
Let $R = \dfrac{V[[x_1, \ldots, x_d]]}{(p^sx_1)}$ with $d \geq 3$ where $V = (V, pV, \kappa)$ is a discrete valuation ring and $s \geq 1$.  Let $M = (p^k, x_1, \ldots, x_{d-h})R$ or $M = (p^k, x_2, \ldots, x_{d-h+1})R$ for some $1\leq h < d-1$ and $k \geq 1$.  Then neither $M$ nor $N = R/M$ is asymptotically Cohen--Macaulay.  In particular, $M$ is not $h+1$-effaceable, and $N$ is not $h$-effaceable.
\end{corollary}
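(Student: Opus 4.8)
The plan is to argue by induction on $h$, taking Corollary \ref{basecasemixednonprime} as the base case $h=1$ and, for the inductive step, transcribing the proof of Theorem \ref{d>2mixed} with every occurrence of $p$ replaced by $p^k$.

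For the inductive step I would record the $p^k$-analogues of the short exact sequences used in Theorem \ref{d>2mixed}. For $M = (p^k, x_1, \ldots, x_{d-h})R$ one has
\[
0 \to (p^k, x_1, \ldots, x_{d-h-1})R \to (p^k, x_1, \ldots, x_{d-h})R \to (x_{d-h})\dfrac{V}{(p^k)}[[x_{d-h}, \ldots, x_d]] \to 0
\]
together with the auxiliary sequence $0 \to (p^k, x_1, \ldots, x_{d-h-1})R \to R \to \dfrac{V}{(p^k)}[[x_{d-h}, \ldots, x_d]] \to 0$, and for $M = (p^k, x_2, \ldots, x_{d-h+1})R$ one has
\[
0 \to (p^k, x_2, \ldots, x_{d-h})R \to (p^k, x_2, \ldots, x_{d-h+1})R \to (x_{d-h+1})C \to 0
\]
with $C = R/(p^k, x_2, \ldots, x_{d-h})$, together with $0 \to (p^k, x_2, \ldots, x_{d-h})R \to R \to C \to 0$. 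The point that makes the transcription work is that replacing $p$ by $p^k$ does not change any of the depth counts used in Theorem \ref{d>2mixed}: because $p$ is nilpotent in $R$, each quotient $R/(p^k, x_1, \ldots, x_j)$ is Cohen--Macaulay of the same dimension as $R/(p, x_1, \ldots, x_j)$, and $R$ itself is unchanged, so in the notation of the proof of Theorem \ref{d>2} the left-hand term $E$ still has depth $h+2$, the middle term $D$ and the cokernel $aC \cong C$ still have depth $h+1$, and $R$ is Cohen--Macaulay of dimension $d \ge h+2$. Consequently the long exact sequence of Koszul cohomology again produces the injections $H^{h+1}(I_n;D) \hookrightarrow H^{h+1}(I_n;aC)$ and $H^{h+1}(I_n;C) \hookrightarrow H^{h+2}(I_n;E)$ exactly as there, and then the surjection $H^h(I_n;N) \twoheadrightarrow H^{h+1}(I_n;M)$ coming from $0 \to M \to R \to N \to 0$ and the Cohen--Macaulayness of $R$; these propagate the failure of $(h+1)$-effaceability from the base case through all $h$ in the allowed range and deliver ``$M$ not $(h+1)$-effaceable'' and ``$N$ not $h$-effaceable''.

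The one point I expect to require genuine care — precisely the point responsible for the case split $k \le s$ versus $k > s$ in the proof of Corollary \ref{basecasemixednonprime} — is the interaction of $p^k$ with the defining relation $p^s x_1 = 0$ of $R$. This is confined to the family $M = (p^k, x_2, \ldots, x_{d-h+1})R$ and the intermediate ideals descended from it, since there $x_1$ is not among the generators and so the relation survives in the relevant quotients: when $k > s$ the quotient ring $C = R/(p^k, x_2, \ldots, x_{d-h})$ is no longer a power series ring over $V/(p^k)$ but rather $\dfrac{V}{(p^k)}[[x_1, x_{d-h+1}, \ldots, x_d]]/(p^sx_1)$, and one must check that it is still Cohen--Macaulay of dimension $h+1$ and that $x_{d-h+1}$ remains a nonzerodivisor on it (both hold, since $p^sx_1$ is nilpotent there), reducing where the finer structure of $C$ is needed to the case $k = s$ by the device $0 \to x_1 C \to C \to C/x_1 C \to 0$ used in Corollary \ref{basecasemixednonprime}, the correction in the resulting inequality $\ell(H^\bullet(I_n;C)) \ge \ell(H^\bullet(I_n;x_1 C)) - k$ being negligible against $\ell(R/I_n) \to \infty$. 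With that bookkeeping in place, the remainder of the argument is a transcription of the proof of Theorem \ref{d>2mixed}.
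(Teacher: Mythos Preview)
Your approach is genuinely different from the paper's: the paper inducts on $k$, using Theorem \ref{d>2mixed} (which already covers all $h$ when $k=1$) as the base case and then, for $k>1$, applying the submodule trick of Corollary \ref{s>1}/\ref{basecasemixednonprime} at the level of $N$ (i.e.\ $0\to p^{k-1}N\to N\to N/p^{k-1}N\to 0$, respectively $0\to x_1N\to N\to N/x_1N\to 0$). You instead induct on $h$, using Corollary \ref{basecasemixednonprime} as the base case.

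Your induction on $h$ does go through for the family $M=(p^k,x_1,\ldots,x_{d-h})R$, since there $x_1$ is among the generators and the intermediate quotient $C=R/(p^k,x_1,\ldots,x_{d-h-1})=V/(p^k)[[x_{d-h},\ldots,x_d]]$ is genuinely Cohen--Macaulay of dimension $h+1$, so $E$ has depth $h+2$ as needed. The gap is in the other family when $k>s$. Your assertion that $C=R/(p^k,x_2,\ldots,x_{d-h})\cong V/(p^k)[[x_1,x_{d-h+1},\ldots,x_d]]/(p^sx_1)$ is Cohen--Macaulay is false: the element $p^{k-1}$ is nonzero in $C$ and is annihilated by $(p,x_1)$, so $(p,x_1)$ is an embedded associated prime and $\depth C=h<h+1=\dim C$. (Nilpotence of $p^sx_1$ gives equal \emph{dimension}, not Cohen--Macaulayness; note also that $p$ is not nilpotent in $R$ itself.) Consequently $E=(p^k,x_2,\ldots,x_{d-h})R$ has depth only $h+1$, not $h+2$, and the injection $H^{h+1}(I;D)\hookrightarrow H^{h+1}(I;aC)$ that your inductive step relies on need not hold.

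Your proposed patch via $0\to x_1C\to C\to C/x_1C\to 0$ with a ``$-k$'' correction does not repair this: here $C/x_1C\cong V/(p^k)[[x_{d-h+1},\ldots,x_d]]$ is $h$-dimensional, not zero-dimensional, so $\ell\bigl(H^{h}(I_n;C/x_1C)\bigr)$ is not bounded by $k$ (indeed it grows with $n$), and the inequality $\ell(H^{h+1}(I_n;C))\ge \ell(H^{h+1}(I_n;x_1C))-k$ is not available. The device in Corollary \ref{basecasemixednonprime} worked precisely because in the $h=1$ case $N/x_1N\cong V/(p^k)$ is Artinian; that feature is lost for $h>1$. The paper sidesteps all of this by fixing $h$ and inducting on $k$, so that the depth bookkeeping of Theorem \ref{d>2mixed} (done once, at $k=1$) is never disturbed and the reduction for $k>1$ happens entirely inside $N$.
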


\begin{proof}
We follow the proof of Corollary \ref{basecasemixednonprime} using Theorem \ref{d>2mixed} for the base case $k = 1$.  
\end{proof}

\begin{proposition} \label{d=2} Let $R = \kappa[[x_1, \ldots, x_d]]$ or $R = V[[x_1, \ldots, x_{d-1}]]$ where $V  = (V, pV, \kappa)$ is a  discrete valuation ring and \[
N = \dfrac{\kappa[[x_1, \ldots, x_d]]}{(x_d)} \cong \kappa[[x_1, \cdots, x_{d-1}]],\] \[N = V[[x_1, \ldots, x_{d-1}]]/(p^k) \cong \dfrac{V}{(p^k)}[[x_1, \ldots, x_{d-1}]]\mbox{ for $k \geq 1$},\] or \[N = V[[x_1, \ldots, x_{d-1}]]/(x_{d-1}) \cong V[[x_1, \ldots, x_{d-2}]].\] Then $N$ is not $(d-1)$-effaceable for $d \geq 2$. \end{proposition}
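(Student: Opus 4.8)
In every case $N = R/gR$ for a nonzerodivisor non-unit $g$ of $R$ (namely $g = x_d$, $g = p^k$, or $g = x_{d-1}$), $R$ is regular of dimension $d$, and $N$ is Cohen--Macaulay of dimension $d-1$; hence $\pd_R N = 1$ and $\syz^1_R N \cong gR \cong R$. Applying the long exact sequence of Koszul cohomology to $0 \to R \xrightarrow{\,g\,} R \to N \to 0$ and using $H^i(I;R) = 0$ for $i < d$ (as $R$ is Cohen--Macaulay and $I$ is generated by a regular sequence), one reads off the $4$-term exact sequence $0 \to H^{d-1}(I;N) \to R/I \xrightarrow{\,g\,} R/I \to N/IN \to 0$, so $\ell(H^{d-1}(I;N)) = \ell(N/IN) = \ell(R/(I+gR))$ for every parameter ideal $I$ of $R$. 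Thus it suffices to produce, in each case, a sequence of parameter ideals $I_n \subseteq m^n$ of $R$ with $\ell(R/(I_n+gR))/\ell(R/I_n)$ bounded away from $0$; in fact I will arrange this ratio to tend to $1$.

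First I would dispose of the case $d \geq 3$ by reducing to Theorem \ref{d>2} and Corollary \ref{d>2notprime}. Choose a variable $x$ distinct from the variable underlying $g$ (and, when $g$ is a single variable, also distinct from the complementary variable), so that $(g,x)R/gR \cong R/gR = N$ --- when $g = p^k$ this uses that $x$ and $p$ are non-associate primes of the regular ring $R$, so $(p^k,x)R/(p^k)R \cong R/(p^k)R$. After relabeling variables, $(g,x)R$ is precisely one of the two-generated ideals $M$ treated with $h = d-2$ in Theorem \ref{d>2} or Corollary \ref{d>2notprime}, hence is not $(d-1)$-effaceable. In the short exact sequence $0 \to gR \to (g,x)R \to N \to 0$, the submodule $gR \cong R$ has depth $d$ while $(g,x)R$ and $N$ both have depth $d-1$; so for any parameter ideal $I$ the long exact sequence of Koszul cohomology gives an injection $H^{d-1}(I;(g,x)R) \hookrightarrow H^{d-1}(I;N)$, and a sequence $I_n$ witnessing that $(g,x)R$ is not $(d-1)$-effaceable witnesses the same for $N$.

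Next I would handle the base case $d = 2$ by an explicit construction. When $g$ is a regular parameter of the two-dimensional regular local ring $R$ --- this covers $N = R/(x_2)$, $N = R/(x_1)$, and $N = R/(p)$ --- the ring $R/gR$ is a complete discrete valuation ring (or a field) with uniformizer $u$, the image of the other regular parameter. I would take $I_n = \bigl(u^{n-1}g,\; g^n - u^{n^3}\bigr)$; this lies in $m^n$ and is $m$-primary, hence (being $2$-generated) a parameter ideal. Then $R/(I_n+gR) = (R/gR)/(u^{n^3})$ has length $n^3$, while the relations $u^{n-1}g \equiv 0$ and $g^n \equiv u^{n^3}$ in $R/I_n$ reduce every monomial either to a power of $u$ (and $u^{n^3+n-1}\equiv 0$, so there are at most $n^3 + O(n)$ of these) or to one of the $O(n^2)$ monomials involving $g$ with $u$-degree $<n-1$ and $g$-degree $<n$, so $\ell(R/I_n) \leq n^3 + O(n^2)$; hence $\ell(H^1(I_n;N))/\ell(R/I_n) \to 1$, and $N$ is not $1$-effaceable. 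The remaining two-dimensional case, $N = V[[x_1]]/(p^k) \cong (V/(p^k))[[x_1]]$ with $k \geq 2$, I would reduce to the case $k=1$ (just treated, with $g = p$) by induction on $k$: the exact sequence $0 \to p^{k-1}N \to N \to (V/(p^{k-1}))[[x_1]] \to 0$, the isomorphism $p^{k-1}N \cong \kappa[[x_1]]$, and the vanishing $H^0\bigl(I;(V/(p^{k-1}))[[x_1]]\bigr) = 0$ (that module has depth $1$) give an injection $H^1(I;\kappa[[x_1]]) \hookrightarrow H^1(I;N)$, so a bad sequence for $\kappa[[x_1]]$ is bad for $N$.

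The main obstacle is the two-dimensional construction itself. The naive choices of $I_n$ --- such as $(g^n, u^A)$, or $(u^{n-1}g - u^A,\, g^n)$ --- produce a quotient $R/I_n$ that is essentially an $n \times A$ ``box'', so that $\ell(R/I_n) \approx n\cdot\ell(R/(I_n+gR))$ and the ratio is only $\Theta(1/n)$, which does \emph{not} suffice. The point of the generator $u^{n-1}g$ is to force $u^{n-1}g \equiv 0$, collapsing this box so that $R/I_n$ is ``thin away from the $u$-axis'' and its length is dominated by the length $n^3$ of $R/(I_n+gR)$. Making this precise --- verifying that the listed monomials really do span $R/I_n$ and that $u$ is nilpotent only of order $n^3+O(n)$ in $R/I_n$ --- is the computational heart of the argument, and the mixed-characteristic bookkeeping (powers of $p$ together with the Artinian coefficient ring $V/(p^k)$) adds a further layer of care.
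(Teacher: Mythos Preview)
Your argument is correct and follows the paper's overall strategy: both proofs begin with the $4$-term exact sequence arising from $0 \to R \xrightarrow{g} R \to N \to 0$, reducing the question to producing parameter ideals $I_n$ with $\ell(R/(I_n+gR))/\ell(R/I_n)$ bounded away from $0$. The details thereafter differ in two places.

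For $d \geq 3$, the paper is more direct: having already (in Proposition~\ref{basecase}) found $I_n$ with $\ell\bigl(R/(I_n+(a,x_1,\dots,x_{d-2}))\bigr)/\ell(R/I_n) \to 1$, it simply observes that $\ell\bigl(R/(I_n+(a))\bigr) \geq \ell\bigl(R/(I_n+(a,x_1,\dots,x_{d-2}))\bigr)$ because $(a) \subseteq (a,x_1,\dots,x_{d-2})$. Your route via the short exact sequence $0 \to gR \to (g,x)R \to N \to 0$ and Theorem~\ref{d>2}/Corollary~\ref{d>2notprime} also works, but is a detour compared to this one-line inequality.

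For $d=2$, your explicit family $I_n = (u^{n-1}g,\, g^n - u^{n^3})$ is genuinely different from the paper's $I_n = (x_1^{n+1}-x_1x_2^n,\, x_2^{n^3}+x_1^n)$; yours has a monomial first generator, which makes the spanning-set argument for $R/I_n$ slightly cleaner, while the paper's first generator factors as $x_1(x_1^n-x_2^n)$, allowing an exact length computation $\ell(R/I_n) = n^3+n^2$ via additivity along the two factors. Your separate reduction of the case $g = p^k$, $k>1$, to $k=1$ via $0 \to p^{k-1}N \to N \to (V/p^{k-1})[[x_1]] \to 0$ is the argument of Corollary~\ref{s>1}; the paper instead handles $p^k$ implicitly through the containment $(p^k) \subseteq (p)$ and the same length inequality as above. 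Both approaches are valid.
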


\begin{proof} Call the generator of the principle ideal by which we are quotienting $R$ by to obtain $N$ in each case $a$.  We note that the techniques of the previous lemmas do not work here because $aR \cong R$.  Using the short exact sequence $0 \rightarrow R \xrightarrow{a} R \rightarrow N \rightarrow 0$, we get the long exact sequence \[
0 \rightarrow H^{d-1}(I_n; N) \rightarrow R/I_n \rightarrow R/I_n \rightarrow N/I_nN \rightarrow 0
\] from which we see that $\ell(H^{d-1}(I_n; N)) = \ell(N/I_nN)$.  It is therefore sufficient to give a family of ideals $I_n$ such that $\displaystyle \lim_{n \to\infty} \dfrac{\ell(R/((a)+I_n))}{\ell(R/I_n)} \neq 0$.  We have previously discussed such families for $d \geq 3$ since \[
\ell(R/((a, x_1, \ldots, x_{d-2})+I_n)) \leq \ell(R/((a)+ I_n)).\]
  For $d = 2$, we may use the family $I_n = (x_1^{n+1}-x_1 x_2^n, x_2^{n^3}+x_1^n)$ with $x_2 = p$ when $R = V[[x_1]]$.  It is clear that $\ell(R/((x_1)+I_n)) = n^3$, and because neither $x_1$ nor $x_1^n-x_2^n$ is a zerodivisor on $R/(x_2^{n^3}+x_1^n)$, we may compute $\ell(R/I_n) = \ell(R/(x_1, x_2^{n^3}))+\ell(R/(x_1^n-x_2^n, x_2^{n^3}+x_1^n))$.  Notice that $x_1^2-x_2^n-( x_2^{n^3}+x_1^n) = x_2^n(-1-x_2^{n^3-n})$, and so $x_1^n, x_2^n \in I_n$.  Hence, \[
\ell(R/I_n) = \ell(R/(x_1, x_2^{n^3}))+\ell(R/(x_1^n-x_2^n, x_2^{n^3}+x_1^n)) = n^3+n^2.
\]  Of course, $\displaystyle \lim_{n \rightarrow \infty} \dfrac{n^3}{n^3+n^2} = 1\neq 0$.  
\end{proof}

We give the corresponding proposition over a Gorenstein ring of the desired type.

\begin{proposition} \label{d=2mixed} 
Let $R = V[[x_1, \ldots, x_{d}]]/(p^sx_1)$ where $V = (V, pV, \kappa)$ is a discrete valuation ring and $s \geq 1$.  Then neither $N = R/(p^k, x_d)$ for any $k \geq 1$ nor $N = R/(x_1, x_d)$ is $(d-1)$-effaceable for $d \geq 2$.
\end{proposition}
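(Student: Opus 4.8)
The plan is to follow the template of the proof of Proposition \ref{d=2}, adapting it to the two new features: $N$ is now the quotient of $R$ by a two-generated ideal rather than a principal one, and $R$ is merely Gorenstein. What makes Proposition \ref{d=2} work is that its $N$ is the quotient of $R$ by a nonzerodivisor; the substitute here is that each $N$ is the quotient of an \emph{intermediate} ring by a nonzerodivisor. Indeed, $N = R/(p^k,x_d) = T/(x_d)$ with $T = R/(p^k)$, and $N = R/(x_1,x_d) = T/(x_d)$ with $T = R/(x_1) \cong V[[x_2,\dots,x_d]]$; in the first case $x_d$ is a nonzerodivisor on $T$ because it is a variable not appearing in the defining relation $p^sx_1$, and in the second because $T$ is a domain.

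First I would extract a lower bound on Koszul cohomology from the short exact sequence $0 \to T \xrightarrow{x_d} T \to N \to 0$. Applying the long exact sequence of Koszul cohomology for a parameter ideal $I = (f_1,\dots,f_d)$ of $R$ and using $H^d(I;T) = T/IT$, one reads off that $H^{d-1}(I;N)$ surjects onto $\ker\!\big(x_d\colon T/IT \to T/IT\big)$, and a length count gives $\ell\big(\ker(x_d\colon T/IT \to T/IT)\big) = \ell\big(T/(x_dT + IT)\big) = \ell(N/IN)$. Hence $\ell(H^{d-1}(I;N)) \ge \ell(N/IN) = \ell(R/(J+I))$ for every parameter ideal $I$, where $J$ is the relevant two-generated ideal; when $T = V[[x_2,\dots,x_d]]$, which is Cohen--Macaulay of dimension $d$ so that $H^{d-1}(I;T)=0$, this is an equality, exactly as in Proposition \ref{d=2}. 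It therefore suffices to produce a sequence of parameter ideals $I_n \subseteq m^n$ of $R$ for which $\ell(R/(J+I_n))/\ell(R/I_n)$ does not tend to $0$.

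For $d \ge 3$ this is immediate from what has already been done in Section 6: one has $(p^k,x_d) \subseteq (p,x_2,\dots,x_d)$ and $(x_1,x_d)\subseteq (x_1,\dots,x_d)$, so for the corresponding ideal $M$ (one of the modules treated in Proposition \ref{basecasemixed}), $R/(M+I_n)$ is a quotient of $R/(J+I_n)$ and thus $\ell(R/(J+I_n)) \ge \ell(R/(M+I_n))$. The family $I_n \subseteq m^n$ built in the proof of Proposition \ref{basecasemixed} for that $M$ has $\ell(R/(M+I_n))/\ell(R/I_n)$ bounded below by a positive constant (in fact converging to something $\ge 1/3$), so the same family works here.

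For $d = 2$ one must, as in Proposition \ref{d=2}, exhibit an explicit family in $R = V[[x_1,x_2]]/(p^sx_1)$. I would take $I_n = (f_1,f_2)$ modeled on the family $(x_1^{n+1}-x_1x_2^n,\ x_2^{n^3}+x_1^n)$ from Proposition \ref{d=2}, but with the generators altered (in the spirit of Proposition \ref{basecasemixed}, where $x-p$ is substituted for $x$) so that, working modulo $I_n$, the relation $p^sx_1 = 0$ forces a power of $x_1$ into $I_n$ along with powers of $x_2$ and $p$, while the image of $I_n$ in $R/J$ has colength of order $n^3$ and $\ell(R/I_n) = O(n^3)$. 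I expect this last step — checking that the chosen $(f_1,f_2)$ is genuinely $m$-primary and has the claimed colengths, the bookkeeping being more delicate than in the regular case since $x_1$ is a zerodivisor — to be the main obstacle; everything preceding it is formal. (When $V$ is equicharacteristic there is a shortcut for $N = R/(x_1,x_2)$: choose a module-finite regular subring $A \subseteq R$ of dimension $2$ over which $N$ is the quotient of $A$ by a single member of a regular system of parameters, and transfer the colength inequality of Proposition \ref{d=2} through the finite extension $A \subseteq R$.)
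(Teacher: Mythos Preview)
Your strategy matches the paper's: reduce to bounding $\ell(N/I_nN)/\ell(R/I_n)$ away from zero via the short exact sequence $0 \to T \xrightarrow{x_d} T \to N \to 0$, invoke the height-$(d-1)$ families already built over $R$ in Proposition~\ref{basecasemixed} for $d\ge 3$, and construct an explicit two-generator family for $d=2$ using the $(p-x_1)$-substitution. Your reduction through $T$ is in fact cleaner than the paper's terse ``as in the proceeding proposition,'' and your $d\ge 3$ step (comparing $\ell(R/(J+I_n))$ with $\ell(R/(M+I_n))$ for $J\subseteq M$) is exactly how the paper handles it.

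The only gap is the one you flag, and the paper resolves it by a mechanism different from the one you sketch. It takes $I_n=\bigl(x_2^{n+1}-x_2(p-x_1)^n,\ (p-x_1)^{n^3}+x_2^n\bigr)$ and, after reducing $p^k$ to $p$ via $\ell(R/(p,x_2,I_n))\le\ell(R/(p^k,x_2,I_n))$, computes $\ell(R/I_n)$ not by chasing monomials in $R$ but by lifting to the regular cover $\tilde R=V[[x_1,x_2]]$. There the unique minimal prime of $I_n$ is $(x_2,\,p-x_1)$, so $p$ and $x_1$ are nonzerodivisors on the one-dimensional complete intersection $\tilde R/I_n$; hence
\[
\ell(R/I_n)=\ell\bigl(\tilde R/(I_n+(p^sx_1))\bigr)=\ell\bigl(\tilde R/(I_n+(p^s))\bigr)+\ell\bigl(\tilde R/(I_n+(x_1))\bigr)\le (s+1)(n^3+n^2),
\]
each summand collapsing, after killing $p$ or $x_1$, to the literal computation of Proposition~\ref{d=2}. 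Since $\ell(N/I_nN)=n^3$, the ratio is bounded below by $1/(s+1)$. So the role of the relation $p^sx_1=0$ is not to push powers of $x_1$ into $I_n$ directly, but to make $\ell(R/I_n)$ additive over the factors of $p^sx_1$ once the $I_n$ is chosen so that those factors avoid its unique minimal prime upstairs; this sidesteps the delicate bookkeeping you were anticipating.
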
 

\begin{proof}
As in the proceeding proposition, we let $a = p^k$ if $N = V[[x_1, \ldots, x_{d}]]/(p^k, x_d)$ and $a = x_1$ if $N = V[[x_1, \ldots, x_{d}]]/(x_1, x_d)$.  Again, we note that it is sufficient to find a sequence of parameter ideals $I_n \subseteq m^n$ such that $\displaystyle \lim_{n \to\infty} \dfrac{\ell(R/((a)+I_n))}{\ell(R/I_n)} \neq 0$ and that we need only consider $d=2$.  Because $\ell(R/((p)+I^n)) \leq \ell(R/((p^k)+I^n)) \leq k \cdot \ell(R/((p)+I^n))$ for each $k \geq 1$, it is sufficient to consider the case $k=1$.  

We will take $I_n = (x_2^{n+1}-x_2(p-x_1)^n, (p-x_1)^{n^3}+x_2^n)$.  It is clear that $\ell\left(\dfrac{V[[x_1, x_2]]}{I_n+(a, x_2)}\right)= n^3$.  Meanwhile, $\min(x_2^{n+1}-x_2(p-x_1)^n, (p-x_1)^{n^3}+y^n) = \{(x_2, p-x_1)\}$ for all $n \geq 1$ and so neither $p$ nor $x_1$ is a zerodivisor on $\dfrac{V[[x_1, x_2]]}{(x_2^{n+1}-x_2(p-x_1)^n, (p-x_1)^{n^3}+y^n)}$.  Therefore, \[
\ell\left(\dfrac{V[[x_1, x_2]]}{I_n+(p^sx_1)}\right)= \ell\left(\dfrac{V[[x_1, x_2]]}{I_n+(p^s)}\right)+\ell\left(\dfrac{V[[x_1, x_2]]}{I_n+(x_1)}\right) \leq s \cdot \ell\left(\dfrac{V[[x_1, x_2]]}{I_n+(p)}\right)+\ell\left(\dfrac{V[[x_1, x_2]]}{I_n+(x_1)}\right).
\]  The computation of $\ell\left(\dfrac{V[[x_1, x_2]]}{I_n+(p)}\right)$ and $\ell\left(\dfrac{V[[x_1, x_2]]}{I_n+(x_1)}\right)$ are symmetric and both equal to $n^3+n^2$, following the computation in Proposition \ref{d=2}.  Therefore, \[
\displaystyle \lim_{n \to\infty} \dfrac{\ell(R/((a, x_2)+I_n))}{\ell(R/I_n)} \geq \dfrac{n^3}{(s+1)(n^3+n^2)}  = \frac{1}{s+1} \neq 0,
\] completing the proof.

\end{proof}

%

\end{document}